\theoremstyle{plain}
\newtheorem{theorem}{Theorem}[section]
\newtheorem{lemma}[theorem]{Lemma}
\newtheorem{proposition}[theorem]{Proposition}
\newtheorem{corollary}[theorem]{Corollary}
\theoremstyle{definition}
\newtheorem{definition}[theorem]{Definition}
\newtheorem{remark}[theorem]{Remark}
\theoremstyle{remark}
\numberwithin{equation}{section}
\newcommand{\ran}{{\rm ran}}
\newcommand{\R}{{\mathbb R}}
\newcommand{\Om}{\Omega}
\newcommand{\J}{\mathcal{J}}
\newcommand{\F}{\mathcal{F}}
\newcommand{\N}{\mathbb N}
\newcommand{\EE}{{\mathcal E}}
\newcommand{\be}{\begin{equation}}
\newcommand{\ee}{\end{equation}}
\newcommand{\bes}{\begin{eqnarray}}
\newcommand{\ees}{\end{eqnarray}}
\title{Linearly Constrained Nonsmooth and Nonconvex Minimization}
\author{Marco Artina\thanks{Faculty of Mathematics, Technische Universit\"at M\"unchen, Boltzmannstrasse 3, 85748, Garching, Germany Email: {\tt marco.artina@ma.tum.de}} \and Massimo Fornasier\thanks{Faculty of Mathematics, Technische Universit\"at M\"unchen, Boltzmannstrasse 3, 85748, Garching, Germany Email: {\tt massimo.fornasier@ma.tum.de}} \and Francesco Solombrino\thanks{Faculty of Mathematics, Technische Universit\"at M\"unchen, Boltzmannstrasse 3, 85748, Garching, Germany Email: {\tt francesco.solombrino@ma.tum.de}}}
\begin{document}

\maketitle

\begin{abstract}
Motivated by variational models in continuum mechanics, we introduce a novel algorithm to perform nonsmooth and nonconvex minimizations with linear constraints in Euclidean spaces. We show how this algorithm is actually a natural generalization of the well-known non-stationary augmented Lagrangian method for convex optimization. The relevant features of this approach are its applicability to a large variety of nonsmooth and nonconvex objective functions, its guaranteed convergence to critical points of the objective energy independently of the choice of the initial value, and its simplicity of implementation. In fact, the algorithm results in a nested double loop iteration. In the inner loop an augmented Lagrangian algorithm performs an adaptive finite number of iterations on a fixed quadratic and strictly convex perturbation of the objective energy, depending on a parameter which is adapted by the external loop. To show the versatility of this new algorithm, we exemplify how it can be used for computing 
critical points in inverse free-discontinuity variational models, such as the Mumford-Shah functional, and, by doing so, we also derive and analyze new iterative thresholding algorithms. 
\end{abstract}

\noindent
{\bf AMS subject classification:} 49J52  
49M30, 
49M25, 
90C26, 
52A41, 
65J22, 
65K10, 
68U10, 
74S30  
\\

\noindent
{\bf Key Words:} variational models in continuum mechanics, linearly constrained nonconvex and nonsmooth optimization, free-discontinuity problems, iterative thresholding algorithms, convergence analysis.

\pagestyle{myheadings}
\thispagestyle{plain}
\markboth{M. ARTINA, M. FORNASIER, AND F. SOLOMBRINO}{LINEARLY CONSTRAINED NONSMOOTH AND NONCONVEX MINIMIZATION}

\section{Introduction}

Minimizers of integrals in calculus of variations may possess singularities, which typically arise as the result of the nonsmoothness or nonconvexity of the energy. For certain problems in continuum mechanics, such singularities represent physically interesting instabilities, like relevant features of solid phase transformations and certain modes of fracture (see e.g., \cite{frma98,mi02,yefrscXX}).
In this context, {\it local} minimizers of {\it nonconvex} energies play a pivotal role, as often evolution of physical phenomena proceeds along such energy critical points. Furthermore, usually the given problems have additional conditions, for instance boundary conditions, to be taken into account, which result in constraints, often of linear type, to be satisfied by the critical points. Therefore the appropriate solution of constrained genuinely {\it nonconvex} optimization problems is of the utmost interest as well as the accurate numerical treatment of the singularities which are expected to characterize the critical points.

In the literature one can find efficient algorithmic solutions for linearly constrained {\it convex} and {\it nonsmooth} minimization, e.g.,  augmented Lagrangian methods \cite{ber,poltret,Bre1,FrSch,ItoKun}, and for linearly constrained {\it nonconvex} minimization, such as sequentially quadratic programming (SQP) or (semi-smooth) Newton methods \cite{nowr06}. Unfortunately, in the latter cases only {\it smooth objective energies}, usually at least $C^2$ functionals can be addressed by algorithms, which are then guaranteed to converge {\it only locally} around the expected critical point. A more general setting is the one  considered in \cite{ABS}, where a remarkable analysis of the convergence properties of descent methods for nonconvex optimization, also with constraints, has been carried out. A key role in the paper is played by a special condition, the so-called Kurdyka-{\L}ojasiewicz inequality (see for instance \cite{Bdl}) allowing for a general convergence result in a nonsmooth nonconvex 
setting, but 
again under 
the 
assumption of a {\it good initial guess}. To remove the latter very restrictive assumption, a certain smoothness is needed, namely, {\it global} $C^{1,1}$ regularity. Let us also stress that, although quite a mild condition from the point of view of the applications, the Kurdyka-{\L}ojasiewicz inequality could {\it not} be verified even in the case of convex functions, as shown again in \cite{Bdl}.

The above mentioned limitations of the currently available literature lead us to the motivation of this paper. Its first goal is to propose a {\it very general} and {\it simple} iterative algorithm to solve nonsmooth and nonconvex optimization problems with linear constraints. For nonsmoothness we mean that we require our objective function to be in general only a {\it locally} Lipschitz function, contrary to the much more restrictive $C^2$, or $C^{1,1}$ regularity requested by most of the above mentioned known methods for providing convergence guarantees, as in \cite{ABS}. 
Moreover, as one of the most relevant features of our iteration, we will show its {\it unconditionally guaranteed} convergence. By this we mean that the initial state does not need to be in a small neighborhood of a critical point. Our algorithm may in fact be viewed as an appropriate combination of the above mentioned techniques, resulting in a nested double loop iteration, where in the inner loop an augmented Lagrangian algorithm, or Bregman iteration, performs an adaptive finite number of iterations on a fixed {\it local} quadratic perturbation of the objective energy around the previous iteration, while the external loop performs an adaptation of the quadratic perturbation, similarly to SQP. Our analysis of convergence is confined to the setting of finite dimensional Euclidean spaces. Nevertheless, most of it could be done in the more general framework of (possibly 
infinite dimensional) Hilbert spaces since the only point where finite dimensionality is actually needed, is to recover strong compactness in the proof of Theorem \ref{mainthm}. 
\\
In the second part of this paper, we show the versatility of this algorithm by discussing some relevant applications. Our attention goes in particular to nonsmooth and nonconvex functionals of the type
\begin{equation}\label{examples} 
\J(v) = \| T v - g \|^2 + \gamma \sum_{k=1}^m  U_k(v_k),
\end{equation} 
subject to a linear constraint $A v = f$. Here $\gamma >0$ is a positive regularization parameter, $g$ is a datum, $T$ is a linear functional, and $U_k:\mathbb R \to \mathbb R_+$, for $k=1,\dots,m$, are scalar nonconvex maps acting on the components $(v_k)_{k=1}^m$ of the vector $v$ with respect to a fixed basis in a Euclidean space of dimension $m$. Among the models that after discretization fit the general optimization problem \eqref{examples} we present the Mumford-Shah functional in image processing, and the energy functionals driving the evolution of elastic bodies in well-established models of cohesive \cite{ca08,cato11,daza07} and brittle fracture \cite{bo07,dato02,dato02-1,frma98}.  This list is far from being complete, as we actually expect that the algorithm we study in this work can have significant further numerical applications also in other problems involving nonsmooth and nonconvex energies with additional linear (boundary) conditions, like elasto-plastic evolutions \cite{daro10,
mi02}, and atomic structure computations \cite{yefrscXX}. 

With the scope of clarifying in detail the applicability of our algorithm,  we focus on problems of the type \eqref{specfunc} for $U_k = W_r^p$, where $W_r^p(t) = \min \{|t|^p, r^p\}$, for $r>0$, $p\geq 1$, and any $t \in \mathbb R$. The choice of analyzing the case of this truncated polynomial potential is motivated by its particular relevance after appropriate discretization for several applications, e.g., in image processing, quasi-static evolutions of brittle fractures, compressed sensing, etc. We shall furnish more details on such a modeling in Section \ref{applications}. Furthermore, as a guideline to users,  the analysis of the related minimization problem gives us the possibility of discussing in detail  the role of the coercivity of the objective functionals as well as of the main conditions (A1) and (A2) appearing in our abstract analysis of convergence as presented in Section \ref{algorithm}. In particular, we show in this concrete situation, which we consider as a relevant template for several 
other cases as mentioned above, how such assumptions can in fact be properly fulfilled.
In the context of a truncated polynomial potential, our analysis requires a smooth perturbation technique which is reminiscent of previous methods of continuation-based deterministic relaxation, such as the {\it graduated nonconvexity} (GNC) pioneered by Blake and Zisserman \cite{BZ} in the context of the Mumford-Shah model, see also recent developments in \cite{niidmo98,nikolova99, niko2, niko3} and references therein. Being of some conceptual relevance for the scope of this paper, we mention how this latter technique works. For a suitable parameter $\varepsilon \in [0,1]$, one considers a continuous family of smoother objectives $\mathcal J^\varepsilon$ such that $\lim_{\varepsilon \to 1} \mathcal J^\varepsilon =  \mathcal J$ (at least pointwise), where $\mathcal J$ is the nonconvex energy to be minimized. Then one addresses the 
global minimization 
of $\mathcal J$ by iterated {\it local} minimizations along  $\mathcal J^\varepsilon$ when $\varepsilon$ is increasing from $0$ to $1$ with a strictly convex initial $\mathcal J^0$. More formally, we consider an increasing sequence $(\varepsilon_n)_{n \in \mathbb N}$, with $\varepsilon_0 =0$ and $\lim_n \varepsilon_n =1$ and the iterative algorithm
\begin{equation}\label{detcont}
v^{n+1} = \arg \min_{v \in \mathcal N_{\varepsilon_n} (v^n)} \mathcal J^{\varepsilon_n} (v),
\end{equation}
where $\mathcal N_{\varepsilon_n} (v^n)$ is a suitable neighborhood of the previous iteration $v^n$ of size possibly depending on $\varepsilon_n$. While such semi-heuristic algorithms perform very well in practice, usually they do not provide eventually any guarantee for global convergence and their applicability highly depends on the appropriate design of the approximating family $\{ \mathcal J^\varepsilon: \varepsilon \in [0,1]\}$, depending on the particular application and form of $\mathcal J$. Our algorithm has instead {\it more general applicability} and {\it stronger convergence guarantees}, providing as a byproduct also some rigorous justification to those semi-heuristic methods. \\
Another interesting feature of the application of the proposed algorithm to linearly constrained nonsmooth and nonconvex minimization involving truncated polynomial energy terms, is that the {\it inner loop} can be realized by means of an {\it iterative thresholding algorithm}. This technique has been as first proposed in \cite{fowa10} to solve inverse free-discontinuity problems in one dimension, where no approximate smoothing of the energy was used, contrary to other previous approaches, e.g., based on graduated nonconvexity \cite{BZ,niidmo98,nikolova99}.  The extension we provide in this paper  allows us now to similarly address problems, which are defined in any dimension, thanks to the appropriate handling of corresponding linear 
constraints and {\it very mild} smoothing.\\
{\it Thresholding algorithms} have by now a long history of successes, based on their extremely simple implementation, their statistical properties, and, in the iterative case, strong convergence guarantees. We retrace briefly some of the relevant developments, without the intention of providing an exhaustive mention of the many contributions in this area.
The terminology ``thresholding'' comes from image and signal processing literature, especially related to damping of wavelet coefficients in denoising problems, however the associated mathematical concept is the {\it Moreau proximity map} \cite{cowa05}, well-known from convex optimization. The statistical theory of thresholding has been pioneered by Dohono and Johnstone \cite{dojo94} in signal and image denoising and further and extensively explored in other work, e.g., \cite{chdelelu98}. {\it Iterative soft-thresholding algorithms} to numerically solve the minimization of convex energies, modelling inverse problems and formed by quadratic fidelity terms and $\ell_p$-norm penalties, for $p \geq 1$, have been first proposed in \cite{fino01}. Their strong convergence has been proven in the seminal work of Daubechies, Defrise, and De Mol \cite{DDD}. The recent theory of {\it compressed sensing}, i.e., the universal and nonadaptive compressed acquisition of data \cite{carota06,do06}, stimulated also the research 
of iterative thresholding algorithms for nonconvex penalty terms, such as the $\ell_p$-
quasi-norms for $0<p<1$. Variational and convergence properties of  {\it iterative firm-thresholding} algorithms, in particular the {\it iterative hard-thresholding}, have been recently studied in \cite{blda09,fora07}. Partially inspired by these latter achievements and the work of Nikolova \cite{nikolova} on the relationships between certain thresholding operators and discrete Mumford-Shah functionals, the results in \cite{fowa10} and in the present paper should be also considered as a contribution to the theory of thresholding algorithms in the new context of linearly constrained nonsmooth and nonconvex optimization.
\\

The paper is organized as follows. In Section \ref{algorithm} we define an appropriate concept of constrained critical points for certain classes of nonconvex functionals. We introduce then our new algorithm for the solution of nonsmooth and nonconvex minimization with linear constraints and we prove its convergence to critical points. Section \ref{applications} is addressed to the application of the general algorithm to the linearly constrained optimization problems of the type \eqref{examples}. In Section \ref{innerthr} we show how the core of the algorithm for free-discontinuity problems can actually be realized as a novel iterative soft-type thresholding algorithm. Section 4 is dedicated to numerical experiments, which demonstrate and confirm the theoretical findings, and, in particular, show how to tune the parameters of the algorithm. 
For ease of reading, we collect some of the technical results in a concluding Appendix.

\section{Linearly Constrained Nonsmooth and Nonconvex Minimization}\label{algorithm}
\subsection {Preliminaries and assumptions}
Let $\EE$ be an Euclidean space (that is a {\it finite dimensional} real Hilbert space) and $\J \colon \EE \to \R$ a lower semicontinuous functional which we assume to be bounded from below. Since we will be concerned with the search of critical points, without any loss of generality we shall suppose from now on that $\J(v) \ge 0$, for all $v \in \EE$. Let $\EE_1$ be another Euclidean space and we further consider a linear operator $A \colon \EE \to \EE_1$. Both the spaces $\EE$ and $\EE_1$ are endowed with an Euclidean norm, which we will denote in both cases by $\|\cdot\|$, since it will be always clear from the context in which space we are taking the norm. Dealing with finite dimensional spaces, it remains understood that the only notion of convergence that we will use is the strong convergence in norm, since weak and strong topologies are in this case equivalent. Let us also point out that most part of the analysis that we will carry out in the paper could be done in the more general framework of (
possibly 
infinite dimensional) Hilbert spaces; namely, the only point where finite dimensionality is actually needed, is to recover strong compactness in the proof of Theorem \ref{mainthm}.

We will deal in general with a nonconvex objective functional $\J$. When in the paper, as for instance in Section  \ref{malf} we will assume convexity of the objective function, it will be denoted by $\tilde \J$ to avoid any kind of confusion.
About the operator $A$, we  shall assume that $A$ has {\it nontrivial kernel}, and is {\it surjective}.  We shall denote by $A^*\colon \EE_1 \to \EE$ the adjoint operator of $A$. By our assumptions, for every $w \in \EE_1$ we have that there exists $\delta >0$ such that
\begin{equation}\label{inj}
\|A^*w\| \ge \delta \|w\|\,. 
\end{equation}

We consider $f \in \EE_1$ and we are concerned with the problem of finding constrained critical points of $\J$ on the affine space $\F(f):=\{v \in \EE: Av=f\}$. As usual in nonsmooth analysis, the notion of critical point is defined via the use of subdifferentiation.

\begin{definition}
Let $\EE$ be an Euclidean space, $\J \colon \EE \to \R$ a lower semicontinuous functional, and $v\in \EE$. We say that $\xi \in \EE' \simeq\EE$ belongs to the subdifferential $\partial \J(v)$ of $\J$ at $v$ if and only if\begin{equation}\label{1234}
 \liminf_{w\to v} \frac{\J(w)-(\J(v)+\langle \xi, w-v\rangle)}{\|w-v\|} \ge 0\,.
\end{equation}
\end{definition}

The subdifferential is single-valued precisely at (Fr\'echet) differentiability points, where it coincides with the differential, but can be in general multivalued, or even empty. It is well-known (see, for instance \cite[Chapter 1]{AGS}) that it is a closed convex set. In the special case of a convex functional $\tilde \J$, it is nonempty at every point and it can be shown (see again \cite[Proposition 1.4.4]{AGS}) that the definition of subdifferential given in \eqref{1234} and the one which is classical in convex nonsmooth analysis coincide, that is
\begin{equation}\label{12345}
 \xi \in \partial \tilde \J(v) \mbox{ if and only if } \tilde \J(w)-(\tilde \J(v)+\langle \xi, w-v\rangle) \ge 0
\end{equation}
for every $w\in \EE$. The symbol $\partial$ will be therefore used both in the convex and in the nonconvex case, since no ambiguity is possible.
In the case of a $C^1$ perturbation of a lower semincontinuous functional, that is $\J=\J_1+ \J_2$ where $\J_1$ is lower semicontinuous, and $\J_2$ is of class $C^1$, it follows from the definition that if $\partial \J_1(v)$ is nonempty, then $\partial \J(v) \neq \emptyset$ and the decomposition
\begin{equation}\label{2469}
\partial \J(v)= \partial \J_1(v)+ D\J_2(v)\,,
\end{equation}
holds true. Here $D$ denotes the Fr\'echet differential of $\J_2$ at $v$. In particular, $C^1$-perturbations of lower semicontinuous convex functionals have nonempty subdifferential at every point. We collect in the following Remark some useful properties of the subdifferential that will be employed in the sequel.

\begin{remark}\label{rem1}
If $\J$ is a $C^1$-perturbation of a convex function, one proves that the subdifferential enjoys the following closure property:
\begin{equation}\label{6480}
\xi_n \in \partial \J(v_n),\quad v_n \to v,\quad \xi_n \to \xi \mbox{ implies } \xi \in \partial \J(v) \hbox { and } \J(v_n) \to \J(v)\,.
\end{equation}

The subdifferential of a convex function $\tilde \J$ is known to be a monotone operator \cite{ET}, that is, for every $v$ and $w \in \EE$
\begin{equation}\label{monoton}
\xi \in \partial \tilde \J(v)\hbox{ and }\omega \in \partial \tilde \J(w)  \mbox{ implies } \langle \xi-\omega, v-w\rangle \ge 0\,.
\end{equation}
\end{remark}

We shall say that a function is {\it $\nu$-strongly convex} if a stronger form of \eqref{monoton} holds, that is there exists $\nu >0$ such that
\begin{equation}\label{monoton2}
\xi \in \partial \tilde \J(v)\hbox{ and }\omega \in \partial \tilde \J(w)  \mbox{ implies } \langle \xi-\omega, v-w\rangle \ge \nu\|v-w\|^2\,.
\end{equation}
It is well-known that this is equivalent to saying that $\tilde \J(\cdot)-\frac \nu2 \|\cdot\|^2$ is convex.

We are now ready to recall the definition of critical point. 

\begin{definition}
Let $\EE$ be an Euclidean space, $\J \colon \EE \to \R$ a lower semicontinuous functional, and $v\in \EE$. We say that $v$ is a critical point of $\J$ if 
$$
0 \in \partial \J(v)\,.
$$
\end{definition}

In the convex case this condition is sufficient to assure global minimality of $v$, otherwise it is only a necessary condition for local minimality.

In the following definition of constrained critical point the usual shorthand $\J(w+~\cdot)$ is used to denote the functional $\xi \mapsto \J(w+\xi)$.
\begin{definition}
Given a linear operator $A \colon \EE \to \EE_1$ with nontrivial kernel, and $f \in \EE_1$, we say that $w$ is a critical point of $\J$ on the affine space $\F(f)=\{v \in \EE: Av=f\}$ if $Aw=f$ and $0$ is a critical point for the restriction to $\ker A$ of the functional $\J(w+\cdot)$.\end{definition}

For $\J$ being a $C^1$-perturbation of a convex function (in particular, with nonempty subdifferential at every point), the nonsmooth version of Lagrange multiplier Theorem assures that $w$ is a critical point of $\J$ on the affine space $\{v \in \EE: Av=f\}$ if and only if $Aw=f$ and
\begin{equation}\label{6529}
\partial \J(w) \cap \ran(A^*)\neq \emptyset\,,
\end{equation}
where $\ran(A^*)$ is the range of the operator $A^*$, which is known to be the orthogonal complement of $\ker A$ in $\EE$.

From now, about the function $\J$, we will make the following more specific assumptions:
\begin{itemize}
 \item [(A1)] $\J$ is $\omega$-semi-convex, that is there exists $\omega >0$ such that $\J(\cdot)+\omega \|\cdot\|^2$ is convex;
 \item [(A2)] the subdifferential of $\J$ satisfies the following growth condition: there exist two nonnegative numbers $K$, $L$ such that, for every $v \in \EE$ and $\xi \in \partial \J(v)$
 \begin{equation}\label{strana}
  \|\xi\| \le K \J(v)+ L\,.
 \end{equation}
\end{itemize}

\begin{remark}\label{note}
(a) We observe that condition (A1) is in fact met, for instance, by any $C^1$ function in finite dimension with piecewise continuous and bounded second derivatives.   However, let us stress that, conversely, $\omega$-semi-convexity does not give any information on the smoothness of the function, other than local Lispchitzianity, hence, in finite dimension, its Fr\'echet-differentiability almost everywhere, by Rademacher's Theorem.
We also recall that an $\omega$-semi-convex function is a $C^1$-perturbation of a convex function, therefore it has nonempty (and locally bounded) subdifferential at every point.  If the subdifferential is uniformly bounded, then \eqref{strana} is trivially satisfied.

(b) As just recalled,  an $\omega$-semi-convex function in finite dimension has a Fr\'echet differential almost everywhere, and, if \eqref{strana} is satisfied only at points of differentiability, then it holds everywhere. This is true since it can be shown that the Fr\'echet subdifferential is contained in the so-called Clarke subdifferential, which is known to be at every $v \in \EE$ the convex hull of limit points of differentials of $\J$ along sequences $v_n \to v$ (for these notions, see for instance \cite[Chapter 2]{Cla}).
Therefore one needs not to calculate the subdifferential of $\J$ at non-differentiability points (which is in general quite a hard task) to check if the hypothesis is satisfied everywhere.
\end{remark}

Given $\omega >0$, and $u \in \EE$ we will denote 
\begin{equation}\label{1925}
\J_{\omega,u}(v):=\J(v)+\omega\|v-u\|^2
\end{equation}
Notice that $\J_{\omega,u}$ is coercive whenever $\J$ is bounded from below. We observe that, if $\J$ satisfies (A1) we can always assume that $\omega$ is chosen in such a way that $\J_{\omega,u}$ is also $\nu$-strongly convex with $\nu$ depending on $\J$ and $\omega$, but not on $u$. Analogously, if (A1) and (A2) are satisfied, by using \eqref{2469} it is easy to see that $\J_{\omega,u}$ satisfies \eqref{strana} with two constants $\tilde K, \tilde L$ depending again on $\J$ and $\omega$, but not on $u$.

\subsection{The augmented Lagrangian algorithm in the convex case}\label{malf}
We now recall some basic facts about augmented Lagrangian iterations for constrained minimization of convex functionals. 
Here, we are given a {\it coercive convex functional} $\tilde \J$ and, given two arbitrary $v_0 \in \EE$, $q_0 \in \EE_1$ , for every $k \in \N$, $k \ge 1$, we define:
\begin{equation}\label{bregman}
\begin{cases}
v_k \in \hbox{arg min}_{v \in \EE} (\tilde \J(v)-\langle q_k, Av \rangle + \lambda \|Av-f\|^2) \,.\\
q_k = q_{k-1}+ 2\lambda (f-Av_k)\,.
\end{cases}
\end{equation}

Convergence of the algorithm has been proved in \cite{Bre1}, where it was called {\it Bregman iteration}, and, since it is equivalent to the {\it Augmented Lagrangian Method} \cite{ItoKun}, also in \cite{FrSch}.
Precisely it has been shown that $\|Av_k-f\|$ decreases to $0$ as $k$ tends to $+\infty$, that the sequence $v_k$ is compact and any limit point is a global minimum of $\tilde \J$ under the constraint $Av=f$.
Moreover, for every $k\ge 1$, $A^*q_k \in \partial \J(v_k)$.
When $\tilde \J$ is $\nu$-strongly convex for some $\nu >0$ we have also a quantitative estimate of the convergence of $v_k$ to the unique (due to strict convexity) minimizer of the problem. We give a precise statement and a proof of this additional property, as it will be very useful later in the nonconvex case as well. 

\begin{proposition}\label{stima}
Assume that $\tilde \J$ is $\nu$-strongly convex, let $v_k$ and $q_k$ the sequences generated by \eqref{bregman}, and let $\bar v$ the unique global minimizer of $\tilde \J$ on the affine space $\{v \in \EE: Av=f\}$. Then:
\begin{itemize}
 \item [(i)] $(\|Av_k-f\|)_{k \in \mathbb N}$ is a decreasing sequence;
 \item [(ii)] $\lim_{k \to +\infty}\|Av_k-f\|=0$;
\item [(iii)] $\|v_k-\bar v\|^2 \le \frac1\nu\|q_0- \bar q\|\,\|Av_k-f\|$, for all $k \in \mathbb N$,
\end{itemize}
for every $\bar q \in \EE_1$ such that $A^*\bar q \in \partial \tilde \J(\bar u)$.
\end{proposition}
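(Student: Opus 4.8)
The plan is to ground everything in the two optimality relations $A^*q_k\in\partial\tilde\J(v_k)$ (the minimality condition for the inner step of \eqref{bregman}, already recorded above) and $A^*\bar q\in\partial\tilde\J(\bar v)$ (the nonsmooth Lagrange multiplier characterization \eqref{6529} of the constrained minimizer $\bar v$), and to exploit them through the monotonicity \eqref{monoton} and strong monotonicity \eqref{monoton2} of $\partial\tilde\J$. Throughout I write $r_k:=Av_k-f$ and recall from the multiplier update in \eqref{bregman} that $q_k-q_{k-1}=-2\lambda r_k$, so that $q_{k-1}-\bar q=(q_k-\bar q)+2\lambda r_k$.

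For (iii), which carries the genuinely new content, I would first establish the sharper estimate $\|v_k-\bar v\|^2\le\frac1\nu\|q_k-\bar q\|\,\|r_k\|$. Applying $\nu$-strong convexity \eqref{monoton2} to the pair $(v_k,\bar v)$ with the subgradients $A^*q_k$ and $A^*\bar q$ gives $\langle A^*(q_k-\bar q),v_k-\bar v\rangle\ge\nu\|v_k-\bar v\|^2$; transposing $A^*$ onto the second slot and using $A\bar v=f$ rewrites the left-hand side as $\langle q_k-\bar q,\,Av_k-f\rangle=\langle q_k-\bar q,r_k\rangle$, so that
\[
\langle q_k-\bar q,r_k\rangle\ \ge\ \nu\|v_k-\bar v\|^2\ \ge\ 0,
\]
and the Cauchy--Schwarz inequality yields the claimed bound with $\|q_k-\bar q\|$ in place of $\|q_0-\bar q\|$.

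The step I expect to be the main obstacle is upgrading $\|q_k-\bar q\|$ to $\|q_0-\bar q\|$, i.e.\ showing that $(\|q_k-\bar q\|)_k$ is nonincreasing. Here I would expand $\|q_{k-1}-\bar q\|^2=\|(q_k-\bar q)+2\lambda r_k\|^2$ and notice that the cross term $4\lambda\langle q_k-\bar q,r_k\rangle$ is nonnegative precisely because of the displayed inequality just derived. This produces the key telescoping bound
\[
\|q_{k-1}-\bar q\|^2\ \ge\ \|q_k-\bar q\|^2+4\lambda^2\|r_k\|^2,
\]
from which $\|q_k-\bar q\|\le\|q_0-\bar q\|$ follows at once, completing (iii). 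The delicate point is getting the correct sign of the cross term: it hinges on using the \emph{updated} multiplier $q_k$ in the strong-convexity step, which is exactly the one for which $A^*q_k$ is a subgradient at $v_k$.

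Finally, summing the same telescoping inequality over $k$ gives $4\lambda^2\sum_k\|r_k\|^2\le\|q_0-\bar q\|^2<\infty$, hence $\|r_k\|\to0$, which is (ii). For the monotonicity (i), I would instead compare consecutive iterates: monotonicity \eqref{monoton} applied to $(v_{k+1},v_k)$ with subgradients $A^*q_{k+1}$ and $A^*q_k$ gives $\langle q_{k+1}-q_k,\,r_{k+1}-r_k\rangle\ge0$, and substituting $q_{k+1}-q_k=-2\lambda r_{k+1}$ yields $\langle r_{k+1},r_{k+1}-r_k\rangle\le0$, whence $\|r_{k+1}\|^2\le\langle r_{k+1},r_k\rangle\le\|r_{k+1}\|\,\|r_k\|$ and $\|r_{k+1}\|\le\|r_k\|$ by Cauchy--Schwarz. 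Since (i) and (ii) are essentially the classical facts recalled before the statement, the write-up would place the emphasis on the quantitative estimate (iii), which is what gets reused in the nonconvex analysis.
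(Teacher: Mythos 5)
Your proof of part (iii) is correct and follows essentially the same route as the paper: strong monotonicity of $\partial\tilde\J$ applied to the pair $(v_k,\bar v)$ with subgradients $A^*q_k$ and $A^*\bar q$ gives $\nu\|v_k-\bar v\|^2\le\langle q_k-\bar q,\,Av_k-f\rangle$, and the upgrade from $\|q_k-\bar q\|$ to $\|q_0-\bar q\|$ is obtained, exactly as in the paper, by expanding $\|q_{k-1}-\bar q\|^2$ and observing that the cross term has the right sign by monotonicity. The only substantive difference is that the paper simply cites \cite{Bre1} for parts (i) and (ii), whereas you derive them: (ii) falls out of your telescoping inequality as the summability $4\lambda^2\sum_k\|Av_k-f\|^2\le\|q_0-\bar q\|^2$, which is in fact a quantitative strengthening of the mere convergence to zero, and (i) follows from the standard monotonicity argument on consecutive iterates. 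Both of these supplementary arguments are correct, so your write-up is self-contained where the paper is not; there is nothing to fix.
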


\begin{proof}
Properties (i) and (ii) are proved in \cite{Bre1}. For the property (iii), we first observe that such a $\bar q$ surely exists by \eqref{6529}. We define for all $k \ge 1$ the discrepancy $\Delta q_k:= q_k-\bar q$, and we prove that $\|\Delta q_k\|$ is decreasing. We actually have, by elementary computations and using \eqref{bregman}, that
\begin{eqnarray}\label{zuip}
&\displaystyle
\|\Delta q_k\|^2-\|\Delta q_{k-1}\|^2 \le 2\langle q_k-q_{k-1}, q_k-\bar q\rangle = \\
&\displaystyle
4\lambda\langle f-Av_k, q_k-\bar q\rangle=
4\lambda\langle \bar v-v_k, A^*q_k-A^*\bar q\rangle\,.
\end{eqnarray}
Since $A^*q_k \in \partial \tilde \J(v_k)$ and $A^*\bar q \in \partial \tilde \J(\bar u)$, the last term in the inequality is nonpositive by \eqref{monoton}, therefore the claim follows. In particular
\begin{equation}\label{monb}
\|q_k- \bar q\| \le \|q_0- \bar q\|\,,
\end{equation}
for all $k \geq 1$.
Now, by \eqref{monoton2}, we have also
$$
\nu \|v_k-\bar v\|^2 \le  \langle A^*q_k-A^*\bar q, v_k- \bar v\rangle= \langle q_k-\bar q, Av_k- f\rangle\,,
$$
so that we conclude by the Cauchy-Schwarz inequality and \eqref{monb}.
\end{proof}

When $\tilde \J$ is the function $\J_{\omega,u}$ defined by \eqref{1925}, with an appropriate choice of $\omega$, by the previous result, \eqref{inj}, and \eqref{strana}, we get the following corollary, whose rather immediate proof is therefore omitted.
\begin{corollary}\label{caz}
Consider the function $\J_{\omega,u}$ defined by \eqref{1925}, where $\omega$ is chosen in such a way that $\J_{\omega,u}$ is $\nu$-strongly convex with $\nu$ not depending on $u$.
Let $\bar v_u$ be the unique global minimizer of $\J_{\omega,u}$ on the affine space $\{v \in \EE: Av=f\}$. Then there exist two positive constants $C_1$ and $C_2$ depending on $A^*$, $\J$, and $\omega$, but not on $u$, such that
\begin{equation}\label{1785}
\|v_{k,u}-\bar v_u\|^2 \le \left [ C_1 (1+ \|q_0\|)+ C_2 \J_{\omega,u}(\bar v_u) \right ]\,\|Av_{k,u}-f\|\,,
\end{equation}
where $v_{k,u}:=v_k$ is defined accordingly to \eqref{bregman} for $\tilde \J=\J_{\omega,u}$.
\end{corollary}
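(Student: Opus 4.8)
The plan is to read the estimate off directly from Proposition \ref{stima}(iii), applied to the strongly convex function $\tilde \J = \J_{\omega,u}$, and then to bound the factor $\|q_0 - \bar q\|$ appearing there uniformly in $u$ by means of \eqref{inj} and the growth condition \eqref{strana}. Since $\omega$ is chosen so that $\J_{\omega,u}$ is $\nu$-strongly convex with $\nu$ independent of $u$, part (iii) of the Proposition gives
\[
\|v_{k,u} - \bar v_u\|^2 \le \frac{1}{\nu}\,\|q_0 - \bar q\|\,\|Av_{k,u} - f\|
\]
for any $\bar q$ with $A^*\bar q \in \partial \J_{\omega,u}(\bar v_u)$; such a $\bar q$ exists by \eqref{6529}, because $\bar v_u$ is in particular a constrained critical point of $\J_{\omega,u}$. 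It therefore remains only to produce a bound on $\|q_0 - \bar q\|$ of the shape $C_1'(1+\|q_0\|) + C_2'\,\J_{\omega,u}(\bar v_u)$ with constants independent of $u$.

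First I would split $\|q_0 - \bar q\| \le \|q_0\| + \|\bar q\|$ and estimate $\|\bar q\|$. The surjectivity inequality \eqref{inj} yields $\|\bar q\| \le \delta^{-1}\|A^*\bar q\|$, so it suffices to control $\|A^*\bar q\|$. Because $A^*\bar q \in \partial \J_{\omega,u}(\bar v_u)$ and, as noted after \eqref{1925}, $\J_{\omega,u}$ satisfies \eqref{strana} with constants $\tilde K$, $\tilde L$ depending only on $\J$ and $\omega$, one gets
\[
\|A^*\bar q\| \le \tilde K\,\J_{\omega,u}(\bar v_u) + \tilde L .
\]
Combining these estimates gives $\|q_0 - \bar q\| \le \|q_0\| + \delta^{-1}\tilde L + \delta^{-1}\tilde K\,\J_{\omega,u}(\bar v_u)$, which, after absorbing the additive constant into a multiple of $(1+\|q_0\|)$, has exactly the required form with $C_1' = \max\{1,\delta^{-1}\tilde L\}$ and $C_2' = \delta^{-1}\tilde K$. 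Substituting back into the Proposition and setting $C_1 := \nu^{-1}\max\{1,\delta^{-1}\tilde L\}$, $C_2 := \nu^{-1}\delta^{-1}\tilde K$ produces \eqref{1785}, and each of $\delta$ (from $A^*$) and $\tilde K$, $\tilde L$, $\nu$ (from $\J$ and $\omega$) is independent of $u$, as claimed.

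There is no genuine obstacle here, the statement being a quantitative bookkeeping of Proposition \ref{stima}. The single point that deserves care — and the very reason the constants can be asserted to be independent of $u$ — is precisely this $u$-uniformity: one must invoke that both the strong-convexity modulus $\nu$ and the growth constants $\tilde K$, $\tilde L$ of $\J_{\omega,u}$ may be taken independent of the base point $u$, which is exactly the content of the remarks following \eqref{1925}. The remaining steps are only the triangle inequality together with \eqref{inj} and \eqref{strana}, so that the proof is indeed ``rather immediate'' and may be omitted.
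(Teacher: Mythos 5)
Your proof is correct and follows exactly the route the paper intends: the authors omit the proof but state that the corollary follows from Proposition \ref{stima}, \eqref{inj}, and \eqref{strana}, which are precisely the three ingredients you combine, with the correct attention to the $u$-uniformity of $\nu$, $\tilde K$, and $\tilde L$ noted after \eqref{1925}. Nothing is missing.
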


\subsection{The algorithm in the nonconvex case}
We now present the new algorithm for linearly constrained nonsmooth and nonconvex minimization, and discuss its convergence properties.
We pick initial $v_{(0,0)} \in \EE$ and $q_{(0,0)} \in \EE_1$. Notice that there is no restriction to any specific neighborhood for the choice of the initial iteration. For a fixed scaling parameter $\lambda >0$, and an {\it adaptively chosen} sequence of integers $(L_\ell)_{\ell \in \mathbb N}$, for every integer $\ell \ge 1$ we set (with the convention $L_0=0$):
\begin{equation}\label{ncbregman}
\begin{cases}
v_{(\ell,0)}=v_{\ell-1}:=v_{(\ell-1,L_{\ell-1})}\quad q_{(\ell,0)}=q_{\ell-1}:=q_{(\ell-1,L_{\ell-1})}\\
v_{(\ell,k)} = \hbox{arg min}_{v \in \EE} \,\big( \J_{\omega, v_{\ell-1}}(v)-\langle q_{(\ell,k-1)}, Av \rangle + \lambda \|Av-f\|^2\big)\,,\quad k=1,\dots,L_\ell \\
q_{(\ell,k)} = q_{(\ell,k-1)}+ 2\lambda (f-Av_{(\ell,k)})\,.
\end{cases}
\end{equation}
Here, thanks to condition (A1), $\omega$ is chosen in such a way that $\J_{\omega, v_{\ell-1}}$ is $\nu$-strongly convex, with $\nu$ independent of $v_{\ell-1}$, and the {\it finite} number of inner iterates $L_\ell$ is defined by the condition
\begin{equation}\label{adpt}
(1+\|q_{\ell-1}\|)\|Av_{(\ell,L_\ell)}-f\|\le \frac{1}{\ell^\alpha}\,,
\end{equation}
for a given parameter $\alpha >1$.

Since the inner loops are simply the augmented Lagrangian iterations for the functional $\J_{\omega, v_{\ell-1}}$, by Proposition \ref{stima} (ii) and \eqref{monb} such an integer $L_\ell$ always exists. We also remark that by construction, for every $\ell \ge 1$ and $k=1, \dots, L_\ell$, we have 
\begin{equation}\label{inclu}
A^*q_{(\ell,k)} \in \partial \J_{\omega, v_{\ell-1}}(v_{(\ell,k)})\,.
\end{equation}
Moreover, for every $\ell \ge 1$, again by Proposition \ref{stima}, $\|Av_{(\ell,k)}-f\|$ is nonincreasing in $k$.

Let us also remark that \eqref{ncbregman}, which can also be viewed as an implementation of an implicit
gradient descent with step $\frac 1\omega$, is actually a natural generalization of \eqref{bregman}. Indeed, if $\mathcal J =\tilde \J$ were actually convex, we could in fact choose $\omega=0$, and \eqref{ncbregman} would simply reduce to \eqref{bregman}.
\subsection{Analysis of convergence}

We now want to analyse the convergence properties of the algorithm defined by \eqref{ncbregman}. To do that we will use the following basic calculus lemma.
\begin{lemma}\label{cauchy}
Let $(a_\ell)_{\ell \in \mathbb N}$ a sequence of positive numbers, and let $(\delta_\ell)_{\ell \in \mathbb N}$ a positive decreasing sequence such that
$$
\sum_{\ell=0}^\infty \delta_\ell <+\infty\,.
$$
If $a_\ell$ satisfies for every $\ell$ the inequality
\begin{equation}
a_\ell \le (1+ \delta_{\ell-1})a_{\ell-1}+\delta_{\ell-1}\,,
\label{recurr}
\end{equation}
then $(a_\ell)_{\ell \in \mathbb N}$ is a convergent sequence.
\end{lemma}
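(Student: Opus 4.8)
The plan is to reduce the mixed additive--multiplicative recurrence to a purely multiplicative one, deduce boundedness from the convergence of an infinite product, and then upgrade boundedness to convergence by an almost-monotonicity argument. The point to keep in mind throughout is that \eqref{recurr} controls only the \emph{upward} increments of $(a_\ell)$, so boundedness alone will not suffice.

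First I would set $b_\ell := a_\ell + 1$. Adding $1$ to both sides of \eqref{recurr} and factoring gives $b_\ell \le (1+\delta_{\ell-1})\,b_{\ell-1}$, whence $b_\ell \le b_0 \prod_{j=0}^{\ell-1}(1+\delta_j)$. Since $\log(1+\delta_j)\le \delta_j$ and $\sum_{j} \delta_j < +\infty$, the partial sums $\sum_{j=0}^{\ell-1}\log(1+\delta_j)$ are bounded above, so the increasing partial products converge to a finite value. Consequently $(b_\ell)$, and therefore $(a_\ell)$, is bounded; fix $M>0$ with $a_\ell \le M$ for every $\ell$.

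Next I would exploit this boundedness to linearize \eqref{recurr}. From $a_\ell - a_{\ell-1} \le \delta_{\ell-1}(a_{\ell-1}+1)$ and $a_{\ell-1}\le M$ we obtain $a_\ell - a_{\ell-1} \le (M+1)\,\delta_{\ell-1} =: d_{\ell-1}$, where $(d_\ell)$ is summable with $S:=\sum_{j=0}^\infty d_j < +\infty$. To conclude I would introduce $w_\ell := a_\ell - \sum_{j=0}^{\ell-1} d_j$ (the empty sum being $0$ for $\ell=0$). The increment bound gives $w_\ell \le w_{\ell-1}$, so $(w_\ell)$ is nonincreasing; moreover $w_\ell \ge -S$ because $a_\ell \ge 0$. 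A nonincreasing sequence bounded below converges, and since $\sum_{j=0}^{\ell-1}d_j \to S$, it follows that $a_\ell = w_\ell + \sum_{j=0}^{\ell-1}d_j$ converges to $\lim_{\ell} w_\ell + S$.

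The main obstacle is exactly this last passage: the hypotheses bound the size of the upward jumps but say nothing about the downward ones, so a boundedness estimate cannot by itself rule out oscillation. The almost-monotonicity device---subtracting the summable tail $\sum_j d_j$ to manufacture a genuinely monotone auxiliary sequence---is what converts the one-sided summable control into convergence. Everything else is routine: the positivity $a_\ell \ge 0$ enters only to bound $w_\ell$ from below, and the monotonicity of $(\delta_\ell)$ is not actually used beyond guaranteeing summability.
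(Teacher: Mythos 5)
Your proof is correct, and it follows the same two-stage strategy as the paper (boundedness first, then convergence from the summable control on upward increments), but both stages are executed with different devices. For boundedness, the paper unrolls the recurrence completely into $a_\ell \le \bigl[\prod_{k<\ell}(1+\delta_k)\bigr]a_0+\sum_{\ell'<\ell}\bigl[\prod_{k>\ell'}(1+\delta_k)\bigr]\delta_{\ell'}$ and then bounds the infinite product via its logarithm; your substitution $b_\ell=a_\ell+1$, which turns the affine recurrence into the purely multiplicative $b_\ell\le(1+\delta_{\ell-1})b_{\ell-1}$, reaches the same bound with far less bookkeeping and is the cleaner decomposition. For the convergence step, the paper telescopes $a_k-a_{k'}\le\sum\delta_{\ell-1}a_{\ell-1}+\sum\delta_{\ell-1}$ and runs a $\limsup$/$\liminf$ sandwich to get $\limsup_k a_k\le\liminf_{k'}a_{k'}$, whereas you monotonize explicitly by setting $w_\ell=a_\ell-\sum_{j<\ell}d_j$ and invoking convergence of nonincreasing sequences bounded below; these are two standard realizations of the fact that a bounded sequence with summable positive parts of its increments converges, and yours has the small advantage of exhibiting the limit as $\lim_\ell w_\ell+S$ rather than arguing by contradiction with upper and lower limits. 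Your closing observations are also accurate: the positivity of $a_\ell$ enters only to bound $w_\ell$ below, and neither your argument nor the paper's uses the monotonicity of $(\delta_\ell)$ beyond its summability.
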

\begin{proof}
By the recurrence relation \eqref{recurr} we deduce
\begin{equation}
a_\ell \leq \left [\prod_{k=0}^{\ell-1} (1+ \delta_k) \right] a_0 + \sum_{\ell'=0}^{\ell-1}  \left [\prod_{k=\ell'+1}^{\ell-1} (1+ \delta_k) \right ] \delta_{\ell'}. \label{recurr2}
\end{equation}
Notice that 
\begin{eqnarray}
\log \left [\prod_{k=0}^{\infty} (1+ \delta_k) \right] &=& \sum_{k=0}^\infty \log (1+ \delta_k) \nonumber \\
&=& \sum_{k=0}^\infty \left ( \delta_k -\frac{1}{2 \xi_k} \delta_k^2 \right)< \infty,
\end{eqnarray}
for suitable $\xi_k \in (1,1+\delta_k)$,  for $k \in \mathbb N$, hence
$$
\prod_{k=0}^{\infty} (1+ \delta_k)  < \infty,
$$
and, together with \eqref{recurr2}, we deduce that $(a_\ell)_{\ell \in \mathbb N}$ is actually uniformly  bounded. Now, again by the recurrence relation \eqref{recurr}, for $k' \leq k$, we obtain
$$
a_k = a_{k'} +\sum_{\ell=k'+1}^k (a_\ell - a_{\ell-1}) \leq a_{k'}+\sum_{\ell=k'+1}^k \delta_{\ell-1} a_{\ell-1}+\sum_{\ell=k'+1}^k  \delta_{\ell-1}.
$$
Taking first the $\limsup$ as $k \to +\infty$ and then the $\liminf$ as $k'\to +\infty$ in the previous inequality, we conclude from the boundedness of $(a_\ell)_{\ell \in \mathbb N}$ and the convergence of the series $\sum_{\ell=0}^\infty \delta_\ell$ that $\limsup_{k \to +\infty}a_k\le \liminf_{k' \to +\infty}a_{k'}$, which implies the conclusion.
\end{proof}

In the following theorem we analyse the convergence properties of the proposed algorithm.
\begin{theorem}\label{chiave}
Assume that $\J$ satisfies (A1) and (A2), and let $(v_\ell)_{\ell \in \mathbb N}$ be the sequence generated by \eqref{ncbregman}. Then,
\begin{itemize}
 \item[(a)] $(Av_\ell-f) \to 0$ as $\ell \to \infty$;
 \item[(b)] $(v_\ell-v_{\ell-1}) \to 0$ as $\ell \to \infty$.
\end{itemize}
If in addition $\J$ is coercive on the affine space $\{v \in \EE: Av=f\}$, then $v_\ell$ is bounded and $(\J(v_\ell))_{\ell \in \mathbb N}$ is a convergent sequence. More in general, if $\J$ only satisfies (A1) and (A2), the implication
\begin{equation}\label{1378}
\mbox{if }(v_\ell)_{\ell \in \mathbb N} \hbox{ is a bounded sequence,}\mbox{ then } (\J(v_\ell))_{\ell \in \mathbb N} \hbox{ is convergent}
\end{equation}
holds.
\end{theorem}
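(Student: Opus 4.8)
The plan is to reduce the entire statement to the behaviour of the scalar \emph{energy} sequence $E_\ell := \J_{\omega,v_{\ell-1}}(\hat v_\ell)$, where $\hat v_\ell$ denotes the unique minimizer over $\F(f)$ of the strongly convex inner functional $\J_{\omega,v_{\ell-1}}$ from \eqref{1925} (i.e.\ the object called $\bar v_u$ in Corollary \ref{caz}, for $u=v_{\ell-1}$). Claim (a) requires no work: the adaptive stopping rule \eqref{adpt} together with $1+\|q_{\ell-1}\|\ge 1$ gives $\|Av_\ell-f\|\le \ell^{-\alpha}$, which tends to $0$ since $\alpha>1$.

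First I would record the basic inner-loop estimate. The inner iterations at stage $\ell$ are precisely the augmented Lagrangian iterations \eqref{bregman} for $\tilde\J=\J_{\omega,v_{\ell-1}}$, started at the dual variable $q_{(\ell,0)}=q_{\ell-1}$, so Corollary \ref{caz} (read with $\|q_0\|=\|q_{\ell-1}\|$) yields $\|v_\ell-\hat v_\ell\|^2\le \big[C_1(1+\|q_{\ell-1}\|)+C_2 E_\ell\big]\,\|Av_\ell-f\|$. Splitting this and inserting \eqref{adpt}, which controls exactly the combination $(1+\|q_{\ell-1}\|)\|Av_\ell-f\|\le\ell^{-\alpha}$ (and a fortiori $\|Av_\ell-f\|\le \ell^{-\alpha}$), produces the crucial bound $\|v_\ell-\hat v_\ell\|^2\le \ell^{-\alpha}(C_1+C_2E_\ell)$. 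I would then derive a recurrence for $E_\ell$ by using $\hat v_\ell$, which lies in $\F(f)$, as a competitor in the minimization defining $E_{\ell+1}=\min_{\F(f)}\J_{\omega,v_\ell}$; since $\J(\hat v_\ell)\le E_\ell$,
\[
E_{\ell+1}\le \J(\hat v_\ell)+\omega\|\hat v_\ell-v_\ell\|^2\le E_\ell+\omega\,\ell^{-\alpha}(C_1+C_2E_\ell)=(1+\omega C_2\ell^{-\alpha})E_\ell+\omega C_1\ell^{-\alpha}.
\]
With $\delta_\ell:=\omega\max\{C_1,C_2\}\,\ell^{-\alpha}$ (decreasing, and summable precisely because $\alpha>1$) and $E_\ell\ge 0$, this is, up to a harmless shift of the index, exactly the hypothesis of Lemma \ref{cauchy}, so $(E_\ell)$ converges and is in particular bounded. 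Feeding this boundedness back into the crucial bound gives $\|v_\ell-\hat v_\ell\|\to 0$.

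For claim (b) it remains to show $\|\hat v_\ell-v_{\ell-1}\|\to 0$, and here the same competitor inequality, read in the other direction, does the job. Since $E_\ell=\J(\hat v_\ell)+\omega\|\hat v_\ell-v_{\ell-1}\|^2$ while $\J(\hat v_\ell)\ge E_{\ell+1}-\omega\|\hat v_\ell-v_\ell\|^2$, I obtain $\omega\|\hat v_\ell-v_{\ell-1}\|^2\le (E_\ell-E_{\ell+1})+\omega\|\hat v_\ell-v_\ell\|^2$. Both terms on the right tend to $0$ — the first because $E_\ell$ converges, the second by the previous step — so $\|\hat v_\ell-v_{\ell-1}\|\to 0$, whence $\|v_\ell-v_{\ell-1}\|\le\|v_\ell-\hat v_\ell\|+\|\hat v_\ell-v_{\ell-1}\|\to 0$. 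Note that (a) and (b) are established with no coercivity or boundedness hypothesis, since the convergence of $(E_\ell)$ is unconditional.

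For the convergence of $\J(v_\ell)$ under boundedness of $(v_\ell)$, I would observe that $\hat v_\ell$ is then bounded as well (being within $o(1)$ of $v_\ell$), so both sequences lie in a fixed compact set, on which the locally Lipschitz $\J$ is genuinely Lipschitz; hence $|\J(v_\ell)-\J(\hat v_\ell)|\le \mathrm{Lip}\cdot\|v_\ell-\hat v_\ell\|\to 0$, while $\J(\hat v_\ell)=E_\ell-\omega\|\hat v_\ell-v_{\ell-1}\|^2$ converges to $\lim_\ell E_\ell$; thus $\J(v_\ell)$ converges. When $\J$ is coercive on $\F(f)$, boundedness of $(v_\ell)$ is not assumed but deduced: $\J(\hat v_\ell)$ is bounded and $\hat v_\ell\in\F(f)$, so coercivity forces $(\hat v_\ell)$, and then $(v_\ell)$, to be bounded, reducing to the previous case. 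The one genuinely delicate point is the derivation of the recurrence for $E_\ell$: it hinges on the growth assumption (A2), which is exactly what lets the otherwise a priori unbounded factor multiplying $\|Av_\ell-f\|$ in Corollary \ref{caz} be absorbed into $C_2E_\ell$, so that — thanks to the precise matching between the weight $(1+\|q_{\ell-1}\|)$ in the stopping rule \eqref{adpt} and the constants of Corollary \ref{caz}, and to the choice $\alpha>1$ — the perturbation $\delta_\ell$ is summable and Lemma \ref{cauchy} applies.
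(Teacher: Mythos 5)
Your proof is correct and follows essentially the same route as the paper: the same auxiliary constrained minimizers $\bar v_\ell$, the same combination of Corollary \ref{caz} with the stopping rule \eqref{adpt} to get $\|v_\ell-\bar v_\ell\|^2\le \ell^{-\alpha}(C_1+C_2 E_\ell)$, and the same recurrence for $E_\ell$ fed into Lemma \ref{cauchy}. The only (harmless) deviation is in part (b), where the paper applies Lemma \ref{cauchy} a second time to deduce that $\J(\bar v_\ell)$ converges before extracting $v_\ell-\bar v_{\ell+1}\to 0$, whereas you obtain $\hat v_\ell-v_{\ell-1}\to 0$ directly from $E_\ell-E_{\ell+1}\to 0$ and the two-sided competitor inequality --- a slight streamlining of the same idea.
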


\begin{proof}
Part (a) of the statement is a direct consequence of the construction of $v_\ell$ and Proposition \ref{stima} (ii).
We now set for every $\ell$
\begin{equation}\label{barv}
\bar v_\ell:=\hbox{ arg min}_{Av=f} \J_{\omega, v_{\ell-1}}(v)\,.
\end{equation}
Notice that by definition $\bar v_\ell$ coincides with the element $\bar v_u$ considered in Corollary \ref{caz} when $u=v_{\ell-1}$. Similarly the element $v_\ell$ given by algorithm \eqref{ncbregman} coincides with the element $v_{k,u}$ considered in Corollary \ref{caz} when $k=L_\ell$ and $u=v_{\ell-1}$. Therefore \eqref{1785} with $q_0=q_{(\ell,0)}$ and \eqref{adpt} imply there exist two positive constants $C_1$ and $C_2$ independent of $\ell$, such that
\begin{equation}\label{17852}
\|v_\ell-\bar v_\ell\|^2 \le [C_1+ C_2 \J_{\omega, v_{\ell-1}}(\bar v_\ell)]\frac{1}{\ell^\alpha}\,.
\end{equation}
By this latter estimate and the minimality of $\bar v_{\ell+1}$ we get
\begin{eqnarray}
\J_{\omega, v_\ell}(\bar v_{\ell+1})&=&\J(\bar v_{\ell+1})+\omega \|v_\ell-\bar v_{\ell+1}\|^2  \nonumber \\
&\le& \J(\bar v_\ell)+\omega \|v_\ell-\bar v_\ell\|^2 \le \J(\bar v_\ell)+\frac{C_1\omega}{\ell^\alpha}+\frac{C_2\omega}{\ell^\alpha}\J_{\omega, v_{\ell-1}}(\bar v_\ell)\label{2938}\\
&\le& \frac{C_1\omega}{\ell^\alpha}+\Big(1+\frac{C_2\omega}{\ell^\alpha}\Big)\J_{\omega, v_{\ell-1}}(\bar v_\ell)\, \nonumber.
\end{eqnarray}
By Lemma \ref{cauchy} we eventually deduce that $(\J_{\omega, v_{\ell-1}}(\bar v_\ell))_{\ell \in \mathbb N}$ is a convergent sequence, in particular it is bounded. Therefore, there exists a constant $C$ independent of $\ell$ such that, by \eqref{17852}, 
\begin{equation}\label{17853}
\|v_{\ell+1}-\bar v_{\ell+1}\|^2 \le \frac{C}{(\ell+1)^\alpha}\,,
\end{equation}
and, by \eqref{2938}, we have also
\begin{equation}\label{29381}
\J(\bar v_{\ell+1})\le \J(\bar v_{\ell+1})+\omega \|v_\ell-\bar v_{\ell+1}\|^2 \le \J(\bar v_\ell)+\frac{C}{\ell^\alpha}\,.
\end{equation}
Again Lemma \ref{cauchy} entails now that
\begin{equation}\label{456}
\J(\bar v_\ell)\hbox{ is a convergent sequence}\,,
\end{equation}
so that, by \eqref{29381} we get that $(v_\ell-\bar v_{\ell+1}) \to 0$ as $\ell$ goes to $+\infty$, and this vanishing convergence, combined with \eqref{17853}, gives part (b) of the statement.

Being $\J$ locally Lipschitz as it is an $\omega$-semi-convex function, if $v_\ell$ is uniformly bounded, by \eqref{17853} and \eqref{456} we immediately conclude that $(\J(v_\ell))_{\ell \in \mathbb N}$ is a convergent sequence. Moreover, if $\J$ is coercive on the affine space $\{v \in \EE: Av=f\}$, then $\bar v_\ell$ is bounded by \eqref{456}, and so is also $(v_\ell)_{\ell \in \mathbb N}$ by \eqref{17853}, as required.
\end{proof}

As a consequence we get our main result of this section. Whenever $v_\ell$ is bounded, every cluster point is a constrained critical point of $\J$ on the affine space $\{v \in \EE: Av=f\}$. We again recall that boundedness of $v_\ell$ is guaranteed by Theorem \ref{chiave} when $\J$ is assumed to be coercive on the above affine space.

\begin{theorem}\label{mainthm}
Assume that $\J$ satisfies (A1) and (A2), and let $(v_\ell)_{\ell \in \mathbb N}$ be the sequence generated by \eqref{ncbregman}. If $(v_\ell)_{\ell \in \mathbb N}$ is bounded, every of its limit points is a constrained critical point of $\J$ on the affine space $\{v \in \EE: Av=f\}$. 
\end{theorem}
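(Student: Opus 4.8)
The plan is to show that an arbitrary limit point $v^*$ of the bounded sequence $(v_\ell)$ satisfies the nonsmooth Lagrange multiplier characterization \eqref{6529}, namely $Av^*=f$ together with $\partial\J(v^*)\cap\ran(A^*)\neq\emptyset$. The starting point is the inclusion \eqref{inclu} at the last inner iterate, $A^*q_\ell\in\partial\J_{\omega,v_{\ell-1}}(v_\ell)$, combined with the decomposition \eqref{2469} applied to the $C^1$ term $\omega\|\cdot-v_{\ell-1}\|^2$. This gives
\begin{equation*}
\xi_\ell:=A^*q_\ell-2\omega(v_\ell-v_{\ell-1})\in\partial\J(v_\ell)\,.
\end{equation*}
The underlying idea is that the quadratic correction $2\omega(v_\ell-v_{\ell-1})$ vanishes along the iteration by Theorem \ref{chiave}(b), so that in the limit a subgradient of $\J$ is produced purely by a multiplier lying in $\ran(A^*)$.

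First I would establish the boundedness needed for compactness. Since $(v_\ell)$ is bounded and $\J$ is locally Lipschitz (being $\omega$-semi-convex), the values $\J(v_\ell)$ are bounded; hence by the growth condition (A2) the subgradients $\xi_\ell$ are uniformly bounded. As $v_\ell-v_{\ell-1}\to 0$, it follows that $A^*q_\ell=\xi_\ell+2\omega(v_\ell-v_{\ell-1})$ is bounded as well, and the injectivity estimate \eqref{inj} (valid because $A$ is surjective) then forces $(q_\ell)$ to be bounded in $\EE_1$.

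Next, passing to a subsequence, I would take $v_{\ell_j}\to v^*$ and, by the boundedness just obtained, a further subsequence with $q_{\ell_j}\to q^*$. Continuity of $A^*$ yields $A^*q_{\ell_j}\to A^*q^*$, and since $2\omega(v_{\ell_j}-v_{\ell_j-1})\to 0$ we get $\xi_{\ell_j}\to A^*q^*=:\xi^*$. At this point the closure property \eqref{6480} of the subdifferential of a $C^1$-perturbation of a convex function (applicable by (A1)) gives $\xi^*\in\partial\J(v^*)$. Since $\xi^*=A^*q^*\in\ran(A^*)$ and, by Theorem \ref{chiave}(a), $Av^*=f$, the characterization \eqref{6529} shows that $v^*$ is a constrained critical point.

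The main obstacle is the compactness argument: this is precisely the single place where finite dimensionality enters, since it is used to extract strongly convergent subsequences of both $(v_\ell)$ and $(q_\ell)$. The delicate point is ensuring that the limiting subgradient lies in $\ran(A^*)$; this hinges on the vanishing of the perturbation $2\omega(v_\ell-v_{\ell-1})$, so that the contribution to $\xi_\ell$ falling outside $\ran(A^*)$ disappears in the limit and $\xi^*$ inherits membership in $\ran(A^*)$ from the multipliers $A^*q_\ell$.
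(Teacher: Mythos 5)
Your proposal is correct and follows essentially the same argument as the paper: the shifted subgradient $A^*q_\ell-2\omega(v_\ell-v_{\ell-1})\in\partial\J(v_\ell)$, its boundedness via (A2), the vanishing of the correction term by Theorem \ref{chiave}(b), the closure property \eqref{6480}, and the characterization \eqref{6529}. The only (harmless) variation is that you deduce boundedness of $(q_\ell)$ from \eqref{inj} and pass to a convergent subsequence of multipliers, whereas the paper skips this and instead notes that ${\rm dist}(\hat p_\ell,\ran(A^*))\to 0$ and that $\ran(A^*)$ is closed in finite dimension.
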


\begin{proof}
Let $(q_\ell)_{\ell \in \mathbb N}$ be the sequence defined by \eqref{ncbregman}, and let $p_\ell:=A^*q_\ell$, and $\hat p_\ell:=p_\ell-2\omega(v_\ell-v_{\ell-1})$. By \eqref{2469} and \eqref{inclu}, we have 
\begin{equation}\label{meno2}
\hat p_\ell \in \partial \J(v_\ell)\,,
\end{equation}
and by the boundedness of $(v_\ell)_{\ell \in \mathbb N}$, \eqref{1378}, and (A2), we then get that $\hat p_\ell$ is bounded too.
By Theorem \ref{chiave}, part (b), we deduce that $p_\ell-\hat p_\ell \to 0$, which in particular gives
\begin{equation}\label{meno1}
\lim_{\ell \to +\infty}{\rm dist}(\hat p_\ell\,, \ran(A^*))=0\,.
\end{equation}
Now, if a subsequence $v_{\ell_j} \to v \in \EE$, possibly taking a further subsequence we may assume that $\hat p_{\ell_j} \to \hat p \in \partial \J(v)$, where the last inclusion follows from \eqref{6480} and \eqref{meno2}. Moreover, since in finite dimension $\ran(A^*)$ is closed, by \eqref{meno1} $\hat p \in \ran(A^*)$. Since $Av=f$ by part (a) of Theorem \ref{chiave}, \eqref{6529} yields now the desired conclusion.
\end{proof}

\section{Examples of Significant Applications}\label{applications}

We consider again two finite dimensional Euclidean spaces $\EE$ and $\EE_1$ and a surjective linear constraint map $A: \EE \to \EE_1$. In addition we consider another finite dimensional Euclidean space $\mathcal K$ and a linear operator $T: \EE \to \mathcal K$. Again, to ease the notation, we indicate with $\| \cdot\|$ the Euclidian norms on $\EE$, $\EE_1$, or $\mathcal K$ indifferently, as they can be subsumed from the context where they are applied.
For fixed $g \in \mathcal K$ and $f \in \EE_1$,  in the following we consider general nonsmooth and nonconvex functionals of the type
\begin{equation} \label{specfunc}
\J(v) = \| T v - g \|^2 + \gamma \sum_{k=1}^m  U_k(v_k),
\end{equation} 
of which we seek the critical points, subject to a linear constraint $A v = f$, where $\gamma >0$ is a positive regularization parameter. Here $(v_k)_{k=1}^m$ are the components  of the vector $v$ with respect to a fixed basis in the space $\EE$ of dimension $m = \dim(\EE)$,
and $U_k:\mathbb R \to \mathbb R_+$, for $k=1,\dots,m$, are scalar nonconvex maps. \\

With the intention of demonstrating the very broad  impact of Algorithm \eqref{ncbregman}, in this section we would like to present a (incomplete!) list of significant models, mainly inspired by image processing and continuum mechanics, where the Algorithm \eqref{ncbregman} is already directly and robustly  used. In particular we shall show that after discretization such models fit the general optimization problem \eqref{specfunc} with specific choices of the maps $U_k$. We further discuss the condition of applicability of Algorithm \eqref{ncbregman} in each of them.

\subsection{Free-discontinuity problems}

The terminology `free-discontinuity problem' was introduced by De Giorgi \cite{dg91} to indicate a class of variational problems which consist in the minimization of a functional, involving both volume and surface energies, depending on a closed set $K \subset \mathbb R^d$, and a function $u$ on $\mathbb R^d$ usually smooth outside of $K$. In particular, 
\begin{itemize}
\item $K$ is not fixed a priori and is an unknown of the problem;
\item $K$ is not a boundary in general, but a free-surface inside the domain of the problem.
\end{itemize}

\subsubsection{The  Mumford-Shah functional in image processing}\label{MSfunc}

\noindent The best-known example of a free-discontinuity problem is the one modelled by the so-called Mumford-Shah functional \cite{MS89}, which is defined by
$$
\mathcal J(u,K):= \int_{\Omega \setminus K} \left [ | \nabla u(x) |^2 + \alpha ( u(x) - g(x) )^2 \right ] dx +\beta \mathcal{H}^{d-1}(K \cap \Omega).
$$
The set $\Omega$ is a bounded open subset of $\mathbb{R}^d$, $\alpha, \beta >0$ are fixed constants, and $g \in L^\infty(\Omega)$. Here $\mathcal H^N$ denotes the $N$-dimensional Hausdorff measure. Inspired by image processing applications the dimension of the underlying Euclidean space $\mathbb R^d$ shall  be $d=2$, although in principle the analysis can be conducted in any dimension.  In fact, in the context of visual analysis, $g$ is a given noisy image that we want to approximate by the minimizing function $u \in W^{1,2}(\Omega \setminus K)$; the set $K$ is simultaneously used in order to {\it segment} the image into connected components.  For a broad overview on free-discontinuity problems, their analysis, and applications, we refer the reader to \cite{AFP}.
\\

In fact, the Mumford-Shah functional is the continuous version of a previous discrete formulation of the image segmentation problem proposed by Geman and Geman in \cite{GG}; see also the work of Blake and Zisserman in \cite{BZ}. Let us recall this discrete approach.
Let $d=2$ (as for image processing problems),  $\Omega = [0,1]^2$, and let $u_{i,j}=u(h i,h j)$ be a discrete function defined on $\Omega_h:=\Omega \cap h \mathbb{Z}^2$, for $h>0$.
Define $W_{r}^2 ( t ) := \min \left \{ t^2, r^2 \right \}$, $r>0$, to be the {\it truncated quadratic potential}, and
\begin{eqnarray}
\mathcal J_{h}(u ) &:=& h^2 \sum_{(h i, h j) \in \Omega_h} W_{\sqrt{\frac{\beta}{h}}}^2 \left ( \frac{u_{i+1,j} - u_{i,j}}{h} \right ) \nonumber \\
&+& h^2 \sum_{ ( h i, h j) \in \Omega_h} W_{\sqrt{\frac{\beta}{h}}}^2  \left ( \frac{u_{i,j+1} - u_{i,j}}{h} \right )\nonumber \\
&+& \alpha h^2 \sum_{(h i,h j) \in \Omega_h} ( u_{i,j} - g_{i,j})^2.
\label{discrete}
\end{eqnarray}
We shall now reformulate the minimization of this finite dimensional discrete problem into a linearly constrained minimization of a nonconvex functional of the discrete derivatives.   For this purpose, we consider the derivative matrix $D_h : \mathbb R^{n^2} \to \mathbb R^{2n(n-1)}$ that maps the vector $(u_{j + (i-1) n}) := (u_{i,j})$ to the vector composed of the finite differences in the horizontal and vertical directions $u_x$ and $u_y$ respectively, given by
$$
D_h u := \left [ 
\begin{array}{l}
u_x\\
u_y
\end{array}
\right ], \quad \left \{ \begin{array}{ll} (u_x)_{j + n(i-1) }:= (u_x)_{i,j}:= \frac{u_{i+1,j} - u_{i,j}}{h}, i=1,\dots,n-1, j=1,\dots,n\\ (u_y)_{j + (n-1)(i-1) }:= (u_y)_{i,j}:= \frac{u_{i,j+1} - u_{i,j}}{h}, i=1,\dots,n, j=1,\dots,n-1
\end{array} \right. .
$$
{ Note that its range $\ran(D_h) \subset \mathbb{R}^{2n(n-1)}$ is a $(n^2-1)$-dimensional subspace} because $D_h c = 0$ for constant vectors $c \in \mathbb{R}^{n^2}$.  It is not difficult to show the representation of any vector $u \in \mathbb R^{n^2}$ in terms of the following differentiation-integration formula, given by
$$
 u =D^\dagger_h D_h u + c,
$$
where $D_h^\dagger$ is the pseudo-inverse matrix of $D_h$ (in the Moore-Penrose sense); note that $D^\dagger_h$ maps $\ran(D_h)$ injectively into $\mathbb R^{n^2}$.  Also, $c$ is a constant vector that depends on $u$, and the values of its entries coincide with the mean value $h^2\sum_{ (hi,hj) \in \Omega_h} u_{i,j}$ of $u$. Therefore, any vector $u$ is uniquely identified by the pair $(D_h u , c)$.
\\

\noindent  
Since constant vectors comprise the null space of $D_h$, the orthogonality relation 
\begin{equation}\label{orthog}
\langle D^\dagger_h D_h u,c\rangle =0
\end{equation} holds for any vector $u$ and any constant  vector $c$. Here the scalar product $\langle u, u' \rangle = \sum_{ (hi,hj) \in \Omega_h} u_{i,j} u_{i,j}'$ is the standard Euclidean scalar product on $\mathbb R^{n^2}$, which induces the Euclidean norm $\| \cdot \|$.
\\

Using the orthogonality property \eqref{orthog}, denoting the mean value of $g$ by $c_g$, we have that
\begin{eqnarray}
\| u - g \|^2 &=& \| D^\dagger_h D_h u -  D^\dagger_h D_h g + ( c - c_g )\|^2 \nonumber \\
&=&    \| D^\dagger_h D_h u -  D^\dagger_h D_h g\|^2 + \| c - c_g \|^2 \label{orthog2}
\end{eqnarray}
Hence, with a slight abuse of notation, we can reformulate the original discrete functional $\eqref{discrete}$ in terms of derivatives, and mean values, by
\begin{eqnarray*}
\mathcal J_{h}(v,c) &=& h^2 \left [ {\alpha} \| D_h^\dagger v  - \tilde g  \|^2 + {\alpha} \| c - c_{g} \|^2 + \sum_{i,j} \min \left \{|v_{i,j}|^2, \frac{\beta}{h}  \right \} \right ].
\label{2Dunconstrained}
\end{eqnarray*}
where $v = D_h u \in \mathbb{R}^{2n(n-1)}$, and $\tilde g =  D^{\dagger}_h D_h g \in \mathbb{R}^{n^2}$. Of course $c = c_g$ is  assumed at any minimizer $u$, since the corresponding term in $\mathcal J_{h}$ does not depend on $v$.  However, in order to minimize only over vectors in $\mathbb R^{2n(n-1)}$ that are derivatives of vectors in $\mathbb R^{n^2}$, we must minimize $\mathcal J_{h}(v,c)$ subject to the constraint $(D_h D^{\dagger}_h   - I) v= 0$, and such $2n(n-1)$ linearly independent constraints  actually correspond to a discrete ${\rm curl}$-free condition on the vector $v$. 
\\
To summarize, we arrive at the following constrained optimization problem:

\begin{eqnarray}
&&
 \left\{ \begin{array}{llll}
\textrm{Minimize} &  \mathcal J_{h}(v) = h^2 \big[ {\alpha} \| T v  - \tilde g \|^2+  \sum_{i,j} W^2_{\sqrt{\frac{\beta}{h}}}(v_{i,j}) \big].
\\
\\
 \textrm{subject to} & A v = 0, \\\end{array}
\right.
\label{MS2d}
\end{eqnarray}
for $T = D_h^\dagger$ and { $A = {I}- D_h D^{\dagger}_h $.}  Actually the explicit use of the pseudo-inverse matrix $D^\dagger$ is only needed for determining the operator $T$, while the linear constraint $Av =0$ is equivalent to a discrete $\operatorname{curl}$-free condition on the vectors $v$, see \cite{fowa10} for details; therefore it can simply be expressed in terms of a sparse linear system corresponding to the discretization of the $\operatorname{curl}$ operator.
Once the minimal derivative vector $v$ is computed, we can assemble the minimal $u$ by incorporating the mean value $c_g$ of $g$ as follows:
$$
 u = D_h^\dagger v + c_g.
$$
We stress that when $v$ is curl-free, a primitive $u$ can be easily recovered, up to an arbitrary constant, by performing a line integration, so again this process does not require the explicit form of $D_h^\dagger$, see the details
of \eqref{lineint} and \eqref{lineint2} in Subsection \ref{brittle} below. Notice that the objective functional in the optimization problem \eqref{MS2d} is precisely of the form \eqref{specfunc}, with the maps $U_k =  h^2 W^2_{\sqrt{\frac{\beta}{h}}}$ for all $k=(i,j)$.
As we shall discuss in Section \ref{issuesapp}, the functional $\mathcal J_{h}$ in \eqref{MS2d} is in general neither coercive nor $\omega$-semi-convex as required by the conditions
of applicability of Algorithm \eqref{ncbregman}. Nevertheless we shall see in Section \ref{smoothapp} that a mild regularization will allow us to treat efficiently also problems as \eqref{MS2d}. We provide in  Remark \ref{remfin} below a possible guideline on the efficient implementation of the pseudoinverse matrix $D_h^\dagger$ and its adjoint $(D_h^\dagger)^*$, as it appears in the iterations of the inner loop of Algorithm \eqref{ncbregman} when applied to the minimization of the Mumford-Shah functional. Since it is genuinely a numerical issue, which also does not appear in the other possible applications we are going to discuss (see Subsections \ref{brittle} and \ref{cohesive} below), the task of implementing Remark 4.1 goes beyond the scope of this paper.

\subsubsection{Quasi-static evolution of brittle fractures}\label{brittle}

Beside static models such as the Mumford-Shah functional minimization for image deblurring and denoising, quasi-static evolutions of elastic bodies through minimizers 
of free-discontinuity energies are of great relevance in continuum mechanics.  In this modeling, the crack-free reference configuration of a linearly elastic body is denoted by 
$\Omega$. The set $\Omega \subset \mathbb R^d$ is taken to be an open, bounded and connected domain with Lipschitz boundary $\partial \Omega$, for instance $\Omega = (0,1)^d$. We consider an energy containing again 
bulk and surface terms
$$
\mathcal J(u, K) :=  \int_{\Omega \setminus K} | \nabla u(x) |^2 dx
+\beta \mathcal{H}^{d-1}(K \cap \Omega).
$$
The energy functional $\mathcal J(u,K)$ reflects Griffith's principle that, to create a crack one has to spend
an amount of elastic energy that is proportional to the area of the crack created \cite{gr21}.
The configuration of the body evolves in time under the action of a varying load $g(t)$, which
is applied on an open subset $\Omega_D \subset \Omega$ of positive $d$-dimensional Lebesgue measure. We assume
that $g \in L^\infty ([0, T] ; W^{1,\infty}(\Omega)) \cap W^{1,1} ([0, T] ; H^1 (\Omega))$, and we define the admissible
displacements consistent to the actual load
\begin{equation}\label{loadconstr}
\mathcal A(g(t)) := \{u \in SBV(\Omega) : u|_{\Omega_D} = g(t)|_{\Omega_D} \},
\end{equation}
where $SBV(\Omega)$ is the space of special bounded variation functions \cite{AFP}.
The model of brittle fracture discrete time evolution proposed by Francfort and Marigo \cite{frma98} is described as follows:
Let $0 = t_0 < t_1 < \dots < t_F = T$ be a discretization of the time interval $[0, T ]$, with $\Delta t :=
\max \{t_k - t_{k-1} : k = 1, \dots, F \}$. Given an initial crack $K(0) = \Gamma_D$, we seek $(u(t_k ), K(t_k ))$, $k = 1,\dots, F$ , 
such that
\begin{equation}\label{instmin}
(u(t_k ),K(t_k )) = \arg \min_{ u \in \mathcal A(g(t_k))} \mathcal J(u, K \cup K(t_{k-1})).
\end{equation}

Following a similar discretization in space for $d=2$ as done for the Mumford-Shah functional
in Section \ref{MSfunc}, let  $u_{i,j}=u(h i,h j)$ be a discrete function defined on $\Omega_h:=\Omega \cap h \mathbb{Z}^2$, for $h>0$.
Define
\begin{eqnarray}
\mathcal J_{h}(u ) &:=& h^2 \sum_{(h i, h j) \in \Omega_h \setminus K_h} W_{\sqrt{\frac{\beta}{h}}}^2 \left ( \frac{u_{i+1,j} - u_{i,j}}{h} \right ) \nonumber \\
&+& h^2 \sum_{ ( h i, h j) \in \Omega_h \setminus K_h} W_{\sqrt{\frac{\beta}{h}}}^2  \left ( \frac{u_{i,j+1} - u_{i,j}}{h} \right ),
\label{discrete2}
\end{eqnarray}
where $K_h$ is the current fracture. Up to considering appropriate domain decompositions and without loss of generality we can assume $K_h = \emptyset$ and
$\Omega_h$ be the reference domain for the optimization. Defining
$$
v_{i,j} = \left ( \underbrace{\frac{u_{i+1,j} - u_{i,j}}{h}}_{:=(v_{i,j})_1}  ,  \underbrace{\frac{u_{i,j+1} - u_{i,j}}{h}}_{:=(v_{i,j})_2} \right),
$$
we have that $v=(v_{i,j})_{ (h i, h j) \in \Omega_h}$ fulfills again a $\operatorname{curl}$-free condition as well as several linear constraints given by the 
discretization of the compatibility condition $u \in \mathcal A(g(t_k))$. In particular, if we assume that $g$ is locally constant with a jump at a given
interface $\Gamma_D \subset \Omega$, we may  have, as in the example shown in Figure \ref{domainsfr}, that
\begin{equation}
\left \{ \begin{array}{ll}
v_{i,j}=0, & \mbox{ for all } (h i, h j) \in \mbox{int}(\Omega_D)_h \setminus (\Gamma_D)_h \cap (\Omega_D)_h,\\
(v_{i,j})_2= \frac{g_{i,j+1}- g_{i,j}}{h}, &\mbox{ for all } (h i, h j) \in  (\Gamma_D)_h\cap (\Omega_D)_h,\\
(v_{i,j})_2=0, & \mbox{ for all } (h i, h j) \in (\partial(\Omega_D)_h \cap \Omega_h) \setminus (\Gamma_D)_h.\\
\end{array} \right .
\end{equation}
\begin{figure}[htp]
\begin{center}
\includegraphics[width=2.05in]{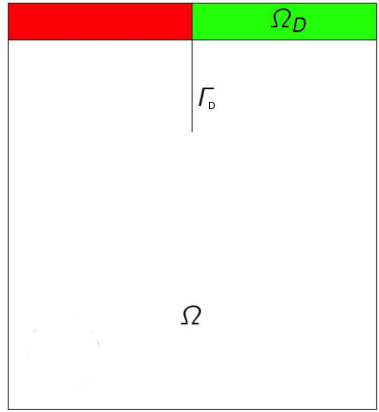}
\end{center}
\caption{We show an example of domain $\Omega$ and its sub-portion $\Omega_D$ as well as the interface $\Gamma_D$ as mentioned in the description
in the Francfort-Marigo model..}\label{domainsfr}
\end{figure}

We depict the typical situation described by this model in Figure \ref{domainsfr}, where the different colors represent the different uniform values of $g$ on the domain $\Omega_D$ and $\Gamma_D$ the interface of discontinuity. For simplicity, let us summarize all these linear conditions into a linear constraint
$$
A v =f,
$$
for a suitable matrix $A$. 
Then, we can rewrite the functional $\mathcal J_{h}$, with a slight abuse of notation, as
\begin{equation}\label{FM2d}
\mathcal J_{h}(v):= h^2 \sum_{i,j} W^2_{\sqrt{\beta/h}} (v_{i,j}) 
.
\end{equation}
to be minimized with respect to $v$ under the constraint $A v = f$.  Let us stress now that $g$ is actually a boundary datum, and provided the derivative field $v$ of $u$ we can recover $u$ simply by line integration.
For a suitable coordinate system on the discrete domain, we define accordingly the line integration operator 
\begin{equation}\label{lineint}
u_{i,j} =  \sum_{m=0}^j v_{i,m} + g_{0,m}, \quad \mbox{for all } (hi,hj) \in \Omega_h,
\end{equation}
which can be expressed in the compact form by 
\begin{equation}\label{lineint2}
u = B v + g.
\end{equation}
Notice that, while for the Mumford-Shah
functional we needed to consider the psuedo-inverse matrix $D^\dagger$ of the discrete differentiation operator $D$
within the fidelity term $\|T v - \tilde g\|$, in order to have the orthogonality relationships \eqref{orthog} and \eqref{orthog2}, here the $D^\dagger$ is of no use. We also mention that
 the minimization of \eqref{FM2d} is again of the general type \eqref{specfunc}
for the choice of the maps $U_k =  h^2 W^2_{\sqrt{\frac{\beta}{h}}}$ for all $k=(i,j)$. The issue of the coercivity of $\mathcal J_{h}$ on the affine space $\{A v = f\}$ is addressed in the work \cite{arcafosoXX} on discrete rate-independent evolutions.
\\

The ability of predicting complicated crack paths is the greatest strength of the Francfort-Marigo
model and the reason for its popularity. Nevertheless Francfort and Marigo acknowledged in their seminal
work \cite{frma98} that, from a mechanical point of view, it would be preferable to define an evolution by
means of local minimizers. We shall show in Section \ref{numexper}  an application 
of our Algorithm \eqref{ncbregman} where we actually perform a simulation of a fracture in a one 
dimensional model, evolving through critical points, being a numerically robust  and  physically sound description
of the happening of the fracture.

\subsection{Quasi-static evolution of cohesive fractures}\label{cohesive}

In these models one considers the fracture growth in an elastic body when the cohesive forces acting between the lips of the crack are not negligible. Again we address the case
in which the evolution is driven by a time-dependent boundary displacement on a fixed portion of
the boundary \cite{ca08,cato11}. For the sake of simplicity we provide only a one dimensional description
of the model, mentioning that there are not principle difficulties to extend what follows to higher dimension.

Let $A, R > 0$, and consider the following energy functional:
\begin{equation}\label{coesfunc}
\mathcal J (u) = \int_{0}^A |u'|^2 \, dx + \int_A^{2A} |u'|^2 \, dx + g_{\Gamma} (|u^+ - u^-|),
\end{equation}
where $u: (0,2A) \to \mathbb{R}$ is a $SBV$-function, $\Gamma$ a fixed point in $(0,2A)$,   and $g: [0, \infty) \to [0, \infty)$ is given by
$$
g (s) = 
\begin{cases}
 \vspace{.1cm}
 - \frac{s^2}{2 R} + s &\text{ if } 0 \leq s < R, \\
\frac{R}{2} &\text{ if } 0 \leq s \geq R.
\end{cases}
$$
As described in \cite{ca08,cato11}, and similarly to the quasi-static evolution proposed in the  brittle fracture
model by Francfort and Marigo in Section \ref{brittle}, the evolution of the body configuration develops along
critical points of the energy \eqref{coesfunc}. Hence it is crucial to be able to compute  critical points of $\mathcal J$ under the boundary conditions $u (0) = f(0) = a$, $u(2A) = f (2A) = b$, 
with $f: (0, 2A) \to \mathbb{R}$ given.

Again, we discretize the space variable by setting $h=1/N$ for $N \in \mathbb N$ and
$$
u_i : = u (i h A) , \qquad i = 0, \ldots, 2 N.
$$
Let us assume now that $\Gamma$ is placed exactly in the middle of the domain, i.e., $\Gamma =A$.
Then, the functional $\mathcal J$ can be approximated by its discretized version
\begin{eqnarray*}
\mathcal J_h(u) &=& \sum_{i=0}^{N-1} A h \left| \frac{u_{i+1} - u_i}{A h}  \right|^2 +  \sum_{i=N+1}^{2N-1} A h \left| \frac{u_{i+1} - u_i}{A h}  \right|^2 
+ g_{\Gamma} (|u_{N+1} - u_N|) \\
&=& \frac{1}{A h}  \sum_{i=0}^{N-1} | u_{i+1} - u_i |^2 + \frac{1}{A h} \sum_{i=N+1}^{2N-1} | u_{i+1} - u_i |^2 
+ g_{\Gamma} (|u_{N+1} - u_N|).
\end{eqnarray*}
Setting $v_i:= u_{i+1} - u_i$ for every $i = 0, \ldots, 2N - 1$, we may rewrite the previous expression as
\begin{equation}\label{FC1d1}
\mathcal J_h (v) = \frac{1}{A h}  \sum_{i=0}^{N-1} | v_i |^2 + \frac{1}{Ah} \sum_{i=N+1}^{2N-1} | v_i |^2 
+ g_{\Gamma} (| v_N|).
\end{equation}

Finally we seek for critical points of $\mathcal J_h(v)$ subject to 
\begin{equation}\label{FC1d2}
v \in \mathcal A(f) = \left\{ z \in  \mathbb{R}^{2N}:  \sum_{i=0}^{2N -1} z_i = f (2A) - f (0) \right\}.
\end{equation}
It is not difficult to see now that this latter problem is precisely of the type \eqref{specfunc} for 
the choice of $U_k =0$ for all $k \neq N$, $U_N(\cdot) = g_\Gamma(|\cdot|)$, and $T=I-P_N$, $P_N$ being the projection on the $N$-th coordinate.
Let us now stress that the functional $\mathcal J_h$ is actually coercive over the set of feasible
competitors and also $\omega$-semi-convex. Indeed, for any choice of $v^0 \in \mathbb R^{2N}$ the functional 
$$
\mathcal J_h (v) + \omega A h \sum_{i=0}^{2N -1} |v_i - v_{i}^0|^2
$$
is strictly convex as soon as
$$
\omega > \frac{N}{2A} \| g '' \|_{L^{\infty}((0,\infty))} = \frac{1}{2A h R}.
$$
Hence, our Algorithm \eqref{ncbregman} is directly applicable. We refer to \cite{arcafosoXX} for more details
about discrete rate independent evolutions driven by Algorithm \eqref{ncbregman}.
\\

We shall conclude this section by mentioning that there are many more models, even beyond continuum mechanics, where Algorithm \eqref{ncbregman} is 
very robustly applicable with great impact. We mention, for instance, that the solution of minimization problems of functionals of the type \eqref{FM2d}, by means 
of Algorithm \eqref{ncbregman}, where the constraint $A$ matrix is a compressed sensing matrix, has been used in \cite{arfopeXX} to 
greatly outperform $\ell_1$-minimization as a decoding procedure in case of noise on the signal prior to the
measurement via $A$, reducing significantly the so-called {\it noise-folding phenomenon}. 
\\

With the scope of clarifying in detail the applicability of Algorithm \eqref{ncbregman} and because of the relevance of the truncated
quadratic potential $W_r^2$ in so many applications (image processing, quasi-static evolutions of brittle fractures, compressed sensing,
etc.), we focus on the application of Algorithm \eqref{ncbregman} to problems of the type \eqref{specfunc}
for $U_k = W_r^p$, where $W_r^p(t) = \min \{|t|^p, r^p\}$, for $r>0$, $p\geq 1$, and any $t \in \mathbb R$.
In particular, we shall discuss in detail  the role of the coercivity of the objective functionals as well as
 how the main conditions (A1) and (A2)  can be verified in practice.

\subsection{Truncated polynomial minimization}\label{truncated}

First of all, we should mention that, independently of the choice of the linear operators $T$ and $A$, by \cite[Theorem 2.3]{fowa10}, the constrained minimization problem
\begin{equation} \label{minprob}
\hbox{Minimize } \J_p(v) = \| T v - g \|^2 + \gamma \sum_{i=1}^m W_r^p(v_i), \hbox{ Subject to } A v = f,
\end{equation} 
has always global minimizers. Notice that the proof of existence of minimizers is far from being trivial (see Remark \ref{exrem} below), since the problem is in general not coercive. 
Concerning uniqueness and stability of minimizers, we refer instead to the work of Durand and Nikolova \cite{duni06, duni06-1}, about cases where $T$ is injective on $\ker A$.
\begin{remark}\label{exrem}
The proof of existence of solutions of \eqref{minprob} is based on a special orthogonal decomposition of certain convex sets, see \cite[Appendix, Section 8.1]{fowa10}. Let us report the main fact, which it will turn out to be useful to us again later in this paper.
\\
Define $\bar \J_p(v) = \| T v - g \|^2 + \gamma \sum_{i=1}^m c_i |v_i|^p$ for $c_1, \dots c_m$ scalars; notice that we allow some of them to be negative or zero, as soon as $\bar \J_p(v) \geq C_{\inf} > - \infty$ for all $v \in \EE$.
 Then for any constant $C >0$ and any polyhedral convex set $X \subset \EE$,  there exists a linear subspace $\mathcal V= \mathcal V_{X,C} \subset  \EE$, such that the orthogonal projection $X^\perp$ of $X$ onto $\mathcal V^\perp$ has the properties 
\begin{itemize}
\item $X = \{ x =  x^\perp \oplus t v: x^\perp \in X^\perp, v \in \mathcal V, t \in \mathbb R^+\}$,
\item $M_C = X^\perp  \cap \{ v \in \EE: \bar \J_p(v) \leq C\}$ is compact, and 
\item $\bar \J_p (\xi_t)$ is constant along rays $\xi_t=x^\perp \oplus t v$, where $x^\perp \in M_C$, $v \in  \mathcal V$, and $ t \in \mathbb R^+$. 
\end{itemize}
For $\mathcal I_0 \subset \mathcal I$ and $\mathcal U_{\mathcal I_0} :=\{ v \in \EE: |v_i| \leq r, i \in \mathcal I_0 \mbox{ and }  |v_i|> r, i \in \mathcal I \setminus \mathcal I_0 \}$,
in particular this result applies on $X=  \mathcal F(f)\cap \overline{\mathcal U_{\mathcal I_0}}$, hence
\begin{equation} \label{minprob2}
\hbox{Minimize } \bar \J_p(v) = \| T v - g \|^2 + \gamma \sum_{i=1}^m c_i |t|^p, \hbox{ Subject to } A v = f \mbox{ and } v \in \mathcal U_{\mathcal I_0},
\end{equation} 
has solutions in $\EE$, actually in the compact set $M_{\bar \J_p(v^0)} = X^\perp  \cap \{ v \in \EE: \bar \J_p(v) \leq \bar \J_p(v^0)\}$, for any $v^0 \in \EE$.
\end{remark}

\subsection{Issues about the applicability of the algorithm}\label{issuesapp}

Due to the nonsmoothness and nonconvexity of $\J_p$, the more general linearly  constrained minimization \eqref{minprob} has been so far an open problem, as standard methods, such as SQP and Newton methods, do not apply, unless one provides a $C^2$-regularization of the problem. In particular, it would be desirable that an appropriate algorithm performing 
such an optimization could retain both the simplicity of the thresholding iteration and its unconditional convergence properties, as given by \cite[Theorem 4.8]{fowa10} in the case of unconstrained minimization of the functional $\mathcal J_p$. Certainly the method \eqref{ncbregman} is a strong candidate,
as the iterations of its inner loop actually requires only a unconstrained minimization, which can be again addressed by iterative thresholding, see Section \ref{innerthr} below. However, we encounter two major bottlenecks to the direct application of this algorithm to \eqref{minprob}. The first problem is that $\J_p$ does not satisfy our main assumption (A1), i.e., it is not $\omega$-semi-convex, as it is not a $C^1$-perturbation of a convex functional. In fact the term $W_r^p$ is too rough at the kink where the truncation applies. The second trouble
comes by the lack of coerciveness of  $\J_p$ on the affine space $\F( f)$ in general, for a generic choice of $T$. A general convergence result (Theorem \ref{Msmainthm}) will be therefore available only under an additional condition on $T$.

\subsection{A smoothing method}\label{smoothapp}

In this section we would like to construct an appropriate {\it slightly smoother} perturbation $\J_p^\varepsilon$ of $\J_p$, which allows eventually for $\omega$-semi-convexity, but does not modify essentially the minimizers over $\F(f)$. 
Such modification will not affect the possibility of using thresholding functions in the numerical setting, although instead of the hard-type discontinuous thresholding encountered in the unconstrained case, as in  \cite[Proposition 4.3]{fowa10} and \cite{fowa10}[Figure 2], our new thresholding function will be a Lipschitz one, as an effect of the introduced regularization, in dependence of the choice of the parameters $\gamma, \varepsilon, r, \omega$ in appropriate ranges. We will see in Section \ref{innerthr} the usefulness of this feature in terms of guaranteed exponential convergence from the beginning of the iterations.

We start by the following polynomial interpolation result.
\begin{lemma}\label{lem2}
Let $0<s_1<s_2$ and assume that
$$
\pi(t) := A(t-s_2)^3 +B(t-s_2)^2 + C,
$$
is a third degree polynomial. Given $\gamma_1,\gamma_2,\gamma_3 \in \mathbb R$ and by setting
\begin{equation}\label{Bdefin}
\left \{ 
\begin{array}{l}
C=\gamma_3, \\
B=\frac{\gamma_1}{s_2-s_1} - \frac{3 (\gamma_3 - \gamma_2)}{(s_2 -s_1)^2},\\
A=\frac{\gamma_1}{3(s_2-s_1)^2} + \frac{2 B}{3(s_2 -s_1)},
\end{array}
\right .
\end{equation}
then we have the following interpolation properties
\begin{equation}
\left \{ 
\begin{array}{ll}
\pi(s_2) = \gamma_3, & \pi(s_1) = \gamma_2, \\
\pi'(s_2) = 0, & \pi'(s_1) = \gamma_1.
\end{array}
\right .
\end{equation}
\end{lemma}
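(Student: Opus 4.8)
The plan is to verify the four interpolation conditions directly by substituting the given coefficients from \eqref{Bdefin} into $\pi$ and its derivative $\pi'$. Since $\pi(t) = A(t-s_2)^3 + B(t-s_2)^2 + C$ is written in a Taylor-like form centered at $s_2$, the two conditions at $s_2$ are essentially immediate, and the two conditions at $s_1$ reduce to a small linear system which the formulas for $A$ and $B$ are designed to solve.

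First I would dispatch the conditions at $t = s_2$. Evaluating directly, $\pi(s_2) = A\cdot 0 + B \cdot 0 + C = C = \gamma_3$, which matches. Differentiating gives $\pi'(t) = 3A(t-s_2)^2 + 2B(t-s_2)$, so $\pi'(s_2) = 0$ automatically; this is precisely why the cubic was centered at $s_2$, and no use of the coefficient formulas is needed here. This confirms the two right-hand conditions with essentially no computation.

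Next I would treat the two conditions at $t = s_1$. Writing $\Delta := s_2 - s_1 > 0$, we have $s_1 - s_2 = -\Delta$, so $\pi(s_1) = -A\Delta^3 + B\Delta^2 + C$ and $\pi'(s_1) = 3A\Delta^2 - 2B\Delta$. The condition $\pi'(s_1) = \gamma_1$ reads $3A\Delta^2 - 2B\Delta = \gamma_1$, which is exactly the formula $A = \frac{\gamma_1}{3\Delta^2} + \frac{2B}{3\Delta}$ solved for $A$; so this condition holds by the definition of $A$. For $\pi(s_1) = \gamma_2$, I would substitute $C = \gamma_3$ and the expression for $A$, obtaining $-\bigl(\frac{\gamma_1}{3\Delta^2} + \frac{2B}{3\Delta}\bigr)\Delta^3 + B\Delta^2 + \gamma_3 = \gamma_2$. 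Simplifying yields $-\frac{\gamma_1 \Delta}{3} - \frac{2B\Delta^2}{3} + B\Delta^2 = \gamma_2 - \gamma_3$, i.e. $\frac{B\Delta^2}{3} = \gamma_2 - \gamma_3 + \frac{\gamma_1\Delta}{3}$, and solving for $B$ recovers exactly $B = \frac{\gamma_1}{\Delta} - \frac{3(\gamma_3-\gamma_2)}{\Delta^2}$, as prescribed in \eqref{Bdefin}.

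There is no genuine obstacle here: the result is a routine Hermite interpolation verification, and the only mild care required is bookkeeping with the sign of $s_1 - s_2 = -\Delta$ and keeping the powers of $\Delta$ straight when substituting the formula for $A$ into the equation $\pi(s_1) = \gamma_2$. The structure of \eqref{Bdefin}, with $C$ fixed first, then $B$, then $A$ in terms of $B$, mirrors the natural order of solving the conditions (the $s_2$-conditions pin down $C$ and force the centering; the $s_1$-derivative condition expresses $A$ via $B$; and the remaining value condition at $s_1$ determines $B$), so the verification essentially amounts to reading \eqref{Bdefin} backwards.
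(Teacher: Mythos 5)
Your proof is correct and follows essentially the same route as the paper: the $s_2$-conditions are immediate from the centered form, and the $s_1$-conditions are verified by direct substitution of the formulas for $A$, $B$, $C$ (you phrase the value condition as recovering the formula for $B$, the paper substitutes $B$ to land on $\gamma_2$, but these are the same computation read in opposite directions). No gaps.
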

\begin{proof}
The equalities related to $s_2$ are straightforward, the others related to $s_1$ follow by simple direct computations:
\begin{eqnarray*}
\pi(s_1) &=& -\frac{\gamma_1}{3}(s_2-s_1) - \frac{2}{3} B(s_2-s_1)^2 + B(s_2-s_1)^2 + \gamma_3\\
&=& -\frac{\gamma_1}{3}(s_2-s_1) +  \frac{B}{3} (s_2-s_1)^2 + \gamma_3 \\
&=& -\frac{\gamma_1}{3}(s_2-s_1) + \frac{\gamma_1}{3}(s_2-s_1) - (\gamma_3 - \gamma_2) + \gamma_3 = \gamma_2,
\end{eqnarray*}
and
\begin{eqnarray*}
\pi'(s_1) &=& 3 A (s_1-s_2)^2 +2 B(s_1-s_2)\\
&=& \gamma_1 - 2 B(s_1-s_2) + 2B (s_1 - s_2 ) = \gamma_1.
\end{eqnarray*}
\end{proof}

Given $0<\varepsilon<r$ for every $t \in [r-\varepsilon, r+\varepsilon]$ we define $\pi_p(t) =\pi(t)$ as in Lemma \ref{lem2} for $s_1 = (r-\varepsilon)$, $s_2=(r+\varepsilon)$, $\gamma_1 = p(r-\varepsilon)^{p-1}$,
$\gamma_2 = (r-\varepsilon)^p$, and $\gamma_3 = r^p$. For example, for $p=2$, we have
$$
\pi_2(t) = \frac{[t+(r-\varepsilon)][\varepsilon(r+t)-(r-t)^2]}{4 \varepsilon}, \quad t \in \mathbb R.
$$

\begin{figure}[htp]
\begin{center} 
\includegraphics[width=4.2in]{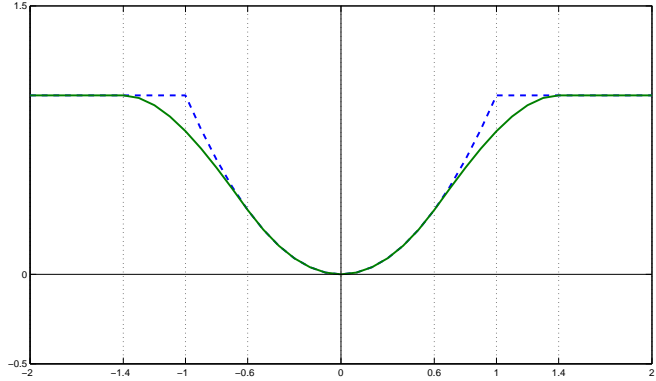}
\end{center}
\caption{Truncated quadratic potential $W_r^p$ and its regularization $W_r^{p,\varepsilon}$, for $p=2$, $r=1$, and $\varepsilon=0.4$.}\label{trunc}
\end{figure}

Let us now set,  for all $t \geq 0$,
\begin{equation}\label{epsW}
W_r^{p,\varepsilon}(t) = 
\left \{
\begin{array}{ll}
t^p, & t \leq r-\varepsilon,\\
\pi_p(t), & r-\varepsilon \leq t \leq r+\varepsilon,\\
r^p & t \geq r+\varepsilon,
\end{array}
\right .
\end{equation}
whereas for $t \leq 0$, we define $W_r^{p,\varepsilon}(t) = W_r^{p,\varepsilon} (-t)$. Notice that now $ W_r^{p,\varepsilon}$ is actually a $C^1$-function of $\mathbb R$, for all $0<\varepsilon <r$. This smoothing is reminiscent of the approach to graduated nonconvexity proposed in the seminal work \cite{BZ}.

Thanks to the function $W_r^{p,\varepsilon}$, we can define the following perturbation of $\J_p$ 
\begin{equation} \label{pfunceps}
\J_p^\varepsilon(v) = \| T v - g \|^2 + \gamma \sum_{i=1}^m W_r^{p,\varepsilon}(v_i).
\end{equation}
About the existence of constrained minimizers of $\J_p^\varepsilon$ we have the following abstract result, whose proof is again shifted to the Appendix. We stress that the result holds for all inverse free-discontinuity problems, as it requires no further assumptions on $T$.
\begin{theorem}\label{existthm}
For $0 \leq \varepsilon < r $, the problem
\begin{equation} \label{minprob3}
\hbox{Minimize } \J_p^\varepsilon(v) = \| T v - g \|^2 + \gamma \sum_{i=1}^m W_r^{p,\varepsilon}(v_i), \hbox{ Subject to } A v = f,
\end{equation} 
has solutions in $\EE$. Actually, such minimal solutions can be taken in a compact set $M \subset \EE$ independent of $\varepsilon$.
\end{theorem}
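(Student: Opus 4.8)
The plan is to dispose of the only genuine difficulty here, the lack of coercivity of $\J_p^\varepsilon$ on $\F(f)$, by the very orthogonal-decomposition device recorded in Remark \ref{exrem}, after checking that the mild $C^1$-smoothing carried by $W_r^{p,\varepsilon}$ does not spoil the structure that made it work for $W_r^p$. First I would cover the feasible set by finitely many polyhedral convex pieces on which $\J_p^\varepsilon$ takes a tractable form. Writing $\mathcal I=\{1,\dots,m\}$, for $\mathcal I_0\subseteq\mathcal I$ and a choice of signs $\sigma_i\in\{\pm1\}$, $i\in\mathcal I\setminus\mathcal I_0$, set
$$
P_{\mathcal I_0,\sigma}:=\F(f)\cap\{v\in\EE:\ |v_i|\le r+\varepsilon\ (i\in\mathcal I_0),\ \sigma_i v_i\ge r+\varepsilon\ (i\in\mathcal I\setminus\mathcal I_0)\}.
$$
These finitely many closed convex sets cover $\F(f)$, and on each of them $|v_i|\ge r+\varepsilon$ forces $W_r^{p,\varepsilon}(v_i)=r^p$, while the coordinates $i\in\mathcal I_0$ are confined to $|v_i|\le r+\varepsilon$, so that
$$
\J_p^\varepsilon(v)=\|Tv-g\|^2+\gamma\sum_{i\in\mathcal I_0}W_r^{p,\varepsilon}(v_i)+\gamma\,(m-|\mathcal I_0|)\,r^p.
$$

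To each such piece I would attach the $\varepsilon$-free comparison polynomial $\bar\J_p(v)=\|Tv-g\|^2+\gamma\sum_{i\in\mathcal I_0}|v_i|^p$ and invoke Remark \ref{exrem} with $X=P_{\mathcal I_0,\sigma}$, obtaining the subspace $\mathcal V$, the projected slice $X^\perp$, and the compact sublevel set $M_C=X^\perp\cap\{\bar\J_p\le C\}$. The key point is that $\J_p^\varepsilon$ is itself constant along the recession rays generating $\mathcal V$: such directions $v$ satisfy $Tv=0$ and $v_i=0$ for $i\in\mathcal I_0$, and point into the recession cone of $P_{\mathcal I_0,\sigma}$ (so $\sigma_i v_i\ge0$ for $i\in\mathcal I\setminus\mathcal I_0$). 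Along the corresponding rays the fidelity term is frozen, the smooth penalty $\sum_{i\in\mathcal I_0}W_r^{p,\varepsilon}(v_i)$ involves only frozen coordinates, and the saturated coordinates stay in $|v_i|\ge r+\varepsilon$, i.e.\ $W_r^{p,\varepsilon}=r^p$. Hence $\inf_{P_{\mathcal I_0,\sigma}}\J_p^\varepsilon=\inf_{X^\perp}\J_p^\varepsilon$, exactly as for the hard truncation.

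It then remains to gain attainment on the slice by sandwiching $\J_p^\varepsilon$ against $\bar\J_p$. On $X\supseteq X^\perp$ the coordinates $i\in\mathcal I_0$ obey $|v_i|\le r+\varepsilon<2r$, whence $0\le|v_i|^p-W_r^{p,\varepsilon}(v_i)\le(2r)^p$ and therefore $\bar\J_p-\J_p^\varepsilon\le\gamma m(2r)^p=:D$, a bound uniform in $\varepsilon$. Consequently $\{v\in X^\perp:\J_p^\varepsilon(v)\le C'\}\subseteq M_{C'+D}$ is compact; since $W_r^{p,\varepsilon}$ is continuous (indeed $C^1$), $\J_p^\varepsilon$ attains its minimum on $X^\perp$, hence on $P_{\mathcal I_0,\sigma}$. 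Minimizing over the finitely many pieces produces a global minimizer of $\J_p^\varepsilon$ on $\F(f)$, which settles the first assertion.

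For the uniform compact set $M$ I would fix any $v^0\in\F(f)$, so that every minimizer $\bar v^\varepsilon$ obeys the $\varepsilon$-independent bound $\J_p^\varepsilon(\bar v^\varepsilon)\le\J_p^\varepsilon(v^0)\le\|Tv^0-g\|^2+\gamma m r^p=:C_0$; choosing $\bar v^\varepsilon$ in the slice $X^\perp$ of its piece (legitimate by the constancy along $\mathcal V$) places it in $M_{C_0+D}$. The decisive and most delicate point is that the subspaces $\mathcal V$ and the polynomial $\bar\J_p$ are entirely $\varepsilon$-independent, being fixed by $\mathcal I_0$, the signs $\sigma$, $\ker A$, $\ker T$ and the recession cones, whereas the sole $\varepsilon$-dependence, the thresholds $r+\varepsilon$, stays confined to $[r,2r)$ because $0\le\varepsilon<r$. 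Exploiting the $\varepsilon$-uniform coercivity of $\bar\J_p$ on each $\mathcal V^\perp$ together with the uniform level $C_0+D$, the slice-norms of the minimizers are bounded by a constant $R$ independent of $\varepsilon$, and one may take $M=\overline{B(0,R)}\cap\F(f)$. I expect this last uniformity to be the main obstacle, as it is precisely where the geometry of the $\varepsilon$-shifted pieces must be controlled; everything else reduces to the decomposition already established for the truncated polynomial.
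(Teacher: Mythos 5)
Your proposal is correct and its skeleton is the same as the paper's: cover $\F(f)$ by finitely many pieces according to which coordinates are saturated, apply the orthogonal ray decomposition of Remark \ref{exrem} on each piece, reduce the minimization to a compact slice, and take the best of the finitely many piecewise minimizers. Where you genuinely diverge is in how constancy along the recession rays is obtained. The paper never identifies the directions of $\mathcal V$; instead it replaces the penalty on each piece by an auxiliary potential $\bar W_r^{p,\varepsilon}$ whose tail is $|t-\varepsilon|^p$ rather than the constant $r^p$, proves a separate ``technical observation'' (a function of polynomial type that is bounded on a ray is constant there), and transfers constancy from $\bar\J_p$ to $\bar\J_p^\varepsilon$ via the uniform estimate $|\bar\J_p^\varepsilon-\bar\J_p|\le\Gamma(\varepsilon)$. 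You instead read off from the constancy of $\bar\J_p$ that every $v\in\mathcal V$ satisfies $Tv=0$ and $v_i=0$ on the unsaturated coordinates, so that $\J_p^\varepsilon$ itself is frozen along the rays; this eliminates $\bar W_r^{p,\varepsilon}$ and the $\Gamma(\varepsilon)$ bookkeeping entirely, at the cost of having to record the short asymptotic argument behind that characterization of $\mathcal V$ (which is the same computation as the paper's technical observation, performed once for $\bar\J_p$ rather than for every $\bar\J_p^\varepsilon$). Your extra splitting by signs also makes each piece genuinely polyhedral convex, which is what Remark \ref{exrem} formally requires, whereas the paper applies it to $\F(f)\cap\overline{\mathcal U_{\mathcal I_0^j}}$, which is not convex. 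The step you flag as delicate --- the $\varepsilon$-uniformity of $M$, where the slice $X^\perp$ itself moves with the threshold $r+\varepsilon$ --- is treated no more carefully in the paper (which only addresses the dependence through the level $C$), so your sketch via the $\varepsilon$-free subspace $\mathcal V$ and comparison function $\bar\J_p$, with the thresholds confined to $[r,2r)$, is an acceptable and arguably more explicit resolution, though it is the one place where a full write-up would still need to do real work.
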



\begin{remark}\label{quasiequicoerc}
The previous result clarifies that, despite the fact that in general $\J_p^\varepsilon$ are not coercive functionals, up to restricting them 
to an appropriate compact set, independent of $\varepsilon$, they can be considered equi-coercive.
\end{remark}

\begin{corollary}\label{gammaconv}
The net of functionals $( \J_p^\varepsilon)_{0 \leq \varepsilon <r}$ $\Gamma$-converges to $\J_p$ on $\mathcal F(f)$. Moreover, if 
we consider the net of minimizers $v^*_\varepsilon$ of $\J_p^\varepsilon$ in $M$ for $0 \leq \varepsilon <r$, as constructed in Theorem \ref{existthm} (which are actually minimizers of  $\J_p^\varepsilon$ over $\mathcal F(f)$ as well), then the accumulation points of such a net are minimizers of $ \J_p$.
\end{corollary}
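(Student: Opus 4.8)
The plan is to reduce the whole statement to the single elementary fact that the scalar regularizations converge \emph{uniformly}, $W_r^{p,\varepsilon} \to W_r^p$ on $\mathbb R$ as $\varepsilon \to 0$, and then to feed this into the fundamental theorem of $\Gamma$-convergence, using the equi-coercivity already supplied by Theorem \ref{existthm} and Remark \ref{quasiequicoerc}.

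First I would establish the uniform convergence $W_r^{p,\varepsilon} \to W_r^p$. Outside the transition band $[r-\varepsilon, r+\varepsilon]$ and its mirror image, the two functions coincide \emph{exactly} by the definition \eqref{epsW} and of $W_r^p$, so only the band matters. On it I would estimate the interpolation coefficients from \eqref{Bdefin}: since $s_2 - s_1 = 2\varepsilon$, $\gamma_3 - \gamma_2 = r^p - (r-\varepsilon)^p = O(\varepsilon)$ and $\gamma_1 = p(r-\varepsilon)^{p-1} = O(1)$, one finds $B = O(1/\varepsilon)$ and $A = O(1/\varepsilon^2)$. Because $|t - s_2| \leq 2\varepsilon$ throughout the band, the monomials obey $|B(t-s_2)^2| = O(\varepsilon)$ and $|A(t-s_2)^3| = O(\varepsilon)$, so $\pi_p(t) = C + O(\varepsilon) = r^p + O(\varepsilon)$ uniformly. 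As $W_r^p(t) = r^p + O(\varepsilon)$ on the same shrinking band, I conclude $\sup_{t \in \mathbb R}|W_r^{p,\varepsilon}(t) - W_r^p(t)| = O(\varepsilon) \to 0$.

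Since the fidelity term $\|Tv - g\|^2$ is common to $\J_p^\varepsilon$ and $\J_p$ and the penalty is a sum of $m$ scalar terms, this gives $\sup_{v \in \EE}|\J_p^\varepsilon(v) - \J_p(v)| \leq \gamma m \sup_t |W_r^{p,\varepsilon} - W_r^p| \to 0$, i.e. uniform convergence on all of $\EE$, a fortiori on $\mathcal F(f)$. Uniform convergence of these continuous functionals yields $\Gamma$-convergence on $\mathcal F(f)$ directly: for the $\limsup$ inequality the recovery net for any $v \in \mathcal F(f)$ is simply the constant net $v_\varepsilon \equiv v$ (pointwise convergence suffices); for the $\liminf$ inequality, any net $v_\varepsilon \to v$ in $\mathcal F(f)$ satisfies $\J_p^\varepsilon(v_\varepsilon) \ge \J_p(v_\varepsilon) - \sup_w|\J_p^\varepsilon - \J_p|$, and letting $\varepsilon \to 0$ the continuity of $\J_p$ and the vanishing of the sup give $\liminf_\varepsilon \J_p^\varepsilon(v_\varepsilon) \ge \J_p(v)$.

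Finally, for the convergence of minimizers I would run the standard argument, the extra ingredient being equi-coercivity: all $v^*_\varepsilon$ lie in the fixed compact set $M$, so any accumulation point $v = \lim_j v^*_{\varepsilon_j}$ exists and lies in the closed set $M \cap \mathcal F(f)$. The $\liminf$ inequality gives $\J_p(v) \leq \liminf_j \J_p^{\varepsilon_j}(v^*_{\varepsilon_j})$, while minimality of $v^*_{\varepsilon_j}$ together with $\J_p^{\varepsilon_j}(w) \to \J_p(w)$ for any competitor $w \in \mathcal F(f)$ gives $\liminf_j \J_p^{\varepsilon_j}(v^*_{\varepsilon_j}) \leq \J_p(w)$; chaining the two shows $v$ minimizes $\J_p$ over $\mathcal F(f)$. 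I expect the only genuinely technical point to be the uniform control of the cubic $\pi_p$ in the first step: the coefficients $A,B$ blow up as $\varepsilon \to 0$, and one must check that this is exactly compensated by the shrinking width of the band so that the deviation from $r^p$ stays $O(\varepsilon)$. Everything after that is either immediate from the definitions or the textbook $\Gamma$-convergence/equi-coercivity package.
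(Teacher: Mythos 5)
Your proposal is correct and takes essentially the same route as the paper's own (very terse) proof: uniform convergence of $\J_p^\varepsilon$ to $\J_p$ on $\mathcal F(f)$ yields $\Gamma$-convergence, and the equi-coercivity supplied by the fixed compact set $M$ of Theorem \ref{existthm} gives convergence of minimizers via the standard argument. The only difference is that you spell out the details the paper delegates to the reader and to the reference \cite{DM} --- in particular the $O(\varepsilon)$ control of the interpolating cubic on the shrinking band, which you verify correctly.
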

\begin{proof}
As $\J_p^\varepsilon$ converges uniformly to $\J_p$ on $\mathcal F(f)$, we deduce immediately its  $\Gamma$-convergence \cite{DM}. By Theorem \ref{existthm} and compactness of $M$ we conclude the convergence of minimizers.
\end{proof}

\begin{proposition}\label{condverified}
For all $0<\varepsilon<r$, the functional $\J_p^\varepsilon$ satisfies the properties (A1) and (A2), i.e., it is $\omega$-semi-convex, and \eqref{strana} holds.
\end{proposition}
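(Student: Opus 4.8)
The plan is to exploit the structure of $\J_p^\varepsilon$ as the sum of the smooth convex fidelity term $\|Tv-g\|^2$ and a \emph{separable} penalty $\gamma\sum_{i=1}^m W_r^{p,\varepsilon}(v_i)$, so that both (A1) and (A2) reduce to one-dimensional statements about the single scalar profile $W_r^{p,\varepsilon}$. Since $v\mapsto\|Tv-g\|^2$ is convex and of class $C^\infty$, and since $\omega\|v\|^2=\omega\sum_i v_i^2$ splits over components exactly as the penalty does, it suffices to find a single $\omega_0>0$, depending only on $p,r,\varepsilon$, such that the scalar map $t\mapsto W_r^{p,\varepsilon}(t)+\omega_0 t^2$ is convex; then $\J_p^\varepsilon(\cdot)+\gamma\omega_0\|\cdot\|^2$ is convex as a sum of convex functions, giving (A1) with $\omega=\gamma\omega_0$.

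To produce such an $\omega_0$ I would inspect the second derivative of $W_r^{p,\varepsilon}$ piecewise according to \eqref{epsW}. On $|t|\ge r+\varepsilon$ the function is constant, so $(W_r^{p,\varepsilon})''=0$; on $0<|t|\le r-\varepsilon$ it equals $|t|^p$, whose second derivative $p(p-1)|t|^{p-2}$ is nonnegative for $p\ge1$, so it is convex there (the possible upward corner at the origin for $p=1$ is convex and hence harmless, while for $p>1$ the profile is genuinely $C^1$ as observed after \eqref{epsW}); the only region where concavity can occur is the compact interpolation interval $[r-\varepsilon,r+\varepsilon]$, where, by Lemma \ref{lem2}, $W_r^{p,\varepsilon}=\pi_p$ is a cubic, so $\pi_p''$ is affine and in particular bounded. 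Consequently $(W_r^{p,\varepsilon})''$ admits a finite lower bound $-2\omega_0$ wherever it is defined, and choosing $\omega_0$ accordingly yields the desired convexity; the explicit value can be read off from the coefficients $A,B$ furnished by \eqref{Bdefin}.

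For (A2) I would first record that, by construction, the derivative $(W_r^{p,\varepsilon})'$ is \emph{uniformly bounded}: it vanishes for $|t|\ge r+\varepsilon$, equals $p|t|^{p-1}\,\sign(t)$ of modulus at most $p(r-\varepsilon)^{p-1}$ for $|t|\le r-\varepsilon$, and is continuous on the compact transition interval (for $p=1$ the subdifferential at the origin is merely the bounded set $[-1,1]$). Hence there is a constant $C_W$ with $|\eta|\le C_W$ for every $\eta\in\partial W_r^{p,\varepsilon}(t)$ and every $t$. Using the decomposition \eqref{2469}, any $\xi\in\partial\J_p^\varepsilon(v)$ has the form $\xi=2T^*(Tv-g)+\gamma\zeta$ with $\zeta$ collecting one component from each $\partial W_r^{p,\varepsilon}(v_i)$, so $\|\gamma\zeta\|\le\gamma C_W\sqrt{m}$. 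The fidelity part is controlled by the energy itself: since the penalty is nonnegative, $\|Tv-g\|^2\le\J_p^\varepsilon(v)$, whence $\|2T^*(Tv-g)\|\le 2\|T\|\sqrt{\J_p^\varepsilon(v)}$.

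Combining these bounds and using $\sqrt{x}\le\tfrac12(1+x)$ to linearize the square root, I obtain $\|\xi\|\le\|T\|\,\J_p^\varepsilon(v)+\big(\|T\|+\gamma C_W\sqrt{m}\big)$, which is exactly \eqref{strana} with $K=\|T\|$ and $L=\|T\|+\gamma C_W\sqrt{m}$. By Remark \ref{note}(b) it is in fact enough to verify this inequality at points of differentiability, which slightly simplifies the bookkeeping for $p=1$. The only genuinely delicate point is the curvature analysis of the middle paragraph: one must ensure that the affine function $\pi_p''$ is bounded below uniformly enough to yield a usable $\omega_0$, and that the even extension together with the $p=1$ corner at the origin does not spoil semi-convexity; both are settled by the observations above, so the remaining computations are routine.
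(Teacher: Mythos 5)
Your proof is correct and follows essentially the same route as the paper: semi-convexity from the piecewise boundedness of $(W_r^{p,\varepsilon})''$ (reduced to the scalar profile, with the cubic interpolation interval as the only concave region), and \eqref{strana} from the uniform bound on $(W_r^{p,\varepsilon})'$ together with the elementary inequality $a\le\tfrac12(a^2+1)$ applied to $\|Tv-g\|$, yielding the same constants $K=\|T\|$ and $L=\|T\|+\gamma\sqrt{m}\,C_W$. The only difference is that you spell out the curvature analysis and the $p=1$ corner in more detail than the paper, which simply asserts these points.
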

\begin{proof}
The $\omega$-semi-convexity follows from the piecewise continuity and boundedness of the second derivatives of $\J_p^\varepsilon$. Since $W_r^{p,\varepsilon}(t)\ge 0$ and $|(W_r^{p,\varepsilon})'(t) | \leq p r^{p-1}$ for every $t \in \R$, by means of the elementary inequality $a\le \tfrac12 (a^2+1)$ we obtain
\begin{eqnarray}
 \| \nabla \J_p^\varepsilon(v)\| &\leq& 2 \| T^*(T v - g)\| + \gamma \| ((W_r^{p,\varepsilon})'(v_1), \dots, (W_r^{p,\varepsilon})'(v_m))\| \nonumber\\
&\leq & 2 \| T^*\| \| T v - g \| + \gamma   m^{1/2} p r^{p-1} \label{bndTv}\\
&\leq & \| T^*\| \| T v - g \|^2 + \| T^*\| + \gamma   m^{1/2} p r^{p-1} \nonumber\\
&\leq & \| T^*\| \J_p^\varepsilon(v) + \| T^*\| + \gamma   m^{1/2} p r^{p-1} \nonumber\,.
\end{eqnarray}
Hence, for $K=\| T^*\|$ and $L=\| T^*\| + \gamma   m^{1/2} p r^{p-1}$, we get that \eqref{strana} holds
for $\J_p^\varepsilon$. 
\end{proof}

\subsection{The application of the algorithm to coercive cases}\label{Msh}

As we clarified in the previous section, functionals of the type $\mathcal J_p^\varepsilon$, for $0 < \varepsilon <r$, satisfy the assumptions (A1) and (A2) for the applicability
of the algorithm \eqref{ncbregman}. In particular, when the algorithm is applied for $\mathcal J = \mathcal J_p^\varepsilon$, then by Theorem \ref{chiave} the sequence  $(v_\ell)_{\ell \in \mathbb N}$ generated by the algorithm has the 
properties
\begin{itemize}
 \item[(a)] $(Av_\ell-f) \to 0$ as $\ell \to \infty$;
 \item[(b)] $(v_\ell-v_{\ell-1}) \to 0$ as $\ell \to \infty$.
\end{itemize}
However $\mathcal J_p^\varepsilon$ is unfortunately not necessarily coercive on $\mathcal F(f) =\{v \in \EE: A v = f\}$, 
although it retains some coerciveness by considering suitable compact subsets $M$ of competitors, see Theorem \ref{existthm}. 
Nevertheless, such information does not help when it comes to the application of the algorithm \eqref{ncbregman}, as there is no natural or simple way of restricting or projecting the iterations
to such compact sets $M$. 
Hence, in order to apply Theorem \ref{mainthm}, we need to explore the mechanism for which the iterations $(v_\ell)_{\ell \in \mathbb N}$ generated by the algorithm
keep bounded. We show that this is the case where $T$ is injective on $\ker A$, since this allows us to recover the coerciveness we need.

Let us first introduce some specific notation for the application of the algorithm \eqref{ncbregman}, in particular we denote
\begin{equation}\label{omconv}
\mathcal J_{\omega,u}(v) := \mathcal J_{p,\omega,u}^\varepsilon (v)= \mathcal J_p^\varepsilon(v) + \omega \|v-u\|^2.
\end{equation}

\begin{lemma}\label{gllip}
For all $0 < \varepsilon < r$,  the sequence $(\| \nabla \J_p^\varepsilon(v_\ell)\|)_{\ell \in \mathbb N}$ is uniformly bounded, where the iterations $(v_\ell)_{\ell \in \mathbb N}$ are generated by the algorithm \eqref{ncbregman}.
\end{lemma}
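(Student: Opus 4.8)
The plan is to reduce the uniform boundedness of $\|\nabla \J_p^\varepsilon(v_\ell)\|$ first to the uniform boundedness of the residual $\|Tv_\ell-g\|$, and then to control that residual through the auxiliary minimizers $\bar v_\ell$ introduced in \eqref{barv}. The starting point is the elementary estimate \eqref{bndTv} already proved inside Proposition \ref{condverified}, namely $\|\nabla\J_p^\varepsilon(v)\|\le 2\|T^*\|\,\|Tv-g\|+\gamma m^{1/2}pr^{p-1}$ for every $v\in\EE$. Since the additive term is a fixed constant once $\varepsilon,r,p,m,\gamma$ are fixed, it suffices to show that $\sup_\ell\|Tv_\ell-g\|<+\infty$, so the whole lemma is a statement about the fidelity term and the nonlinear penalty never needs to be touched again.

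First I would invoke the proof of Theorem \ref{chiave}, which establishes, under the sole hypotheses (A1) and (A2) and \emph{before} any coercivity is used, both the convergence \eqref{456} of $(\J_p^\varepsilon(\bar v_\ell))_{\ell}$ and the vanishing estimate \eqref{17853} giving $\|v_\ell-\bar v_\ell\|\to 0$. Consequently $(\J_p^\varepsilon(\bar v_\ell))_\ell$ is bounded and $(\|v_\ell-\bar v_\ell\|)_\ell$ is bounded. The key transfer step is then immediate: since $W_r^{p,\varepsilon}\ge 0$, the penalty part of $\J_p^\varepsilon$ is nonnegative, whence $\|T\bar v_\ell-g\|^2\le \J_p^\varepsilon(\bar v_\ell)$, so $(\|T\bar v_\ell-g\|)_\ell$ is uniformly bounded by \eqref{456}. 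The triangle inequality and continuity of $T$ give
$$\|Tv_\ell-g\|\le \|T\bar v_\ell-g\|+\|T\|\,\|v_\ell-\bar v_\ell\|,$$
and the right-hand side is a sum of two uniformly bounded sequences. Substituting the resulting bound into \eqref{bndTv} concludes the argument.

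The only genuinely delicate point is logical rather than computational: one is tempted to bound $\J_p^\varepsilon(v_\ell)$ by coercivity together with boundedness of $(v_\ell)_\ell$, but neither is available here — indeed this lemma is precisely the tool used \emph{afterwards} to recover that boundedness. The resolution is to avoid estimating $v_\ell$ itself and to bound only the quantity that actually enters the gradient estimate, namely $\|Tv_\ell-g\|$; this is legitimate exactly because $W_r^{p,\varepsilon}$ is a globally bounded function, so the nonconvex penalty contributes nothing to $\nabla\J_p^\varepsilon$ beyond the fixed constant $\gamma m^{1/2}pr^{p-1}$. Before writing the final version I would only verify carefully that \eqref{456} and \eqref{17853} are indeed derived in the proof of Theorem \ref{chiave} prior to the line where coercivity is invoked, which is the case; everything else is routine, and all constants are allowed to depend on the fixed parameter $\varepsilon$.
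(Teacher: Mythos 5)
Your proposal is correct and follows essentially the same route as the paper: both arguments combine the gradient estimate \eqref{bndTv} with the boundedness of $\J_p^\varepsilon(\bar v_\ell)$ from \eqref{456} and the vanishing of $\|v_\ell-\bar v_\ell\|$ from \eqref{17853} to control $\|Tv_\ell-g\|$, and both correctly exploit that these two facts are established in the proof of Theorem \ref{chiave} before coercivity is ever invoked. Your write-up merely spells out the triangle-inequality transfer step that the paper leaves implicit.
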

\begin{proof}
As a consequence of \eqref{456} the sequence $(\|T \bar v_\ell \|)_\ell$, where $\bar v_\ell$ is defined in \eqref{barv}, is uniformly bounded. From \eqref{17853}, we have also that $(\|T v_\ell \|)_\ell$ is uniformly bounded. 
As pointed out in \eqref{bndTv} of Proposition \ref{condverified} actually we have $\| \nabla \J_p^\varepsilon(v_\ell)\| \leq 2 \| T^*\| \| T v_\ell - g \| + \gamma   m^{1/2} p r^{p-1}$. Hence  the sequence $(\| \nabla \J_p^\varepsilon(v_\ell)\|)_{\ell \in \mathbb N}$ is uniformly bounded. 
\end{proof}

The next lemma will be crucial to show the convergence of the algorithm in our case.

\begin{lemma}\label{unbfAq} 
For all $0<\varepsilon<r$,  the sequence $(A^* q_{\ell,L_\ell-1})_{\ell \in \mathbb N}$ generated by the application
of the algorithm \eqref{ncbregman} for $\mathcal J = \mathcal J_p^\varepsilon$ is uniformly bounded.
\end{lemma}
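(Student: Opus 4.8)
The plan is to express $A^*q_{(\ell,L_\ell-1)}$ as a gradient and then bound its two pieces uniformly. Since $W_r^{p,\varepsilon}$ is $C^1$, the functional $\J_{\omega,v_{\ell-1}}=\J_p^\varepsilon(\cdot)+\omega\|\cdot-v_{\ell-1}\|^2$ is $C^1$ and its subdifferential is the singleton $\{\nabla\J_{\omega,v_{\ell-1}}\}$; hence by \eqref{inclu} and \eqref{2469}, for every $1\le k\le L_\ell$,
\[
A^*q_{(\ell,k)}=\nabla\J_{\omega,v_{\ell-1}}(v_{(\ell,k)})=\nabla\J_p^\varepsilon(v_{(\ell,k)})+2\omega(v_{(\ell,k)}-v_{\ell-1}).
\]
Taking $k=L_\ell-1$, it suffices to bound $\|\nabla\J_p^\varepsilon(v_{(\ell,L_\ell-1)})\|$ and $\|v_{(\ell,L_\ell-1)}-v_{\ell-1}\|$ uniformly. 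By \eqref{bndTv} the former reduces to bounding $\|Tv_{(\ell,L_\ell-1)}\|$, and both will follow once $\|v_{(\ell,L_\ell-1)}-\bar v_\ell\|$ is controlled, because the proof of Theorem \ref{chiave} already gives that $\|T\bar v_\ell\|$ is bounded (via \eqref{456}) and that $\|\bar v_\ell-v_{\ell-1}\|\to0$ (the vanishing $v_\ell-\bar v_{\ell+1}\to0$ deduced there from \eqref{29381} and \eqref{456}).

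To control $\|v_{(\ell,L_\ell-1)}-\bar v_\ell\|$ I would invoke Corollary \ref{caz} at $k=L_\ell-1$; the only delicate factor in \eqref{1785} is $(1+\|q_{\ell-1}\|)$, so the heart of the argument is to tame $\|q_{\ell-1}\|$. First I introduce exact multipliers $\bar q_\ell$ defined by $A^*\bar q_\ell=\nabla\J_{\omega,v_{\ell-1}}(\bar v_\ell)$: these exist because $\bar v_\ell$ is a constrained minimizer of a $C^1$-perturbation of a convex function, so \eqref{6529} places $\nabla\J_{\omega,v_{\ell-1}}(\bar v_\ell)$ in $\ran(A^*)$, and they are unique by injectivity of $A^*$. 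By \eqref{inj}, $\|\bar q_\ell\|\le\tfrac1\delta\|\nabla\J_{\omega,v_{\ell-1}}(\bar v_\ell)\|\le\tfrac1\delta(\|\nabla\J_p^\varepsilon(\bar v_\ell)\|+2\omega\|\bar v_\ell-v_{\ell-1}\|)$, which is uniformly bounded thanks to \eqref{bndTv}, the boundedness of $\|T\bar v_\ell\|$, and $\|\bar v_\ell-v_{\ell-1}\|\to0$. Call this bound $Q$.

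Running \eqref{monb} inside the $\nu$-strongly convex $\ell$-th inner loop gives $\|q_\ell-\bar q_\ell\|\le\|q_{\ell-1}-\bar q_\ell\|$, whence $\|q_\ell\|\le\|q_{\ell-1}\|+2Q$ and therefore $1+\|q_{\ell-1}\|\le(1+2Q)(1+\|q_{\ell-2}\|)$. Combining the monotonicity of $k\mapsto\|Av_{(\ell,k)}-f\|$ from Proposition \ref{stima}(i), which gives $\|Av_{(\ell,L_\ell-1)}-f\|\le\|Av_{(\ell,0)}-f\|=\|Av_{\ell-1}-f\|$, with the stopping rule \eqref{adpt} applied at step $\ell-1$, I obtain
\[
(1+\|q_{\ell-1}\|)\,\|Av_{(\ell,L_\ell-1)}-f\|\le(1+2Q)\,(1+\|q_{\ell-2}\|)\,\|Av_{\ell-1}-f\|\le\frac{1+2Q}{(\ell-1)^\alpha},
\]
which is uniformly bounded, indeed vanishing.

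Feeding this bound, together with the boundedness of $\J_{\omega,v_{\ell-1}}(\bar v_\ell)$ (convergent by Theorem \ref{chiave}), into \eqref{1785} shows that $\|v_{(\ell,L_\ell-1)}-\bar v_\ell\|$ is uniformly bounded. Consequently $\|Tv_{(\ell,L_\ell-1)}\|$ is bounded, so $\|\nabla\J_p^\varepsilon(v_{(\ell,L_\ell-1)})\|$ is bounded by \eqref{bndTv}, while $\|v_{(\ell,L_\ell-1)}-v_{\ell-1}\|\le\|v_{(\ell,L_\ell-1)}-\bar v_\ell\|+\|\bar v_\ell-v_{\ell-1}\|$ is bounded as well; the identity of the first paragraph then yields the claim. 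The edge case $L_\ell=1$, where $q_{(\ell,L_\ell-1)}=q_{\ell-1}$, is covered by the same estimates applied one step earlier. I expect the main obstacle to be exactly this control of $\|q_{\ell-1}\|$: one must show it grows at most linearly in $\ell$, so that its product with the residual $\|Av_{\ell-1}-f\|=O(\ell^{-\alpha})$ stays bounded, and that linear growth rests entirely on the uniform bound $Q$ for the exact multipliers $\bar q_\ell$.
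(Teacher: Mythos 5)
Your strategy---writing $A^*q_{(\ell,L_\ell-1)}=\nabla\J_p^\varepsilon(v_{(\ell,L_\ell-1)})+2\omega(v_{(\ell,L_\ell-1)}-v_{\ell-1})$ and then controlling $v_{(\ell,L_\ell-1)}$ through Corollary \ref{caz}---forces you to estimate the product $(1+\|q_{\ell-1}\|)\|Av_{(\ell,L_\ell-1)}-f\|$, and the step you use to do so has a genuine gap. You pass from $\|Av_{(\ell,L_\ell-1)}-f\|$ to $\|Av_{(\ell,0)}-f\|=\|Av_{\ell-1}-f\|$ by invoking the monotonicity of $k\mapsto\|Av_{(\ell,k)}-f\|$. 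That monotonicity rests on the monotone-operator inequality $\langle A^*q_{(\ell,k)}-A^*q_{(\ell,k-1)},\,v_{(\ell,k)}-v_{(\ell,k-1)}\rangle\ge0$, which needs \emph{both} multipliers to be (sub)gradients of the \emph{same} convex functional at the corresponding iterates. This holds for $k\ge2$, but at $k=1$ the initial pair only satisfies $A^*q_{(\ell,0)}\in\partial\J_{\omega,v_{\ell-2}}(v_{(\ell,0)})$, not $A^*q_{(\ell,0)}\in\partial\J_{\omega,v_{\ell-1}}(v_{(\ell,0)})$, so the decrease from $k=0$ to $k=1$ is not guaranteed. (By contrast, your use of \eqref{monb} from $k=0$ is fine, since that estimate only needs the subgradient property at the \emph{current} index.) Worse, the configuration produced by the stopping rule \eqref{adpt}---very small residual $\|Av_{\ell-1}-f\|$ paired with a possibly large multiplier $q_{\ell-1}$---is exactly the one in which the first inner step can \emph{increase} the residual; a two-dimensional example with $\tilde\J(v)=\|v\|^2$, $A(x,y)=x$, $f=0$, $v_0=(0,5)$, $q_0$ large already shows $\|Av_1-f\|>\|Av_0-f\|=0$. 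Hence your displayed bound $(1+\|q_{\ell-1}\|)\|Av_{(\ell,L_\ell-1)}-f\|\le(1+2Q)(\ell-1)^{-\alpha}$ is unjustified, and with it the application of \eqref{1785} at $k=L_\ell-1$.

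The paper avoids all of this with one observation you already have but do not exploit: the multiplier update in \eqref{ncbregman} gives the exact identity $A^*q_{(\ell,L_\ell-1)}=A^*q_\ell+2\lambda A^*(Av_\ell-f)$. The second term vanishes by Theorem \ref{chiave}(a), and the first is bounded because at $k=L_\ell$, where \eqref{inclu} and all the estimates of Theorem \ref{chiave} are available, $A^*q_\ell=\nabla\J_p^\varepsilon(v_\ell)+2\omega(v_\ell-v_{\ell-1})$, with $\nabla\J_p^\varepsilon(v_\ell)$ uniformly bounded by Lemma \ref{gllip} and $v_\ell-v_{\ell-1}\to0$. This is essentially the computation you relegate to the ``edge case $L_\ell=1$''; applied at every $\ell$ it finishes the proof in two lines, with no need for the exact multipliers $\bar q_\ell$, for any growth control of $\|q_{\ell-1}\|$, or for the residual monotonicity across the re-initialization of the inner loop.
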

\begin{proof}
By \eqref{inclu} we have
$$
A^* q_\ell \in \nabla \mathcal J_{\omega,v_{\ell-1}} (v_\ell) = \nabla \mathcal J_{p}^\varepsilon (v_\ell) + 2 \omega (v_\ell - v_{\ell-1})
$$
As, by Lemma \ref{gllip}, $\nabla \mathcal J_{p}^\varepsilon (v_\ell)$ is uniformly bounded and $v_\ell - v_{\ell-1} \to 0$, for $\ell \to \infty$, we obtain that also $A^* q_\ell$ is
uniformly bounded. By \eqref{ncbregman}, we have also
$$
A^* q_\ell = A^* q_{\ell,L_\ell-1} - 2 \lambda A^* (A v_\ell - f),
$$
from which, together with $(A v_\ell - f) \to 0$ for $\ell \to \infty$, we eventually deduce the uniform boundedness of $ A^* q_{\ell,L_\ell-1} $ as well.
\end{proof}

\begin{lemma}\label{bndvl}
Assume that $T$ is injective on $\ker A$, or $\ker T \cap \ker A = \{0\}$. Then, for all $0<\varepsilon< r$,  the sequences $(v_\ell)_\ell$  generated by the application
of the algorithm \eqref{ncbregman} for $\mathcal J = \mathcal J_p^\varepsilon$ is uniformly bounded. 
\end{lemma}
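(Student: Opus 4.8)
The plan is to exploit the hypothesis $\ker T \cap \ker A = \{0\}$ in order to recover a coercivity-type estimate for the pair $(T,A)$, and then to feed into it the boundedness information already collected for $\|Tv_\ell\|$ and $\|Av_\ell\|$. The key conceptual point is that the injectivity assumption is precisely what is needed to convert control of the two quantities $\|Tv_\ell\|$ and $\|Av_\ell\|$ into control of $\|v_\ell\|$ itself, even though neither $T$ nor $A$ is individually coercive.

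First I would consider the linear map $\Phi\colon \EE \to \mathcal K \times \EE_1$ defined by $\Phi(v) = (Tv, Av)$. Its kernel equals $\ker T \cap \ker A$, which by assumption is $\{0\}$, so $\Phi$ is injective. Since $\EE$ is finite-dimensional, an injective linear map is bounded below; equivalently, the continuous function $v \mapsto \|Tv\|^2 + \|Av\|^2$ attains a strictly positive minimum on the compact unit sphere of $\EE$. Hence there exists a constant $c>0$ such that
$$
\|Tv\|^2 + \|Av\|^2 \ge c\,\|v\|^2 \qquad \text{for all } v \in \EE.
$$

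Next I would gather the two boundedness facts already established. By the proof of Lemma \ref{gllip}, the sequence $(\|Tv_\ell\|)_{\ell\in\mathbb N}$ is uniformly bounded: indeed \eqref{456} yields that $\|T\bar v_\ell\|$ is bounded, while \eqref{17853} gives $v_\ell - \bar v_\ell \to 0$, so that $\|Tv_\ell\| \le \|T\bar v_\ell\| + \|T\|\,\|v_\ell - \bar v_\ell\|$ stays bounded. On the other hand, Theorem \ref{chiave} part (a) gives $Av_\ell - f \to 0$, whence $\|Av_\ell\| \le \|Av_\ell - f\| + \|f\|$ is uniformly bounded as well.

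Finally I would simply apply the coercivity estimate to $v = v_\ell$, obtaining
$$
c\,\|v_\ell\|^2 \le \|Tv_\ell\|^2 + \|Av_\ell\|^2,
$$
so that the uniform boundedness of the right-hand side forces $(\|v_\ell\|)_{\ell\in\mathbb N}$ to be uniformly bounded, which is the claim. The only step that is not pure bookkeeping is the lower bound for $\Phi$, and even that is routine finite-dimensional linear algebra (injectivity together with compactness of the unit sphere); I therefore do not anticipate any serious obstacle, the substance of the lemma lying entirely in the observation that $\ker T \cap \ker A = \{0\}$ is exactly the hypothesis that makes the two previously derived bounds combine into boundedness of the iterates.
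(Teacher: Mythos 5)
Your proof is correct, but it takes a genuinely different route from the paper's. The paper extracts the first-order optimality condition of the last inner-loop minimization, so that $v_\ell$ solves a linear system of the form $(T^*T+\tfrac12 A^*A)v_\ell = w_\ell^1$ with $w_\ell^1$ uniformly bounded (this requires the auxiliary Lemma \ref{unbfAq} on the boundedness of $A^*q_{(\ell,L_\ell-1)}$, together with Theorem \ref{chiave}(b) and the uniform bound on $\nabla\J_p^\varepsilon(v_\ell)$); stacking this with $Av_\ell=w_\ell^2$ it obtains an injective operator $G$ (injective precisely because $\ker(T^*T+\tfrac12 A^*A)=\ker T\cap\ker A=\{0\}$) and inverts it via $(G^*G)^{-1}G^*$. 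You instead bypass the optimality system and the dual variables entirely: you observe that $(\|Tv_\ell\|)_\ell$ is already bounded (as established in the proof of Lemma \ref{gllip}, from \eqref{456} and \eqref{17853}, which hold unconditionally under (A1)--(A2)) and $(\|Av_\ell\|)_\ell$ is bounded by Theorem \ref{chiave}(a), and then use the elementary finite-dimensional coercivity $\|Tv\|^2+\|Av\|^2\ge c\|v\|^2$ that follows from $\ker T\cap\ker A=\{0\}$. Both arguments hinge on the same kernel condition, but yours is shorter, makes Lemma \ref{unbfAq} unnecessary for this purpose, avoids the slightly delicate bookkeeping of the Euler--Lagrange system (whose displayed form in the paper in fact suppresses the $T^*g$ and penalty-gradient terms), and isolates cleanly the sole role of the injectivity hypothesis; the paper's version, on the other hand, keeps explicit track of the multipliers $q$, which is information of independent interest in an augmented Lagrangian scheme.
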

\begin{proof}
Notice that, by definition of $v_\ell$ in \eqref{ncbregman}, necessarily it solves the following linear system 
$$
 (T^*T +\frac{1}{2} A^* A) v_\ell = \frac{1}{2}A^* (f +q_{\ell, L_{\ell}-1}) + \omega (v_{\ell-1} - v_\ell),
$$
where the right-hand-side of this equality if uniformly bounded by Lemma \ref{unbfAq} and Theorem \ref{chiave} (b). Moreover, as $(Av_\ell-f) \to 0$ for $\ell \to \infty$, we can write that $v_\ell$ is solution of the system
$$
\underbrace{\left [
\begin{array}{l}
 (T^*T +\frac{1}{2} A^* A)\\
A
\end{array}
\right]}_{:=G}
 v_\ell = w_\ell,
$$
where the right-hand-side $w_\ell$ is actually uniformly bounded with respect to $\ell$. Due to our assumption $\ker T \cap \ker A = \{0\}$, we obtain that $\ker G=\{0\}$ and 
$$
v_\ell = (G^* G)^{-1} G^* w_\ell,  \mbox{ for all } \ell \in \mathbb N,$$
hence the uniform boundedness of $(v_\ell)_\ell$.
\end{proof}

We summarize this list of technical observations into the following convergence result.

\begin{theorem}\label{Msmainthm}
Assume that $T$ is injective on $\ker A$, or $\ker T \cap \ker A = \{0\}$. Then, for all $0<\varepsilon<r$,  the sequences $(v_\ell)_\ell$  generated by the application
of the algorithm \eqref{ncbregman} for $\mathcal J = \mathcal J_p^\varepsilon$ has at least one accumulation point, and every accumulation point is a constrained critical point of
$\mathcal J_p^\varepsilon$ on the affine space $\mathcal F(f) = \{v \in \EE: A v = f\}$.
\end{theorem}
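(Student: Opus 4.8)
The plan is to read this statement as a corollary of the abstract convergence result in Theorem \ref{mainthm}, whose conclusion already delivers exactly what we want --- that \emph{every} limit point of the iteration is a constrained critical point --- \emph{provided} two inputs are available: that $\J_p^\varepsilon$ satisfies the standing assumptions (A1) and (A2), and that the generated sequence $(v_\ell)_{\ell\in\N}$ is bounded. All the substantive work has in fact been done in the preceding lemmas, so the proof amounts to checking that these hypotheses are in force and then invoking finite dimensionality for compactness.

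First I would record that, by Proposition \ref{condverified}, for every $0<\varepsilon<r$ the functional $\J_p^\varepsilon$ is $\omega$-semi-convex and satisfies the subdifferential growth bound \eqref{strana}; that is, it meets (A1) and (A2). Consequently Theorem \ref{mainthm} is applicable to the iteration \eqref{ncbregman} run with $\J = \J_p^\varepsilon$, and its conclusion will follow as soon as boundedness of $(v_\ell)_\ell$ is secured. This is precisely the content of Lemma \ref{bndvl}: under the hypothesis $\ker T \cap \ker A = \{0\}$ (equivalently, $T$ injective on $\ker A$), the sequence $(v_\ell)_\ell$ is uniformly bounded. This is the only place where the assumption on $T$ is used, entering through the invertibility of the stacked operator $G$ built from $T^*T + \tfrac{1}{2} A^*A$ and $A$.

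Finally, since $\EE$ is a finite-dimensional Euclidean space, a bounded sequence is relatively compact, so $(v_\ell)_\ell$ admits at least one accumulation point; this proves the first assertion. For the second, I would take any convergent subsequence $v_{\ell_j} \to v$ and apply Theorem \ref{mainthm} directly, which identifies $v$ as a constrained critical point of $\J_p^\varepsilon$ on $\F(f)$. Since the argument applies to every accumulation point, the proof is complete. There is no substantive obstacle at this stage: the difficulty of the whole development was concentrated in establishing the a priori boundedness estimate of Lemma \ref{bndvl} and in verifying (A1)--(A2) for the regularized potential in Proposition \ref{condverified}, both of which are already available; here they are merely combined.
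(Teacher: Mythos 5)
Your proposal is correct and follows exactly the paper's own argument: the paper likewise proves this theorem by citing Lemma \ref{bndvl} for boundedness and then applying Theorem \ref{mainthm}, with (A1)--(A2) already secured by Proposition \ref{condverified}. Your write-up is merely a more explicit version of the same two-line proof, including the (implicit in the paper) appeal to finite dimensionality for the existence of an accumulation point.
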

\begin{proof}
The result follows by a direct application of Theorem \ref{mainthm}, after having recalled the boundedness of  $(v_\ell)_\ell$, which results from Lemma \ref{bndvl}.
\end{proof}

\begin{remark}
The previous convergence result actually applies for the case of the Mumford-Shah functional, for which $T = D_h^\dagger$ and $A = I - D_h D_h^\dagger$, since $D_h^\dagger$ is in fact injective on $\ran(D_h)$, see Section \ref{MSfunc}.
\end{remark}

\subsection{Iterative thresholding algorithms revisited}\label{innerthr}

As already mentioned an iterative thresholding algorithm can be used for identifying local minimizers of the $\mathcal J_p$, see \cite{fowa10} for details. This algorithm is actually very attractive for its exceptional simplicity, and
its ability of performing a {\it separation of components} at a finite number of iterations, leading eventually to a contractive iteration and its convergence.

In this section, we would like to show how an iterative thresholding algorithm can play a profitable role also for linearly constrained problems of the type \eqref{minprob}: namely, it allows for the construction of a very simple and efficient procedure for solving the inner loop minimization problems in our algorithm. Differently from the unconstrained case, however, the thresholding function we can use is a continuous one, so that we do not need to prove a result of separation of components after a finite number of iterations, and we gain additionally contractivity, unconditionally and from the beginning of the iteration.

For the sake of simplicity and without loss of generality, we consider the application of \eqref{ncbregman} for $\lambda =1/2$, and we define now the $\nu$-strongly convex functional 
\begin{equation}\label{omconv2}
\mathcal J_{\omega,u}(v,q) := \mathcal J_{p,\omega,u}^\varepsilon (v,q)= \mathcal J_{\omega,u}(v) + \frac{1}{2} \| Av - (f +q)\|^2.
\end{equation}
Requiring $\nu$-strong convexity is equivalent to the following lower bound on $\omega$.

\begin{lemma}\label{omega}
Define $\mathcal J_{\omega,u}(v,q)$ as in \eqref{omconv2}. According to the notation introduced in Lemma \ref{lem2}, let $B$ as in \eqref{Bdefin} for $s_1 = (r-\varepsilon)$, $s_2=(r+\varepsilon)$, $\gamma_1 = p(r-\varepsilon)^{p-1}$,
$\gamma_2 = (r-\varepsilon)^p$, and $\gamma_3 = r^p$. Then, for
\begin{equation}\label{omlow}
 \omega > \gamma |B| = \gamma \left | \frac{p(r-\varepsilon)^{p-1}}{2 \varepsilon} +  \frac{3}{4 \varepsilon^2} [(r-\varepsilon)^p - r^p] \right|,
\end{equation}
$\mathcal J_{\omega,u}(v,q)$ is a $\nu$-strongly convex function of $v$.
\end{lemma}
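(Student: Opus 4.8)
The plan is to reduce the asserted $\nu$-strong convexity to a pointwise lower bound on the (piecewise continuous) second derivative of the scalar potential $W_r^{p,\varepsilon}$, and then to locate its minimum over $\mathbb{R}$ using the coefficient formulas \eqref{Bdefin}.

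First I would write out, from \eqref{omconv2} together with \eqref{pfunceps},
\[
\mathcal J_{\omega,u}(v,q)=\|Tv-g\|^2+\tfrac12\|Av-(f+q)\|^2+\omega\|v-u\|^2+\gamma\sum_{i=1}^m W_r^{p,\varepsilon}(v_i).
\]
Since $\nu$-strong convexity in $v$ means that $\mathcal J_{\omega,u}(\cdot,q)-\tfrac\nu2\|\cdot\|^2$ is convex, I would split this difference into the two fidelity terms $\|Tv-g\|^2+\tfrac12\|Av-(f+q)\|^2$, which are convex irrespective of $\nu$, and the remaining \emph{separable} part $\sum_i\bigl[\omega(v_i-u_i)^2-\tfrac\nu2 v_i^2+\gamma W_r^{p,\varepsilon}(v_i)\bigr]$. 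Convexity of a separable sum is equivalent to convexity of each scalar summand $\phi(t)=\omega(t-u_i)^2-\tfrac\nu2 t^2+\gamma W_r^{p,\varepsilon}(t)$. Because $W_r^{p,\varepsilon}$ is $C^1$ with piecewise continuous and bounded second derivative (by the gluing in \eqref{epsW} and Lemma \ref{lem2}), $\phi$ is convex precisely when its second derivative, where it exists, obeys $2\omega-\nu+\gamma W_r^{p,\varepsilon}{}''(t)\ge0$. Hence the whole statement reduces to exhibiting some $\nu>0$ with $\nu\le 2\omega+\gamma\,\min_{t}W_r^{p,\varepsilon}{}''(t)$.

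Second, I would compute this minimum. For $|t|\le r-\varepsilon$ one has $W_r^{p,\varepsilon}{}''(t)=p(p-1)|t|^{p-2}\ge0$, and for $|t|\ge r+\varepsilon$ one has $W_r^{p,\varepsilon}{}''(t)=0$; by evenness it therefore suffices to minimise on the transition band $[r-\varepsilon,r+\varepsilon]$, where $W_r^{p,\varepsilon}=\pi_p$ and $\pi_p''$ is affine. Evaluating at the endpoints with the formulas \eqref{Bdefin} for $s_1=r-\varepsilon$, $s_2=r+\varepsilon$, $\gamma_1=p(r-\varepsilon)^{p-1}$ gives $\pi_p''(s_2)=2B$ and $\pi_p''(s_1)=-\gamma_1/\varepsilon-2B$, so the minimum over the band (being that of an affine function) is $\min\{2B,\,-\gamma_1/\varepsilon-2B\}$, and hence $\min_t W_r^{p,\varepsilon}{}''(t)=\min\{0,\,2B,\,-\gamma_1/\varepsilon-2B\}$.

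Third comes the sign analysis that pins this down. The tangent-line inequality for the convex map $t\mapsto t^p$ at $r-\varepsilon$ reads $r^p\ge(r-\varepsilon)^p+p(r-\varepsilon)^{p-1}\varepsilon$, i.e.\ $\gamma_3-\gamma_2\ge\gamma_1\varepsilon$; in view of the identity $\pi_p''(s_1)-\pi_p''(s_2)=-\gamma_1/\varepsilon-4B=\tfrac{3}{\varepsilon^2}\bigl[(\gamma_3-\gamma_2)-\gamma_1\varepsilon\bigr]$ this is exactly $\pi_p''(s_1)\ge\pi_p''(s_2)$, so the minimum over the band sits at $s_2$ with value $2B$. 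A mean value argument ($\gamma_3-\gamma_2=p\xi^{p-1}\varepsilon$ with $\xi\in(r-\varepsilon,r)$ and $\xi^{p-1}\ge(r-\varepsilon)^{p-1}$) shows moreover that $B<0$, whence $2B=-2|B|$ and $|B|$ equals the expression displayed in \eqref{omlow}. Therefore $\min_t W_r^{p,\varepsilon}{}''(t)=-2|B|$, and under the hypothesis $\omega>\gamma|B|$ the choice $\nu:=2(\omega-\gamma|B|)>0$ makes $2\omega-\nu+\gamma W_r^{p,\varepsilon}{}''(t)\ge0$ everywhere, giving convexity of each $\phi$ and thus the claimed $\nu$-strong convexity. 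The main obstacle I anticipate is precisely this endpoint comparison: one must confirm that the minimum of the affine $\pi_p''$ is attained at $s_2$ (value $2B$) rather than at $s_1$, since only then is $\gamma|B|$ the correct threshold; the convexity/tangent-line inequality for $t^p$ is exactly what delivers this. A minor point of care is that for $p=1$ the potential has a convex kink at the origin rather than a classical second derivative there, which does not affect the convexity of $\phi$.
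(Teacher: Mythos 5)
Your proposal is correct and follows essentially the same route as the paper: reduce to the scalar condition $2\omega+\gamma (W_r^{p,\varepsilon})''(t)\ge\nu$ on the transition band, identify the minimum of the affine $\pi_p''$ there as $2B$, and use the convexity (tangent-line) inequality for $t\mapsto t^p$ to get $B<0$; your endpoint comparison $\pi_p''(s_1)\ge\pi_p''(s_2)$ is equivalent to the paper's observation that $A\le 0$ makes $\pi_p''$ nonincreasing on the band. The resulting threshold $\nu=2(\omega-\gamma|B|)$ coincides with the paper's.
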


\begin{proof}
It obviously suffices to show that $\mathcal J_{\omega,u} (v)$ is $\nu$-strongly convex, and since
$$
\mathcal J_{\omega,u} (v)=\| T v - g \|^2 + \gamma \sum_{i=1}^m W_r^{p,\varepsilon}(v_i) + \omega \sum_{i=1}^m (v_i-u_i)^2
$$
it is enough to check that for every $s \in \R$ the real function 
$$t \to \gamma W_r^{p,\varepsilon}(t) + \omega \sum_{i=1}^m (t-s)^2$$
is $\nu$-strongly convex. But this function is piecewise $C^2$ with bounded second derivatives, thus we must only check that for every $t$ such that $|t|\notin\{r-\varepsilon, r+\varepsilon\}$ (notice that the latter is a set and not an interval!), there exists $\nu>0$ such that $\gamma (W_r^{p,\varepsilon})''(t) + 2\omega \ge\nu >0$. By the explicit expression \eqref{epsW} of $W_r^{p,\varepsilon}(t)$, it all reduces to check that for every $t \in (r-\varepsilon, r+\varepsilon)$ one has
\begin{equation}\label{goal}
\gamma\pi''_p(t) + 2\omega \ge\nu >0\,.
\end{equation}

Now, in our case we have
\begin{eqnarray}
B&=& \frac{p(r-\varepsilon)^{p-1}}{2 \varepsilon} +  \frac{3}{4 \varepsilon^2} [(r-\varepsilon)^p - r^p],\label{Bconst}\\
A&=& \frac{p(r-\varepsilon)^{p-1}}{12 \varepsilon^2} +  \frac{B}{3 \varepsilon}. \label{Aconst}
\end{eqnarray}
Since $(r-\varepsilon)^p - r^p \leq - \varepsilon p(r-\varepsilon)^{p-1}$ by convexity, we deduce from \eqref{Bconst} that
\begin{equation}
B \leq - \frac{p(r-\varepsilon)^{p-1}}{4 \varepsilon}<0,
\end{equation}
and therefore, from \eqref{Aconst} we have also $A \leq 0$. But then for all $t \in (r-\varepsilon, r+ \varepsilon)$, we deduce from
these negativity relationships and again \eqref{Aconst} that
\begin{equation*}
\pi_p''(t) = 6 A ( t- (r+\varepsilon)) +2 B \geq 2B
\end{equation*}
so that \eqref{omlow} implies \eqref{goal}, with $\nu=2(\omega-\gamma|B|)$, as required.
\end{proof}

We now define our thresholding function. We preliminarily fix some notation. We fix $0<\varepsilon<r$, and for $W_r^{p,\varepsilon}(t)$ as in \eqref{epsW}, $B$ as in \eqref{Bconst}, and a positive parameter $\mu$ such that
\begin{equation}\label{voraussetzung}
\mu|B|< 1,\,
\end{equation}
we consider
\begin{equation} \label{funceps}
f_{\mu,r}^\xi(t) = (t - \xi)^2 + \mu W_r^{p,\varepsilon}(t)
\end{equation}
with $\xi$ a real number. By \eqref{voraussetzung}, arguing as in the proof of Lemma \ref{omega}, we get $\nu$-strong convexity of $f_{\mu,r}^\xi(t)$, therefore we can define a function $S^{\mu}_p(\xi)$ through
\begin{equation}\label{threshs}
S_p^{\mu}(\xi)=\arg \min_{s \in \R} f_{\mu,r}^\xi(s)\,. 
\end{equation}
Then $S^{\mu}_p(\xi)$ satisfies the following properties.
\begin{lemma}\label{eigenschaft}
For every $\xi \in \R$ and $\mu$ as in \eqref{voraussetzung}, the function $S^{\mu}_p(\xi)$ satisfies:
\begin{itemize}
 \item[(a)]$S_p^{\mu}(\xi)=t$ if and only if $2(t - \xi) + \mu (W_r^{p,\varepsilon})'(t)=0$.
 \item[(b)]$S_p^{\mu}(\xi)$ is a strictly increasing function.
 \item[(c)]$S_p^{\mu}(\xi)$ is Lipschitz continuous with 
\begin{equation}\label{lip}
{\rm Lip}(S_p^{\mu})\le \frac{1}{1-\mu|B|}\,.
\end{equation}
\end{itemize}
\end{lemma}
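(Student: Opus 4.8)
The plan is to exploit that $f_{\mu,r}^\xi$ is a $C^1$ and $\nu$-strongly convex scalar function, so that its minimizer is unique and pinned down by a first-order condition. Indeed, $W_r^{p,\varepsilon}$ is $C^1$ by \eqref{epsW}, and, arguing exactly as in the proof of Lemma \ref{omega} (with the roles of $\gamma$ and $\omega$ there played here by $\mu$ and $1$), the assumption $\mu|B|<1$ from \eqref{voraussetzung} yields that $f_{\mu,r}^\xi$ is $\nu$-strongly convex with $\nu = 2(1-\mu|B|)$; in particular $S_p^\mu(\xi)$ in \eqref{threshs} is well defined. Since $f_{\mu,r}^\xi$ is convex and differentiable, its unique minimizer is characterized by the vanishing of $(f_{\mu,r}^\xi)'(t)=2(t-\xi)+\mu(W_r^{p,\varepsilon})'(t)$, which is precisely statement (a).

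For (b) and (c) I would rewrite the first-order condition in terms of the auxiliary function $g(t):=2t+\mu(W_r^{p,\varepsilon})'(t)$, so that by (a) the identity $S_p^\mu(\xi)=t$ is equivalent to $g(t)=2\xi$. The whole of (b) and (c) then reduces to a quantitative strict-monotonicity estimate for $g$. Wherever the second derivative exists one has $g'(t)=2+\mu(W_r^{p,\varepsilon})''(t)$, and the lower bound $(W_r^{p,\varepsilon})''\ge 2B$ established inside the proof of Lemma \ref{omega} (together with $B<0$ and \eqref{voraussetzung}) gives $g'(t)\ge 2(1-\mu|B|)>0$. Since $W_r^{p,\varepsilon}$ is $C^1$ with piecewise continuous and bounded second derivative, $g$ is Lipschitz, hence absolutely continuous, so I can integrate this pointwise bound across the kinks to obtain, for $t_1\le t_2$,
\[
g(t_2)-g(t_1)=\int_{t_1}^{t_2} g'(s)\,ds \ge 2(1-\mu|B|)(t_2-t_1).
\]

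With this estimate the two remaining claims follow quickly. For (b), given $\xi_1<\xi_2$ and their images $t_i=S_p^\mu(\xi_i)$, property (a) gives $g(t_1)=2\xi_1<2\xi_2=g(t_2)$; since the displayed inequality makes $g$ strictly increasing, this forces $t_1<t_2$. For (c), the same two identities combined with the displayed inequality yield $2(\xi_2-\xi_1)=g(t_2)-g(t_1)\ge 2(1-\mu|B|)(t_2-t_1)$, whence $t_2-t_1\le (\xi_2-\xi_1)/(1-\mu|B|)$; using the monotonicity just proved in (b) to remove the absolute values, this is exactly the Lipschitz bound \eqref{lip}.

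The only genuinely delicate point is that $W_r^{p,\varepsilon}$ is merely $C^1$, so $g$ fails to be differentiable at the kink abscissae $|t|=r\pm\varepsilon$ and one cannot argue purely pointwise with $g'$. I expect this to be the main obstacle, and I would dispose of it exactly as above, replacing any $C^2$ reasoning by the absolute continuity of $g$ and the integral estimate, the continuity of $(W_r^{p,\varepsilon})'$ ensuring that nothing is lost at the kinks. A secondary point worth underlining is that the strong-convexity constant $\nu=2(1-\mu|B|)$ is the very same quantity governing the monotonicity rate of $g$, which is what makes the constant in \eqref{lip} consistent with the threshold \eqref{voraussetzung} and with Lemma \ref{omega}.
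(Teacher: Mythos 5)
Your proposal is correct and follows essentially the same route as the paper: part (a) via strong convexity and the first-order condition, and parts (b)--(c) via the lower bound $(W_r^{p,\varepsilon})''\ge 2B$ integrated across the kinks by the fundamental theorem of calculus, yielding the same constant $1/(1-\mu|B|)$. Your repackaging of the optimality condition as $g(t)=2\xi$ with $g(t)=2t+\mu(W_r^{p,\varepsilon})'(t)$ merely unifies the paper's contradiction argument for (b) and its identity-plus-FTC estimate for (c) into a single quantitative monotonicity bound, which is a cosmetic rather than substantive difference.
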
 

\begin{proof}
Part (a) of the statement is obvious by \eqref{funceps}, \eqref{threshs} and the $\nu$-strong convexity of $f_{\mu,r}^\xi(t)$. To prove part (b), fix $\xi_1 < \xi_2 \in \R$ and correspondingly, let $t_1:=S^{\mu}_p(\xi_1)$ and $t_2:=S^{\mu}_p(\xi_2)$. We have $2(t_1 - \xi_1) + \mu (W_r^{p,\varepsilon})'(t_1)=0$. Assume by contradiction that $t_2 \le t_1$. Since the function $t \mapsto 2(t - \xi_1) + \mu (W_r^{p,\varepsilon})'(t)$ is strictly increasing by strong convexity, we get $2(t_2 - \xi_1) + \mu (W_r^{p,\varepsilon})'(t_2)\le 0$. Now $\xi_1 < \xi_2$ yields $2(t_2 - \xi_2) + \mu (W_r^{p,\varepsilon})'(t_2)< 0$, in contradiction with part (a) of the statement.

To prove part (c), we fix $\xi_1$ and $\xi_2 \in \R$, and we can suppose without loss of generality that $\xi_1 <\xi_2$. Again, we define $t_1:=S^{\mu}_p(\xi_1)$, and $t_2:=S^{\mu}_p(\xi_2)$. From part (b), we have $t_1 < t_2$ and from part (a) we get that
$$
2(t_1 - \xi_1) + \mu (W_r^{p,\varepsilon})'(t_1)=2(t_2 - \xi_2) + \mu (W_r^{p,\varepsilon})'(t_2),\,
$$
that is, since $\xi_1 < \xi_2$ and $t_1 <t_2$, 
\begin{equation}\label{conto}
|t_2-t_1|+\frac \mu2 [(W_r^{p,\varepsilon})'(t_2)-(W_r^{p,\varepsilon})'(t_1)]=|\xi_2-\xi_1|\,.
\end{equation}
Now $(W_r^{p,\varepsilon})'(t)$ is piecewise $C^1$ with bounded derivative. Moreover, given $B$ as in \eqref{Bconst}, arguing as in Lemma \ref{omega} we have $(W_r^{p,\varepsilon})''(t)\ge 2B$ for every $t$ such that $|t|\in (r-\varepsilon, r+\varepsilon)$. Since $(W_r^{p,\varepsilon})''(t)\ge 0$ when $|t|\notin [r-\varepsilon, r+\varepsilon]$ and $B<0$, we get that $(W_r^{p,\varepsilon})''(t)\ge 2B$ for every $t$, with the only exceptions of the four points $t=r-\varepsilon$, $t=-r-\varepsilon$, $t=r+\varepsilon$, and $t=-r+\varepsilon$. Since $t_1 <t_2$, by the fundamental theorem of calculus we have
$$
[(W_r^{p,\varepsilon})'(t_2)-(W_r^{p,\varepsilon})'(t_1)]\ge 2B(t_2-t_1)=-2|B||t_2-t_1|\,,
$$
since $B <0$. Using \eqref{conto}, this gives
$$
(1-\mu|B|)|t_2-t_1|\le|\xi_2-\xi_1|,\,
$$
which concludes the proof.
\end{proof}
\begin{figure}[htp]
\begin{center}
\subfigure[]{ \label{s1(a)}
\includegraphics[width=2.4in]{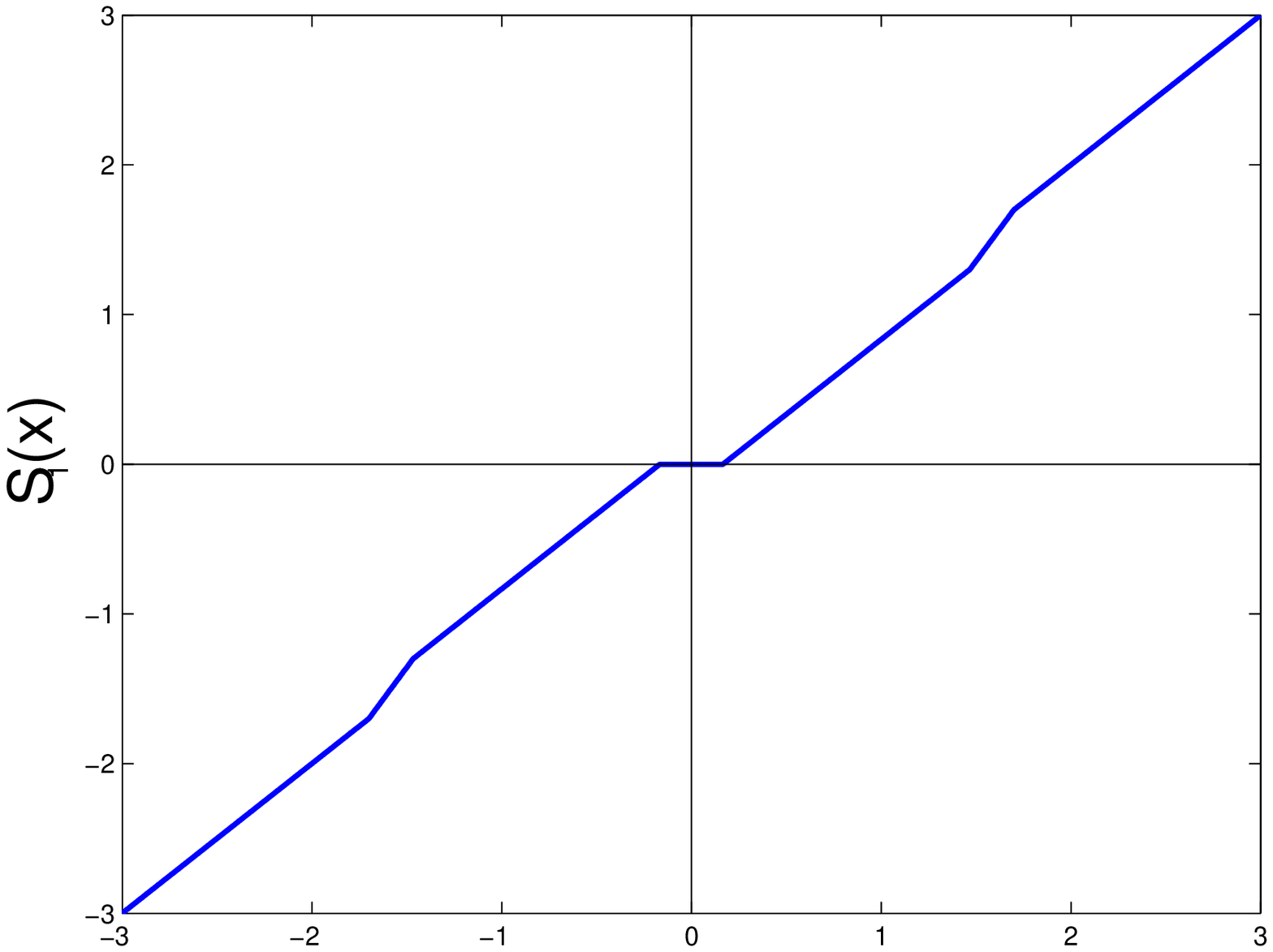}}
\subfigure[]{\label{s1(b)}
\includegraphics[width=2.4in]{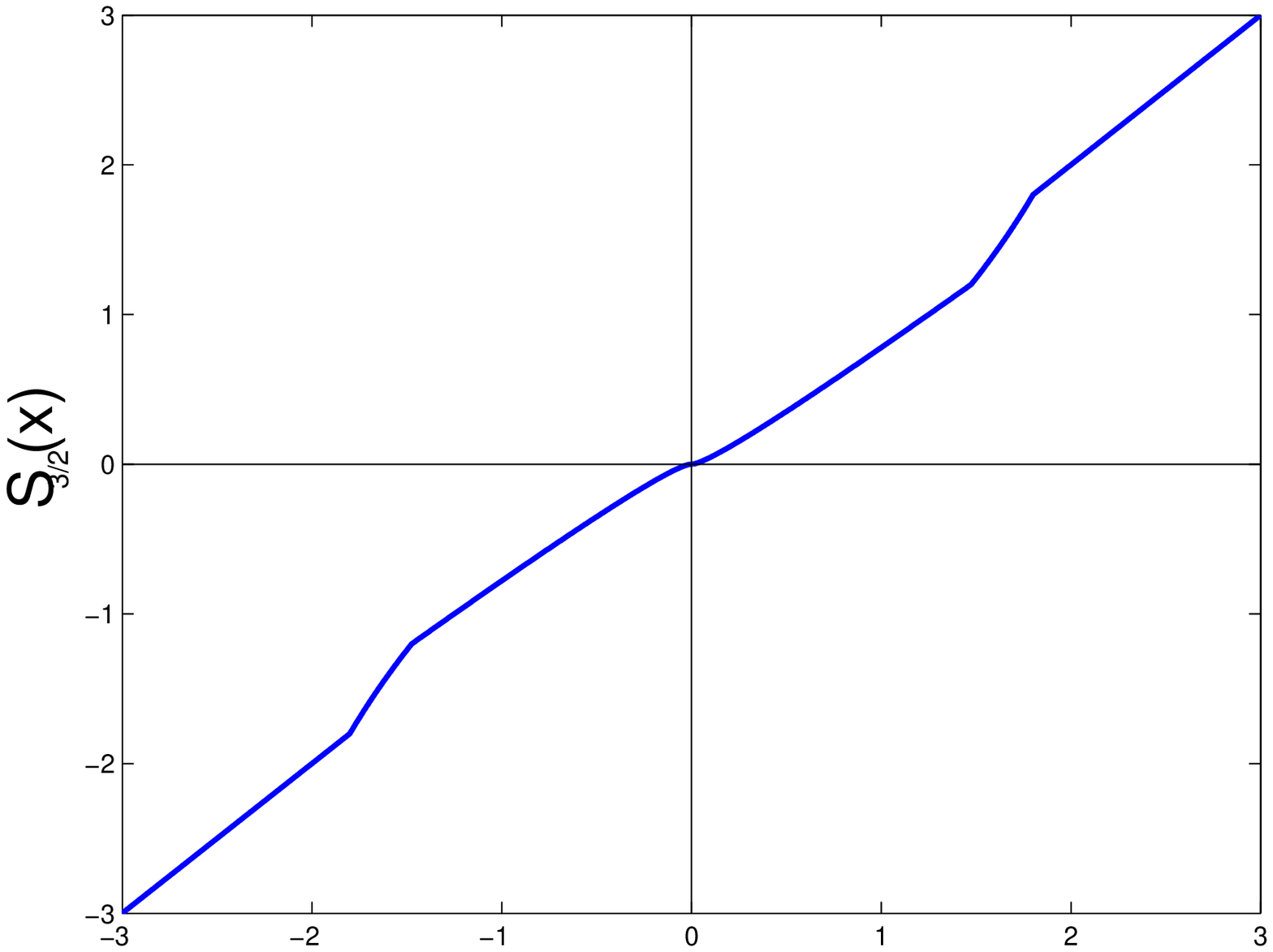}}
\subfigure[]{\label{s1(c)}
\includegraphics[width=2.4in]{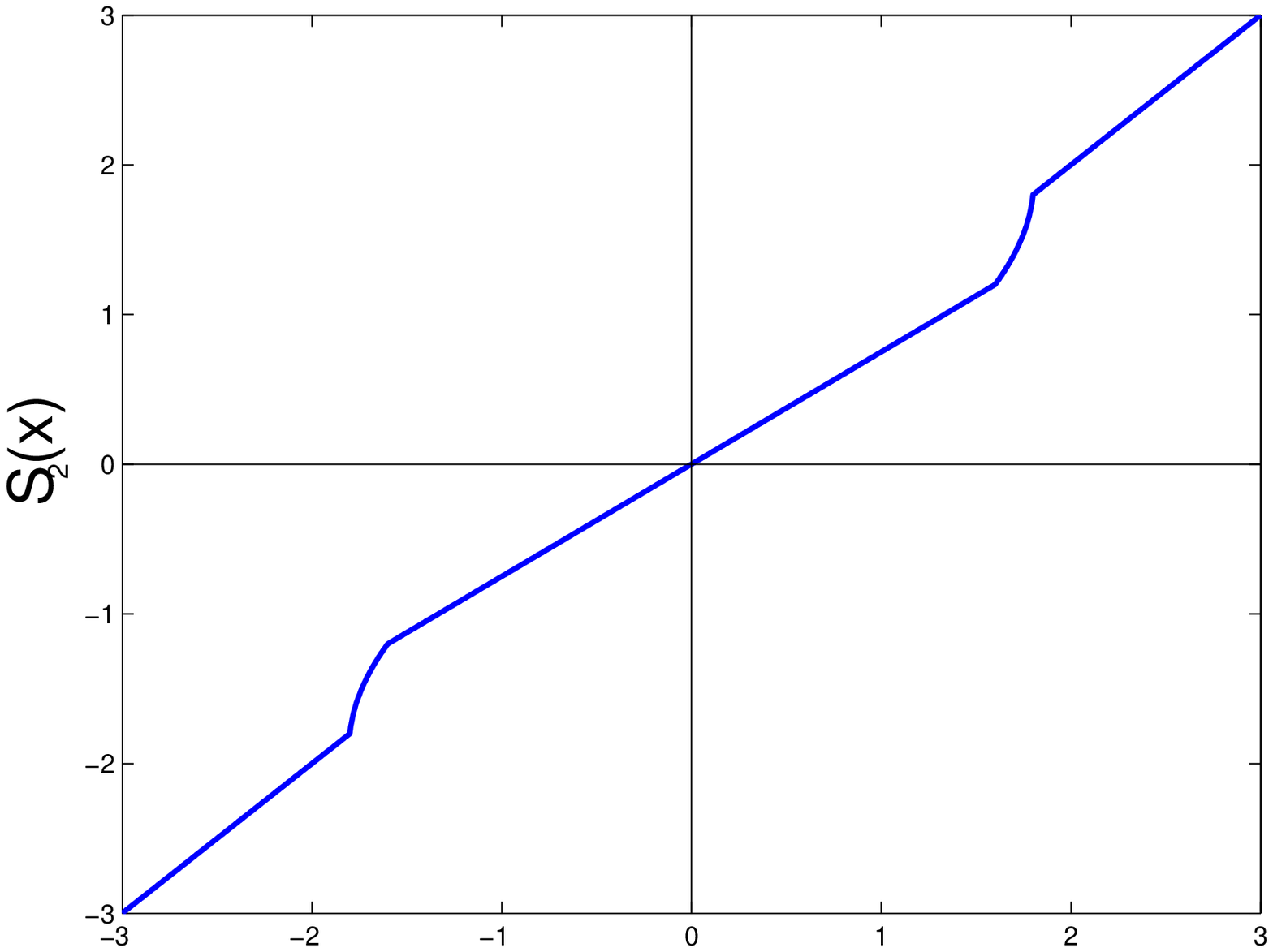}}
\end{center}
\caption{The Lipschitz continuous thresholding functions  $S_1^\mu$,  $S_{3/2}^\mu$, and $S_{2}^\mu$, with parameters $p=1,3/2$, and $2$, respectively, and $r = 1.5$, $\mu = 5$, $\varepsilon=0.3$.}\label{Sthreshold}
\end{figure}

While this latter result states certain qualitative properties of $S_p^\mu$ for any $1 \leq p < \infty$, we explicitly write its expression, e.g., for $p=2$, which is easily obtained by solving a second degree polynomial equation:
\begin{equation}\label{s2thrs}
S_2^\mu(\xi)=\left\{\begin{array}{lr}
\displaystyle\frac{\xi}{1+\mu} & |\xi|<(r-\varepsilon)(1+\mu)\\ \ \\
\displaystyle\frac{4\varepsilon}{3\mu}\left(1+\frac{\mu}{4\varepsilon}(2\varepsilon+r)-\sqrt{\frac{\Gamma(\xi)}{4}}\right)&(r-\varepsilon)(1+\mu)\leq|\xi|\leq r+\varepsilon\\ \ \\
\displaystyle\xi&|\xi|>r+\varepsilon
\end{array}\right.
\end{equation}
where
\begin{equation}
\Gamma(\xi)= 4 \left ( 1+\left(\frac{\mu}{4\varepsilon}\right)^2(2r+\varepsilon)^2+\frac{\mu}{2\varepsilon}(r+2\varepsilon)-\frac{3\mu}{2\varepsilon}\xi \right).
\end{equation}
We further report in Figure \ref{Sthreshold} the graphics of the thresholding function $S_p^\mu$ for $p \in \{1,3/2,2\}$, and parameters $r = 1.5$, $\mu = 5$, $\varepsilon=0.3$.

We now get back to our functional $\mathcal J_{\omega,u}(v,q)$ defined in \eqref{omconv2} and we further consider the associated surrogate functional, 
\begin{eqnarray}
\mathcal J_{\omega,u}^{surr}(v,q,w) := \mathcal J_{p,\omega,u}^{surr,\varepsilon} (v,q,w)= \mathcal J_{\omega,u}(v,q) &+& (\| v-w\|^2 - \| T v- Tw\|^2) \nonumber \\
&+&(\| v-w\|^2 - \frac12 \| A v- Aw\|^2) \nonumber \\
&+& (\| v-w\|^2 - \omega \| v- w\|^2).\label{surromconv2}
\end{eqnarray}
Up to rescaling of $g, f, q, \gamma$ we can assume here and later, and  without loss of generality, that $\| T\| <1$, $\frac{1}{\sqrt 2} \| A\| <1$, and $\omega < 1$, while still keeping the lower bound on $\omega$ given by \eqref{omlow} which is necessary to ensure $\nu$-strong convexity. Hence, we have
\begin{equation}\label{mon1}
\mathcal J_{\omega,u}^{surr}(v,q,w) \geq \mathcal J_{\omega,u}(v,q),
\end{equation}
and
\begin{equation}\label{mon2}
\mathcal J_{\omega,u}^{surr}(v,q,w) = \mathcal J_{\omega,u}(v,q),
\end{equation}
if and only if $w = v$.

\begin{proposition} 
Let $0<\varepsilon < r$. Assume  $\| T\| <1$, $\frac{1}{\sqrt 2} \| A\| <1$, $\omega < 1$, and that $\omega$ and $\gamma$ satisfy \eqref{omlow}. 
Then
\begin{equation}\label{mincond1}
v^* = \arg \min_{v \in \EE} \mathcal J_{\omega,u}(v,q)
\end{equation}
if and only if $v^*$ satisfies the following component-wise fixed-point equation: for $i=1,\dots,m$,
\begin{equation}\label{fixpt1}
v_i^* = S_p \left ( \frac{1}{3} \left \{ [ (I-T^*T) + (I-\frac12A^*A) + (1-\omega)I] v^* +  (T^* g + \frac{1}{2} A^*(f+q) +  \omega u ) \right \}_i \right), 
\end{equation}
where $S_p$ is the thresholding function $S_p^{\mu}$ defined in Lemma \ref{eigenschaft} for $\mu=\gamma/3$. 
\end{proposition}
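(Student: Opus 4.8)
The plan is to reduce the characterization \eqref{mincond1} to a first-order optimality condition and then to read that condition off, component by component, as the fixed-point equation \eqref{fixpt1}. By Lemma \ref{omega} together with the standing assumptions on $\omega$ and $\gamma$, the map $v \mapsto \mathcal J_{\omega,u}(v,q)$ of \eqref{omconv2} is $\nu$-strongly convex, hence it has a unique minimizer and $v^*$ solves \eqref{mincond1} if and only if $0$ lies in the subdifferential of $v\mapsto\mathcal J_{\omega,u}(v,q)$ at $v^*$. The three quadratic terms $\|Tv-g\|^2$, $\tfrac12\|Av-(f+q)\|^2$ and $\omega\|v-u\|^2$ are smooth and $W_r^{p,\varepsilon}$ is (for $p>1$) differentiable, so this optimality condition reads
\[
2T^*(Tv^*-g) + A^*(Av^*-(f+q)) + 2\omega(v^*-u) + \gamma (W_r^{p,\varepsilon})'(v^*) = 0,
\]
understood componentwise via $\big((W_r^{p,\varepsilon})'(v^*)\big)_i = (W_r^{p,\varepsilon})'(v_i^*)$ (for $p=1$ the derivative at the origin is replaced by the subdifferential, with no change in what follows).

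Next I would isolate the diagonal part. Writing $M := (I-T^*T) + (I-\tfrac12 A^*A) + (1-\omega)I$ and $b := T^*g + \tfrac12 A^*(f+q) + \omega u$, the key algebraic identity is $2T^*Tv^* + A^*Av^* + 2\omega v^* = 6v^* - 2Mv^*$, immediate from $2M = (6-2\omega)I - 2T^*T - A^*A$. Substituting this into the optimality condition collapses all coupling through $T$ and $A$ and gives
\[
6v^* + \gamma (W_r^{p,\varepsilon})'(v^*) = 2\big(Mv^* + b\big).
\]
Dividing by $3$ and setting $\mu = \gamma/3$, each component reads $2\big(v_i^* - \tfrac13(Mv^*+b)_i\big) + \mu (W_r^{p,\varepsilon})'(v_i^*) = 0$. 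This is precisely the equation of Lemma \ref{eigenschaft}(a) with $\xi_i = \tfrac13(Mv^*+b)_i$, so part (a) of that lemma yields $v_i^* = S_p^\mu(\xi_i)$, i.e. \eqref{fixpt1}. Since every manipulation above is an equivalence, both implications of the proposition are obtained simultaneously.

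I expect the main obstacle to be purely the bookkeeping in the identity $2T^*Tv^* + A^*Av^* + 2\omega v^* = 6v^* - 2Mv^*$: one must check that $T^*T$, $A^*A$ and the identity multiples assemble exactly into the combination $M$ displayed in \eqref{fixpt1}, and that the factor $3$ (hence $\mu = \gamma/3$) arises from the three ``identity'' blocks. An equivalent and more transparent route, which also motivates the iteration of the next section, is to use the surrogate \eqref{surromconv2}: expanding it in $v$, the quadratic contributions $\|Tv\|^2$ and $\tfrac12\|Av\|^2$ are cancelled by the subtracted terms $\|Tv-Tw\|^2$ and $\tfrac12\|Av-Aw\|^2$ (this cancellation is the very purpose of the surrogate), so that $v\mapsto\mathcal J_{\omega,u}^{surr}(v,q,w)$ reduces, up to a $v$-independent constant, to the separable sum $3\sum_i\big(v_i - \tfrac13(Mw+b)_i\big)^2 + \gamma\sum_i W_r^{p,\varepsilon}(v_i)$, whose componentwise minimizer is $S_p^{\gamma/3}\big(\tfrac13(Mw+b)_i\big)$; choosing $w=v^*$, which is legitimate by \eqref{mon1}--\eqref{mon2}, recovers \eqref{fixpt1}.

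For the ``if'' direction one may simply invoke strong convexity to pass from stationarity back to global minimality; alternatively, note that by Lemma \ref{eigenschaft}(c) and the bound $0 \preceq M \preceq (3-\omega)I$ one has $\mathrm{Lip}(S_p^\mu)\,\|M/3\| \le \tfrac{3-\omega}{3-\gamma|B|}<1$ exactly because $\omega>\gamma|B|$ by \eqref{omlow}, so that \eqref{fixpt1} defines a genuine contraction with a unique fixed point, which then must coincide with the unique minimizer produced by the ``only if'' direction.
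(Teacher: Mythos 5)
Your argument is correct, but your primary route differs from the paper's. You work directly with the first-order optimality condition for the $\nu$-strongly convex map $v \mapsto \mathcal J_{\omega,u}(v,q)$, regroup the quadratic terms via the identity $2T^*Tv^* + A^*Av^* + 2\omega v^* = 6v^* - 2Mv^*$, and then invoke the stationarity characterization of Lemma \ref{eigenschaft}(a) componentwise; the bookkeeping you flag as the main risk is in fact correct (note that \eqref{voraussetzung} holds for $\mu=\gamma/3$ since $\gamma|B|<\omega<1<3$ by \eqref{omlow}, so Lemma \ref{eigenschaft} applies). The paper instead never differentiates the objective: it uses the majorization properties \eqref{mon1}--\eqref{mon2} of the surrogate \eqref{surromconv2} to pass from \eqref{mincond1} to \eqref{uzti}, expands the surrogate into the separable form \eqref{ertqw}, and reads off \eqref{fixpt1} from the \emph{argmin} definition \eqref{threshs} of $S_p^\mu$; the converse goes back through $0\in\partial\mathcal J^{surr}_{\omega,u}(v^*,q,v^*)=\partial\mathcal J_{\omega,u}(v^*,q)$ and convexity. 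Your ``alternative route'' paragraph is essentially the paper's proof. What each buys: your direct computation is shorter and makes transparent where the factor $3$ and $\mu=\gamma/3$ come from, but it leans on writing $(W_r^{p,\varepsilon})'$ pointwise, which for $p=1$ (and at the matching points of the spline) requires the subdifferential caveat you mention; the paper's surrogate route avoids any differentiation of the penalty, treats all $p\ge 1$ uniformly, and directly motivates the iteration \eqref{fixpt3} analyzed in Theorem \ref{thmthrconv}. Your closing remark that the fixed-point map is a contraction with constant $\tfrac{3-\omega}{3-\gamma|B|}<1$ is consistent with that theorem but is not needed for the equivalence, which already follows from strong convexity.
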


\begin{proof}
Assume that $v^*$ satisfies \eqref{mincond1}. From \eqref{mon1} and \eqref{mon2}, we have the inequalities
\begin{eqnarray*}
\mathcal J_{\omega,u}^{surr}(v^*,q,v^*) &=& \mathcal J_{\omega,u}(v^*,q)\\
&\leq & \mathcal J_{\omega,u}(v,q) \\
&= & \mathcal J_{\omega,u}^{surr}(v,q,v)\\
&\leq & \mathcal J_{\omega,u}^{surr}(v,q,v^*).
\end{eqnarray*}
Hence we obtain also
\begin{equation}\label{uzti}
v^* = \arg \min_{v \in \EE} \mathcal J_{\omega,u}^{surr}(v,q,v^*). 
\end{equation}
We notice now by a direct computation that
\begin{equation}\label{ertqw}
\frac 13 \mathcal J_{\omega,u}^{surr}(v,q,v^*) = \left \| v - \left ( \frac{b^1+b^2+b^3}{3}  \right) \right \|^2 + \frac{\gamma}{3} \sum_{i=1}^m W_r^{p,\varepsilon}(v_i) + C(b^1,b^2,b^3,\gamma),
\end{equation}
where $b^1 = (I-T^*T) v^* + T^*g$, $b^2 = (I-\frac{1}{2}A^*A) v^* + \frac{1}{2}A^*(f+q)$, $b^3=  (I-\omega I) v^* + \omega u$, and $C(b^1,b^2,b^3,\gamma)$ is a term which does not depend on $v$.
It now follows by the definition \eqref{threshs} of $S_p=S_p^{\mu}$ that $v^*$ satisfies \eqref{fixpt1}.

Conversely, by \eqref{ertqw}, if $v^*$ satisfies \eqref{fixpt1}, then it also satisfies \eqref{uzti}. It follows that
$$
0 \in \partial \mathcal J_{\omega,u}^{surr}(v^*,q,v^*)=\partial \mathcal J_{\omega,u}(v^*,q)
$$
where the last equality trivially follows from \eqref{surromconv2}. By convexity of $\mathcal J_{\omega,u}$, this implies \eqref{mincond1}.
\end{proof}

Looking at the fixed point equation \eqref{fixpt1}, which characterizes the unique minimizer of $\mathcal J_{\omega,u}(v,q)$, it is now natural to wonder whether the corresponding fixed-point iteration
\begin{equation}\label{fixpt3}
v_i^{n+1} = S_p \left ( \frac{1}{3} \left \{ [ (I-T^*T) + (I-\frac12A^*A) + (1-\omega)I] v^n +  (T^* g + \frac{1}{2} A^*(f+q) +  \omega u ) \right \}_i \right),
\end{equation}
generates a sequence $(v^n)_{n \in \mathbb N}$ which converges to $v^*$. The next Theorem gives a positive answer to this question.

\begin{theorem}\label{thmthrconv}
Let $0<\varepsilon < r$. Assume  $\| T\| <1$, $\frac{1}{\sqrt 2} \| A\| <1$, $\omega < 1$, and that $\omega$ and $\gamma$ satisfy \eqref{omlow}. 
Let $v^*=\arg \min_{v \in \EE} \mathcal J_{\omega,u}(v,q)$, and consider the sequence $v^n$ defined by the iteration \eqref{fixpt3}. Let $\delta:=\frac{3-\omega}{3-\gamma|B|}$, where $B$ is defined by \eqref{Bconst}. Then $\frac 23 <\delta<1$ and for every $n \in \N$ one has
\begin{equation}\label{konvergenz}
\|v^{n}-v^*\|\leq \frac{\delta^n}{1-\delta} \|v^1-v^0\|
\end{equation}
so that in particular $v^n\to v^*$ as $n$ tends to $+\infty$.
\end{theorem}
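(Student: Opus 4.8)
The plan is to read the iteration \eqref{fixpt3} as a fixed-point iteration $v^{n+1}=\Phi(v^n)$ for the map $\Phi\colon\EE\to\EE$ acting componentwise by $(\Phi(v))_i=S_p\big(\tfrac13\{Mv+b\}_i\big)$, where $M:=(I-T^*T)+(I-\tfrac12 A^*A)+(1-\omega)I$ and $b:=T^*g+\tfrac12 A^*(f+q)+\omega u$. The characterizing equation \eqref{fixpt1} says precisely that $v^*$ is a fixed point of $\Phi$, so it suffices to prove that $\Phi$ is a contraction with modulus $\delta$. Convergence and the estimate \eqref{konvergenz} then follow from the Banach fixed-point theorem, whose standard a priori bound $\|v^n-v^*\|\le\frac{\delta^n}{1-\delta}\|v^1-v^0\|$ is exactly the claimed inequality; since $\EE$ is finite-dimensional, completeness is automatic.

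To bound the Lipschitz constant of $\Phi$, I would split it into the affine part $v\mapsto\tfrac13(Mv+b)$ and the componentwise thresholding $S_p$. First observe that $M=(3-\omega)I-T^*T-\tfrac12 A^*A$ is self-adjoint; since $T^*T\succeq0$ and $\tfrac12 A^*A\succeq0$ one gets $M\preceq(3-\omega)I$, while the assumptions $\|T\|<1$, $\tfrac1{\sqrt2}\|A\|<1$, and $\omega<1$ make each of the three summands positive definite, so $M\succ0$. Hence $0\prec M\preceq(3-\omega)I$ and therefore $\|\tfrac13 M\|\le\tfrac{3-\omega}{3}$. Next, Lemma \ref{eigenschaft}(c) applied with $\mu=\gamma/3$ shows that the scalar map $S_p$ is Lipschitz with constant $\frac{1}{1-(\gamma/3)|B|}=\frac{3}{3-\gamma|B|}$; because the thresholding acts componentwise and squared Euclidean distances add over components, this is also its Lipschitz constant on $\EE$. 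Composing the two pieces, $\Phi$ is Lipschitz with constant at most $\frac{3}{3-\gamma|B|}\cdot\frac{3-\omega}{3}=\frac{3-\omega}{3-\gamma|B|}=\delta$.

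It remains to check $\tfrac23<\delta<1$. The lower bound on $\omega$ in \eqref{omlow} gives $\omega>\gamma|B|$, hence $3-\omega<3-\gamma|B|$ and so $\delta<1$, which is exactly what turns $\Phi$ into a genuine contraction. For the other inequality, $\delta>\tfrac23$ is equivalent (after cross-multiplying by the positive quantity $3-\gamma|B|$) to $3(3-\omega)>2(3-\gamma|B|)$, i.e. to $3\omega-2\gamma|B|<3$, which holds because $\omega<1$ forces $3\omega<3$ while $\gamma|B|>0$. With $\delta\in(\tfrac23,1)$ established, the contraction principle closes the argument.

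The spectral estimate on $M$ and the elementary manipulation pinning down the range of $\delta$ are routine. The only steps that genuinely require attention are transferring the \emph{componentwise} Lipschitz bound of the thresholding to the Euclidean Lipschitz bound on $\EE$ (clear, since the squared norm splits as a sum over coordinates), and recognizing that the strong-convexity bound $\omega>\gamma|B|$ from \eqref{omlow} is precisely what makes $\delta<1$, so that the admissibility of the parameters and the contractivity of the iteration are one and the same condition.
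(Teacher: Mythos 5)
Your proposal is correct and follows essentially the same route as the paper: the iteration is read as a fixed-point map whose Lipschitz constant is bounded by composing the spectral estimate $\|\tfrac13 M\|\le 1-\tfrac{\omega}{3}$ with the Lipschitz bound $\tfrac{1}{1-(\gamma/3)|B|}$ for $S_p^{\gamma/3}$ from Lemma \ref{eigenschaft}(c), giving $\delta$ and concluding via the Banach fixed-point theorem. The only difference is that you spell out the steps the paper labels ``easy'' or ``obvious'' (the positive-definiteness argument for $M$ and the verification that $\tfrac23<\delta<1$), and these details are all accurate.
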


\begin{proof}
By the assumptions $\omega<1$ and \eqref{omlow}, the bounds on $\delta$ are obvious. For every $n \geq 0$ one has $v^{n+1}= \mathbb U(v^n)$, where $\mathbb U$ is an operator having component-wise action defined by
$$
[\mathbb U(v)]_i= S_p\left ( \frac{1}{3} \left \{ [ (I-T^*T) + (I-\frac12A^*A) + (1-\omega)I] v +  (T^* g + \frac{1}{2} A^*(f+q) +  \omega u ) \right \}_i \right)\,,
$$
where $S_p$ is the function $S_p^{\mu}$ defined in Lemma \ref{eigenschaft} for $\mu=\gamma/3$.
Using the hypotheses, it is easy to show that $\|\frac{1}{3}[(I-T^*T) + (I-\frac12A^*A) + (1-\omega)I]\|\le 1-\tfrac \omega3$, therefore, using \eqref{lip} for $\mu=\gamma/3$ we get
$$
\operatorname {Lip}(\mathbb U)\le (1-\tfrac \omega3)(\tfrac{1}{1-\gamma/3 |B|})=\delta\,;
$$
in particular, $\mathbb U$ is a contraction mapping, and we conclude by Banach fixed point Theorem.
\end{proof}

\section{Numerical Experiments}\label{numexper}

In this section we report the results of numerical experiments to demonstrate and confirm the behavior of the algorithm as predicted by our theoretical findings. 
We focus on two relevant examples, i.e., the minimization of the discrete Mumford-Shah functional in dimension two, and the discrete time quasi-static evolution of the 
Francfort-Marigo brittle fracture model in one dimension.  

\subsection{Mumford-Shah functional minimization in dimension two}

As clarified in \eqref{MS2d}, this is equivalent to consider the minimization of the function $\J_p(v)=\| T v - g \|^2 + \gamma \sum_{i=1}^m W_r^p(v_i)$, for $p=2$, subjected to $A v = 0$, where $T =D_h^\dagger$, $A = I - D_h D_h^\dagger$,
and $v$ represents the two dimensional discrete gradient of the competitor $u$. In all the simulations, we used the iterative thresholding algorithm \eqref{fixpt3} in order to solve the convex optimizations of the inner loop.
Our first experiment refers to the implementation of the algorithm for competitors $u$, being two dimensional arrays, of dimensions $25 \times 25$. 
The parameters chosen are $\gamma=1.7 \times 10^{-1}$, $r=3.5$, and $\varepsilon=4.5\times 10^{-3}$. Notice that $\omega$ is always explicitely fixed according to the formula $\omega >\gamma \left ( \frac{1}{4} + \frac{r}{2\varepsilon} \right)$, as one can easily derive by combining \eqref{omlow} and \eqref{Bconst}, for $p=2$. 
In Figure \ref{im25} we show the dynamics of the discrepancy $\| A v_\ell \|$ to the realization of the linear constraint $A v = 0$, and of the energy
$\J_p(v_\ell)$, depending on the iterations $v_\ell$, for $\ell=0,1,2,\dots$. This simulation confirms that the algorithm tends to converge to a stationary point with energy level lower than the initial guess, and for which the constraint is numerically verified.

\begin{figure}[htp]
\begin{center}
\subfigure[]{ \label{A25(a)}
\includegraphics[width=2.4in]{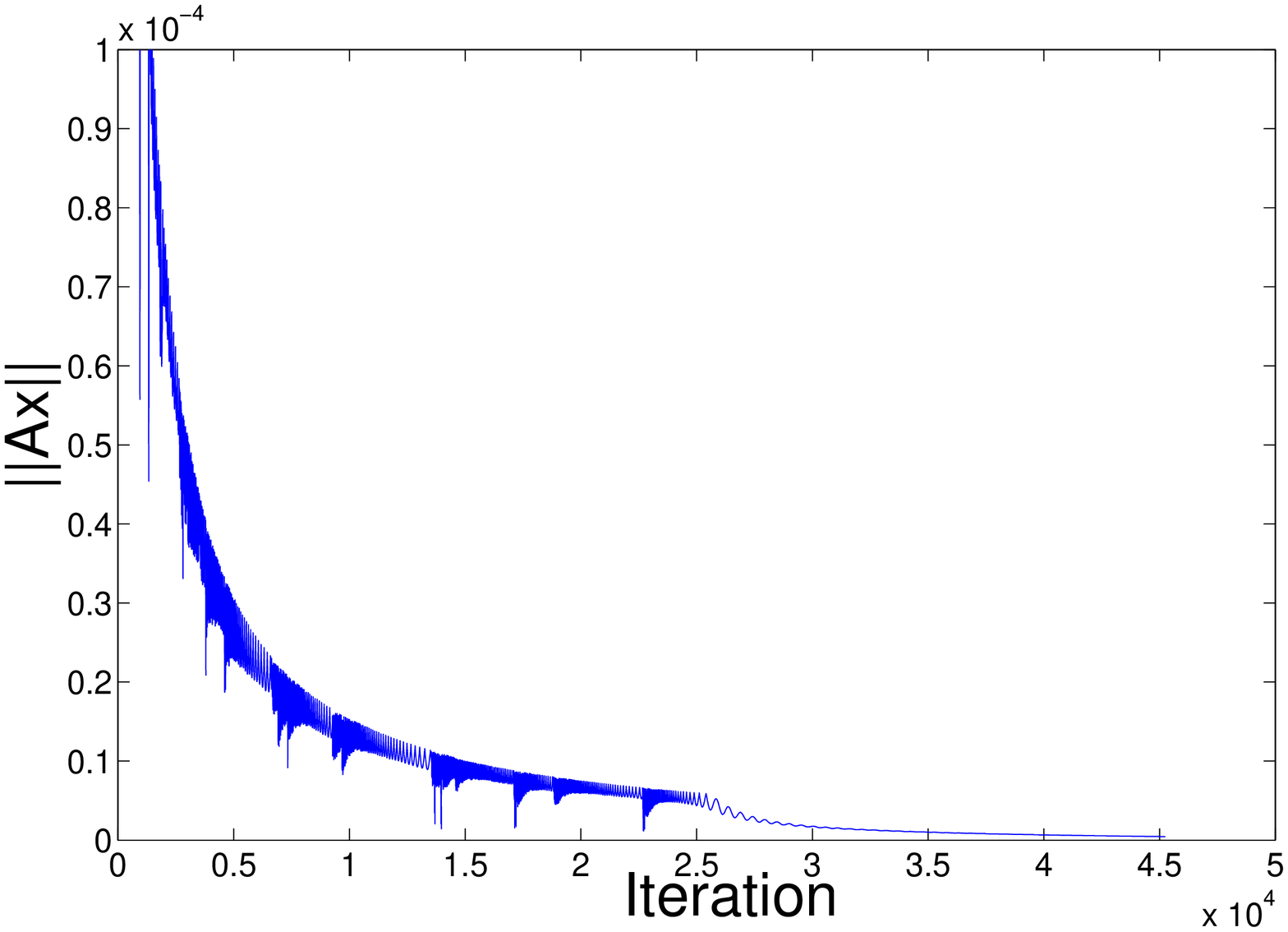}}
\subfigure[]{\label{J25(b)}
\includegraphics[width=2.4in]{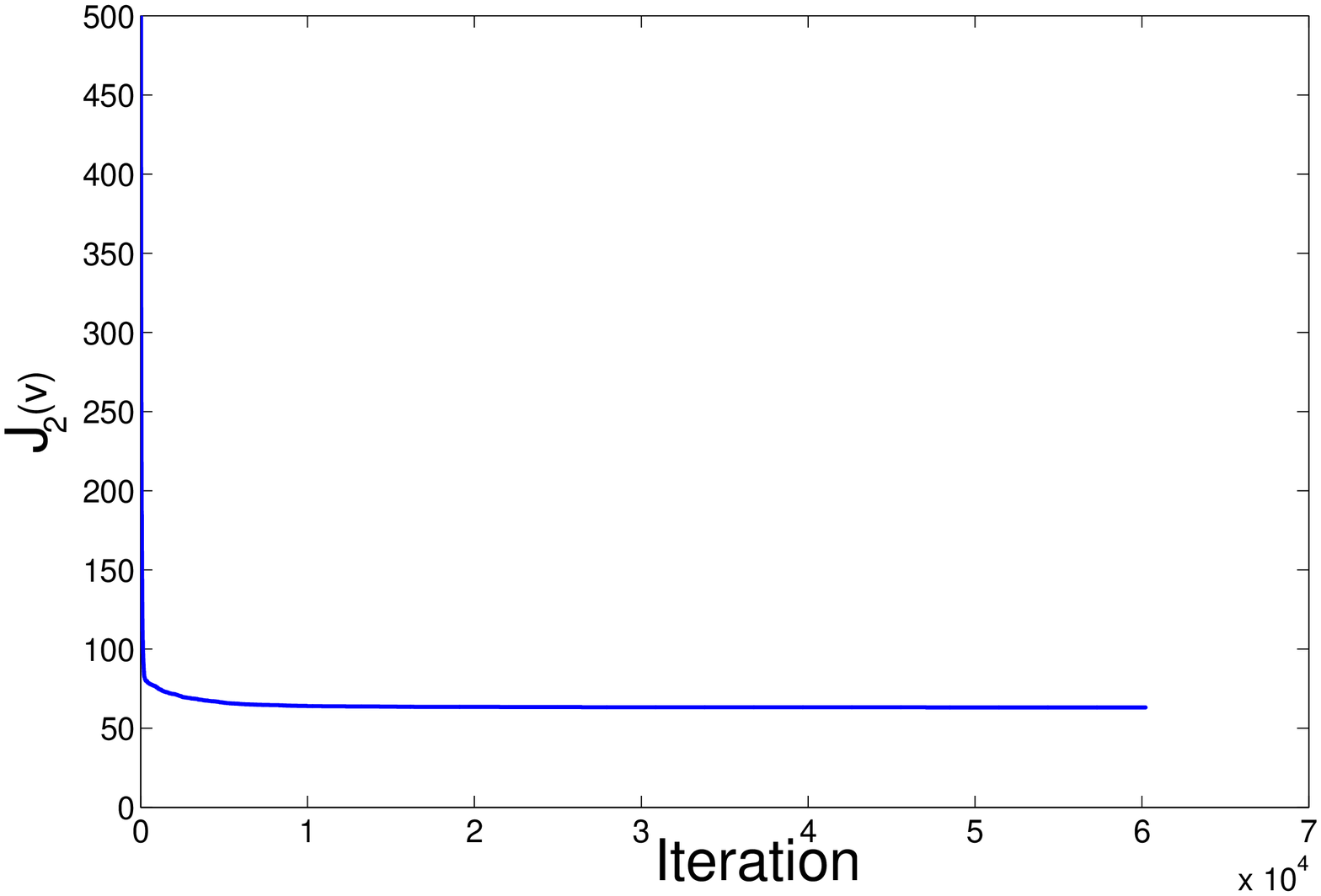}}
\end{center}
\caption{On the left subfigure we show the dynamics of the discrepancy $\| A v_\ell \|$ to the realization of the linear constraint $A u = 0$, and on the right subfigure
the one of the energy $\J_p(v_\ell)$, depending on the iterations $v_\ell$, for $\ell=0,1,2,\dots$}\label{im25}
\end{figure}

For a qualitative evaluation of the behavior of the algorithm, we report below an experiment on a denoising problem for an image of dimensions $125 \times 125$, see Figure \ref{papageno}, where the original image, the noisy version, 
and its denoised version after minimization are reported respectively in the subfigures (a), (b), and (c). The numerical experiments is conducted with $6 \%$ noise, and parameters $\gamma=1.4\times 10^{-1}$, $r=2.8$, and $\varepsilon=3.5 \times 10^{-3}$.

 \begin{remark}\label{remfin}
As mentioned at beginning of this section, our numerical experiments are exclusively aimed at verifying the setting of the parameters and the convergence of the Algorithm \eqref{ncbregman},
with no claim of optimal implementation. However, for the sake of completeness, we mention here how to treat the most demanding numerical issues.
As the algorithm requires the applications of the matrices $D_h^\dagger$ and $(D_h^\dagger)^*$, one may wonder whether such matrices can be efficiently, stably computed and applied. 
In principle, when enough memory is available there is no problem in computing such matrices in advance, also symbolically, and obtaining an iteration at machine precision. If the available memory
is limited, one may avoid to  attempt the explicit computation of such (pseudo)inverses, as it is also a good practice in numerical analysis. Rather one should use a preconditioned
iterative method to approximate the results of their applications, i.e., $D_h^\dagger v$ and $(D_h^\dagger)^*u$. For any matrix $X$ the following identities hold:
\begin{eqnarray*}
 X^* X X^\dagger &=& X^*, \\
X X^* (X^\dagger)^* &=& X.
\end{eqnarray*} 
In case of $X= D_h$ the first identity gives us a method to compute $D_h^\dagger v$, as it is consequently sufficient to solve the linear system
\begin{equation}
\label{divgrad}
\left \{
\begin{array}{ll}  (D_h^* D_h) u = D_h^* v, \\ c({u}) =0,
\end{array}
\right. 
\end{equation}
and set $D_h^\dagger v = u$, where $c(u)=c_{u}$ is the mean value of $u$. 
In order to see that the discrete system \eqref{divgrad} can be efficiently and stably solved we need to highlight its relationships with a corresponding
continuous system. In fact  \eqref{divgrad} actually can be simply interpreted as the discretization of the following continuous partial differential equation, which we write in its weak form
$$
\left \{
\begin{array}{ll}
  \int_\Om \nabla u \cdot \nabla \varphi = \int_\Om v \cdot \nabla \varphi & \\
\int_\Omega u(x) dx = 0&
\end{array}
\right .
$$
for all $\varphi \in H^1(\Om)$. Such an elliptic partial differential equations can be  approached numerically very stably and efficiently by FEM or finite difference discretizations and solved by suitable preconditioned iterations, for instance by means of multigrid methods \cite{hack85}. 
Similarly one can approach the computation of $(D_h^\dagger)^*u$ by defining the system 
\begin{equation}
\label{graddiv}
\left \{
\begin{array}{ll}  (D_h D_h^*) v =  D_h u, \\ A v =0,
\end{array}
\right. 
\end{equation}
and setting $(D_h^\dagger)^* u = v$, where $A$ is again the discrete $\operatorname{curl}$ operator (notice that this matrix is sparse!). The efficient solution of the system \eqref{graddiv} is again subordinated to the use of suitable preconditioners. 
\\
Concerning the overall computational cost one may wonder whether the solution of two systems of (discretized) PDE such as \eqref{divgrad} and \eqref{graddiv} is indeed an exceedingly large amount of effort. To this issue, let us respond that other well-known and established methods for the minimization of Mumford-Shah functional require also the solution of elliptic PDEs, for instance the Ambrosio and Tortorelli approach \cite{amto90}. 
\\
In general, one  can still object that the  operator $T^* T = (D_h^\dagger)^* D_h^\dagger$ as it appears in the iteration \eqref{fixpt3} is likely to be ill conditioned and this might affect negatively the
convergence. However, as it is shown in Theorem \ref{thmthrconv}, as soon as  the operators $T$ and $A$ are properly rescaled and the parameters $\omega$ and $\gamma$ are suitably set, the inner-loop \eqref{fixpt3} is guaranteed
to converge with exponential rate. How the interplay of the spectral properties of the operators $T$ and $A$ may affect the  convergence rate of the outer loop of Algorithm \eqref{ncbregman} is instead likely to be a very difficult problem
to be analyzed, which is general enough to be beyond the scope of this paper.
\end{remark}

\begin{figure}[htp]
\begin{center}
\subfigure[]{ \label{pappa(a)}
\includegraphics[width=2.3in]{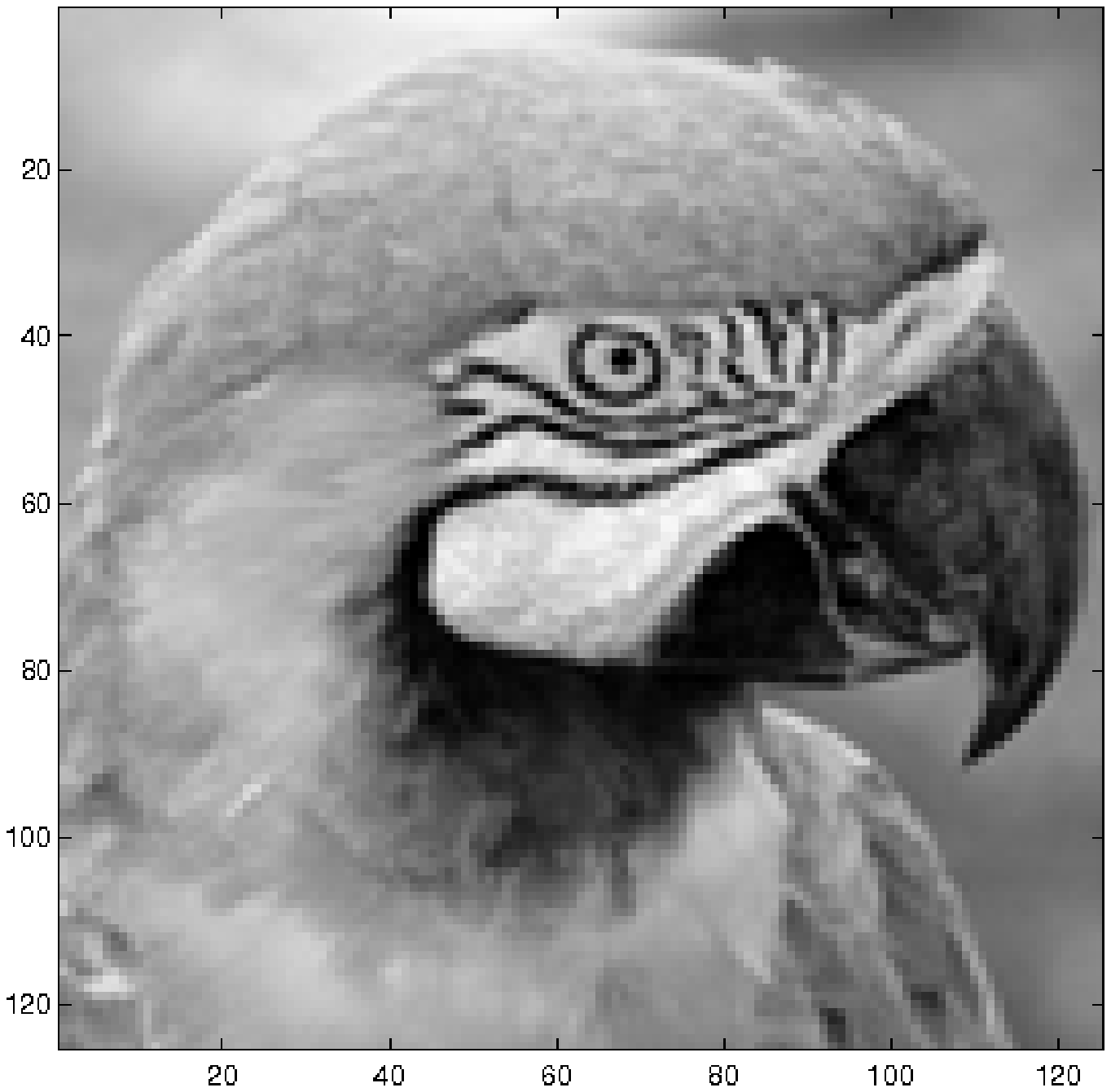}}
\subfigure[]{\label{pappanoise(b)}
\includegraphics[width=2.3in]{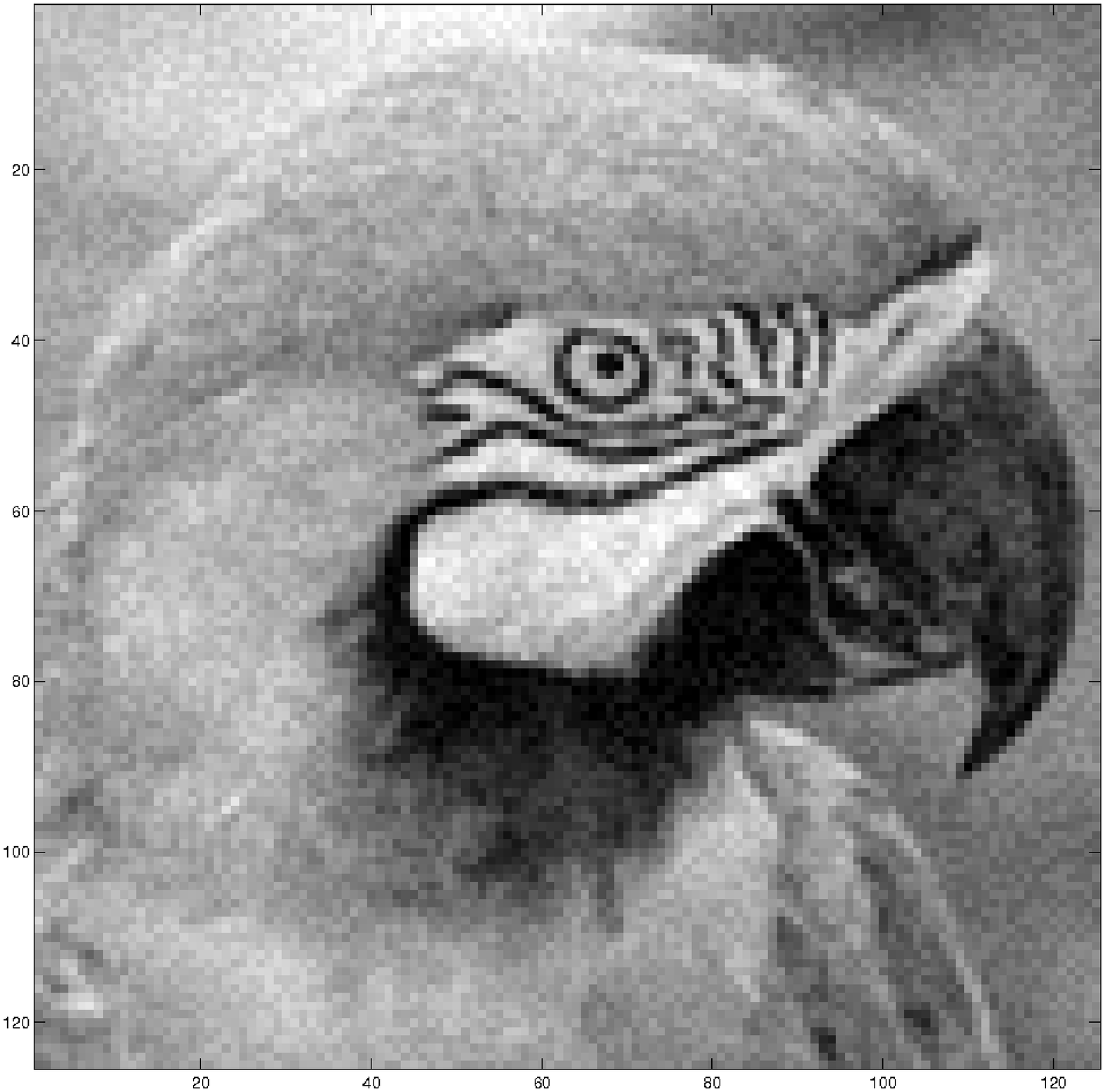}}
\subfigure[]{\label{papparec(c)}
\includegraphics[width=2.3in]{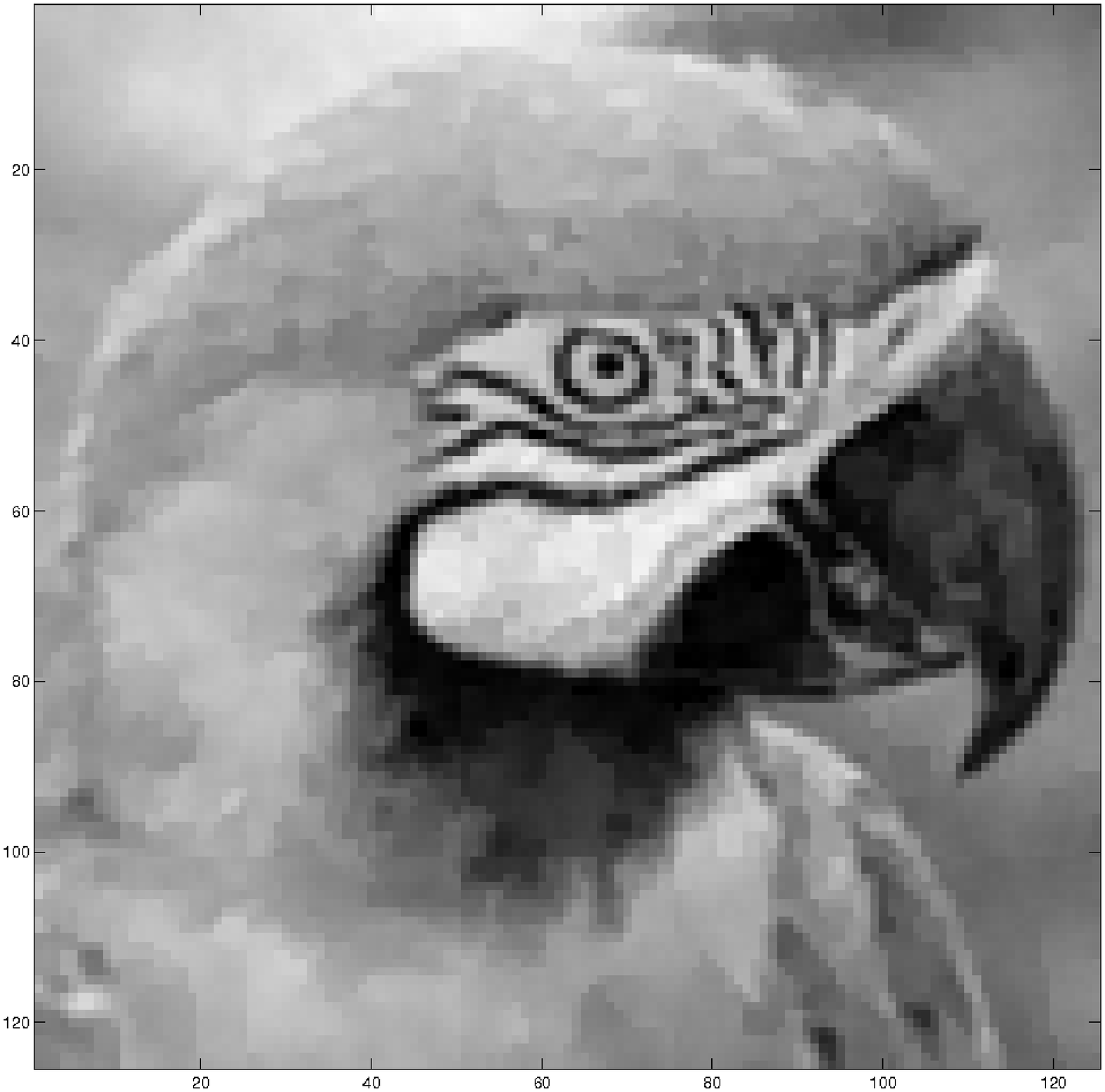}}
\end{center}
\caption{Application of algorithm \eqref{ncbregman} with inner loop realized by iterative thresholding \eqref{fixpt3} for a classical denoising problem.}\label{papageno}
\end{figure}

\subsection{Brittle fracture simulation}

In this subsection we  show the result of the discrete time evolution of the Francfort-Marigo model of brittle fracture in one dimension
as presented in Section \ref{brittle}. Here we assume that $\Omega$ is the interval $[0,1]$ and that the load is applied 
on the boundary $\Omega_D=\{0,1\}$ of the interval. The load corresponds to a displacement $g(t,0)=-t$ and $g(t,1)=t$ at the boundary where $t\geq 0$ is
the time variable. For the minimization of the functional \eqref{FM2d} we again use Algorithm \eqref{ncbregman} with parameters $\gamma = 1$,
$\varepsilon = 10^{-3}$, $r = 2$, and $\omega = \frac{1}{2} \left (\frac{1}{2} + \frac{r}{(N-1)\varepsilon} \right )$, where $N=51$ is the number of space discretization points.
The evolution proceeds with time steps of width $\Delta t = 0.01$. At every new time step, the initial guess for the application of Algorithm \eqref{ncbregman} is the 
state of the gradient of the displacement at the previous time step.
In Figure \ref{francfortmarigosim} (a) we show four stages of the displacement solution $u$ at the time $t=0,\ 0.4,\ 0.8,\ 1.45$. As one can notice the rod
is deformed initially in an elastic way, until the crack happens at multiple positions at time $t=0.9$, being a more favorable critical point of the energy.
In  Figure \ref{francfortmarigosim} (b) we report the evolution of the energy \eqref{discrete2} in time,  where the rupture time is highlighted also by the elastic energy collapse.
\\
As clarified in Section \ref{brittle}, let us again stress that for this model there is no need of computing the action of the pseudoinverse matrix $D_h^\dagger$.  The simulation of the entire evolution until the crack takes few minutes (a few seconds per time iteration) on a standard personal computer with a Matlab implementation. In Figure \ref{tempo} we show the computational time required at each discrete time and we observe how the algorithm needs to 
search longer for the new critical point, as soon as the physical phase transition from elastic evolution to fracture happens.
The numerical results are consistent with the predicted analytical solutions to this well-known model \cite{arfopeXX,frma98}, showing the robustness of Algorithm \eqref{ncbregman}
towards the simulation of physical models.

\begin{figure}[htp]
\begin{center}
\subfigure[]{ \label{displ(a)}
\includegraphics[width=3.5in]{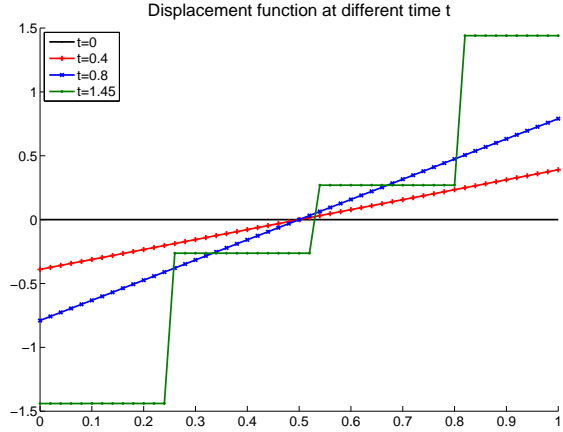}}
\subfigure[]{\label{energy(b)}
\includegraphics[width=3.5in]{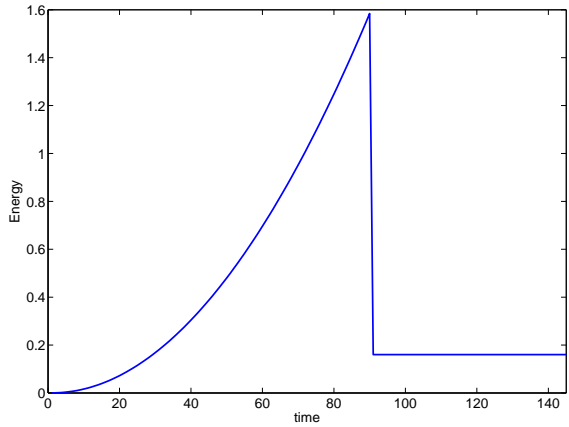}}
\end{center}
\caption{Discrete time evolution of the Francfort-Marigo model of brittle fracture. In the subfigure (a) we show four stages of the rod displacement at different times, starting
with an elastic evolution until crack formation. In the subfigure (b) we show the evolution of the energy \eqref{discrete2}, where the rupture time is highlighted also by the elastic energy collapse.}\label{francfortmarigosim}
\end{figure}

\begin{figure}[htp]
\begin{center}
\includegraphics[width=3.5in]{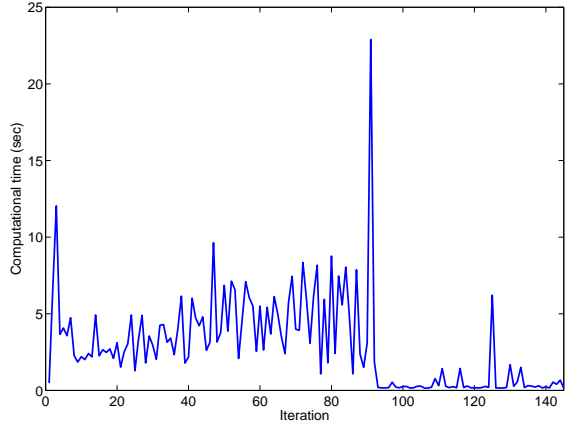}
\end{center}
\caption{Computational time in seconds for each discrete time step.}\label{tempo}
\end{figure}

\section{Appendix}

\subsection{Proof of Theorem \ref{existthm}}
This proof uses a similar approach as for \cite[Theorem 2.3]{fowa10}. Let us first consider a partition $\mathscr P =\{\mathcal U_{\mathcal I_0^j}\}_{j=1}^{2^m}$
of $\EE$ indexed by all subsets $ \mathcal I_0^j \subset \mathcal I$, as follows
$$
\mathcal U_{\mathcal I_0^j} = \{ v \in \EE: |v_i| \leq r+\varepsilon, i \in \mathcal I_0^j, |v_i| >r +\varepsilon, i \in \mathcal I \setminus \mathcal I_0^j \}.
$$
The minimization of $\J_p^\varepsilon$ over $\mathcal F(f) \cap \mathcal U_{\mathcal I_0^j}$ can be reformulated as 
\begin{equation}\label{minfrankwolfe}
\left \{
\begin{array}{l} 
\mbox{Minimize }  \bar \J_p^\varepsilon(v) = \|T v - g \|^2 + \gamma \sum_{i=1}^m c_i \bar W_r^{p,\varepsilon} (v_i), \mbox{ Subject to } v \in \mathcal F(f) \cap \mathcal U_{\mathcal I_0^j},\\
c_i=0 \mbox{ if } i \in  \mathcal I \setminus \mathcal I_0^j \mbox{ and } c_i =1 \mbox { if } i \in \mathcal I_0^j,
\end{array}
\right .
\end{equation}
where
$$
\bar W_r^{p,\varepsilon}(t) = 
\left \{
\begin{array}{ll}
t^p, & t \leq r-\varepsilon,\\
\pi_p(t), & r-\varepsilon \leq t \leq r+\varepsilon,\\
|t-\varepsilon|^p & t \geq r+\varepsilon.
\end{array}
\right .
$$
If we prove that the minimization \eqref{minfrankwolfe} has always a solution $v(\mathcal I_0^j)$ for all $j=1,\dots,2^m$, and such a minimizer belongs to a compact set $M^j$, independent
of $\varepsilon \geq 0$, then
$$
v^* = \arg \min_{j=1,\dots,2^m} \J_p^\varepsilon(v(\mathcal I_0^j)),
$$ 
is actually a solution for \eqref{minprob3} and it belongs to the compact set $M = \cup_{j=1}^{2^m} M^j$, independent
of $\varepsilon \geq 0$. Hence, it is sufficient now to address \eqref{minfrankwolfe}.
For that, we first show the following technical observation:\\

 If $x,v \in \EE$ are fixed and $\bar \J_p^\varepsilon$ is bounded above and below
on the ray $R_{x,v}=\{x + t v, t \geq 0\}$, then $\bar \J_p^\varepsilon$ is actually constant on $R_{x,v}$.
In fact, let us consider the function $\mu(t) = \bar \J_p^\varepsilon(x + t v)$. 
By the boundedness of $\bar \J_p^\varepsilon(x + t v)$, without loss of generality, we can assume that $0 \leq \mu(t) \leq 1$. 
Hence there exists a sequence $(t_n)_n \subset \mathbb R^+$
of points $t_n \to + \infty$ for $n \to \infty$ such that $\mu(t_n) \to \eta \in [0,1]$ for $n \to \infty$. 
Moreover, by definition of $\bar W_r^{p,\varepsilon}$, for $t>0$ sufficiently large we have actually the general expression $\mu(t) = P(t) + \gamma \sum_{i=1}^m c_i | x_i - \varepsilon + t v_i|^p$,
where $P$ is a polynomial of degree at most $2$. Assume now, for instance, that $1 \leq p \leq 2$. As $\lim_n \frac{\mu(t_n)}{t_n^2} =0$ we deduce that all the coefficients in $P$ 
 of second degree are actually vanishing. In turn, then $0 = \lim_n \frac{\mu(t_n)}{|t_n|^p}$ has the implication that for each $i$ one of the coefficients $c_i$ or $d_i$
must vanish as well. Following in the same manner, we conclude that all linear coefficients in $\mu(t)$ also vanish, leaving only the possibility that $\mu(t)$ is a constant function.
A similar approach can be conducted to prove the observation also for $p>2$. \\

Notice now that  $\bar \J_p^\varepsilon$ converges uniformly to $\bar \J_p$ on $\mathcal U_{\mathcal I_0^j}$ for $\varepsilon \to 0$, as defined in \eqref{minprob2}, or
\begin{equation}\label{unifconv}
| \bar \J_p^\varepsilon(v) - \bar \J_p(v) | \leq \Gamma(\varepsilon), \quad \mbox{for all } v \in \mathcal U_{\mathcal I_0^j},
\end{equation}
for a continuous function $\Gamma(\varepsilon)=o(\varepsilon)$, $\varepsilon \to 0$. By Remark \ref{exrem}, for $X=\mathcal F(f) \cap \overline{\mathcal U_{\mathcal I_0^j}}$ and any $v^0 \in X$, there exists a linear subspace $\mathcal V \subset  \EE$, such that the orthogonal projection $X^\perp$ of $X$ onto $\mathcal V^\perp$ has the properties 
\begin{itemize}
\item $X = \{ x =  x^\perp \oplus t v: x^\perp \in X^\perp, v \in \mathcal V, t \in \mathbb R^+\}$,
\item $M_{C}^j = X^\perp  \cap \{ v \in \EE: \bar \J_p(v) \leq C \}$, for $C \geq \bar \J_p^\varepsilon(v^0)+\Gamma(\varepsilon)$ is compact, and 
\item $\bar \J_p (\xi_t)$ is constant along rays $\xi_t=x^\perp \oplus t v $, where $x^\perp \in M_{C}^j$, $v \in  \mathcal V$, and $ t \in \mathbb R^+$. 
\end{itemize}
By the uniform estimate \eqref{unifconv} and the last property, we deduce that $\bar \J_p^\varepsilon(\xi_t)$ is bounded from above and below by $\bar \J_p (x^\perp)\pm \Gamma(\varepsilon)$ on rays $\xi_t=x^\perp \oplus t v $, where $x^\perp \in M_{C}$, $v \in  \mathcal V$, and $ t \in \mathbb R^+$. Hence, we conclude that $\bar \J_p^\varepsilon(\xi_t)$ is also constant for $t\geq 0$. From \eqref{unifconv}, the set
$$
  X^\perp  \cap \{ v \in \EE: \bar \J_p^\varepsilon (v) \leq \bar \J_p^\varepsilon(v^0)\},
$$
is included in $M_{C}^j$, and 
$$
\inf_{v \in \mathcal F(f) \cap \mathcal U_{\mathcal I_0^j}} \bar \J_p^\varepsilon(v) =\inf_{v \in M^j_C} \bar \J_p^\varepsilon(v).
$$
By compactness of $M^j=M^j_C$ and continuity of $\bar \J_p^\varepsilon$ we conclude the existence of minimizers in $M^j$. As pointed out above, this further implies the existence of minimal solutions
in $M = \cup_{j=1}^{2^m} M^j$ of the original problem \eqref{minprob3}. Notice further that, by continuity of $ \bar \J_p^\varepsilon(v^0)+\Gamma(\varepsilon)$ with respect to $\varepsilon$, the sets $M^j=M^j_C$ actually do not depend on $0\leq \varepsilon$ as soon as $C\geq \max_{0 < \varepsilon} \bar \J_p^\varepsilon(v^0)+\Gamma(\varepsilon)$ is large enough.

\bibliography{mf}

\providecommand{\bysame}{\leavevmode\hbox to3em{\hrulefill}\thinspace}
\providecommand{\MR}{\relax\ifhmode\unskip\space\fi MR }
\providecommand{\MRhref}[2]{%
  \href{http://www.ams.org/mathscinet-getitem?mr=#1}{#2}
}
\providecommand{\href}[2]{#2}
\begin{thebibliography}{10}

\bibitem{AFP}
L.~Ambrosio, N.~Fusco, and D.~Pallara, \emph{{Functions of Bounded Variation
  and Free-Discontinuity Problems.}}, {Oxford Mathematical Monographs. Oxford:
  Clarendon Press. xviii}, 2000.

\bibitem{AGS}
L.~Ambrosio, N.~Gigli, and G.~Savar\'e, \emph{{Gradient Flows in Metric Spaces
  and in the Space of Probability Measures. 2nd ed.}}, {Basel: Birkh\"auser},
  2008.

\bibitem{amto90}
{L}. {A}mbrosio and {V}.~{M}. {T}ortorelli, \emph{{Approximation of functionals
  depending on jumps by elliptic functionals via $\Gamma$-convergence.}},
  Commun. Pure Appl. Math. \textbf{43} (1990), no.~8, 999--1036.

\bibitem{arcafosoXX}
M.~Artina, F.~Cagnetti, M.~Fornasier, and F.~Solombrino, \emph{{Discrete rate
  independent evolutions through critical points}}, preprint (2013).

\bibitem{arfopeXX}
M.~Artina, M.~Fornasier, and S.~Peter, \emph{{Damping noise-folding and
  enhanced support recovery in compressed sensing}}, preprint (2013).

\bibitem{ABS}
H.~Attouch, J.~Bolte, and B.F. Svaiter, \emph{Convergence of descent methods
  for semi-algebraic and tame problems: proximal algorithms, forward-backward
  splitting, and regularized gauss-seidel methods}, Mathematical Programming
  \textbf{137} (2013), 91--129.

\bibitem{yefrscXX}
Y.~Au-Yeung, G.~Friesecke, and B.~Schmidt, \emph{Minimizing atomic
  configurations of short range pair potentials in two dimensions:
  crystallization in the {W}ulff shape}, Calc. Var. PDE (to appear).

\bibitem{ber}
D.P. Bertsekas, \emph{{Constrained optimization and Lagrange multiplier
  methods}}, {Academic Press, New York,}, 1982.

\bibitem{BZ}
A.~Blake and A.~Zisserman, \emph{{Visual Reconstruction}}, {MIT Press}, 1987.

\bibitem{blda09}
T.~Blumensath and M.~E. Davies, \emph{{Iterative hard thresholding for
  compressed sensing.}}, Appl. Comput. Harmon. Anal. \textbf{27} (2009), no.~3,
  265--274.

\bibitem{Bdl}
J.~Bolte, A.~Daniilidis, O.~Ley, and L.~Mazet, \emph{{Charachterization of
  {\L}ojasiewicz inequalities. Subgradient flows, talweg, convexity}},
  Trans.Amer.Math.Soc. \textbf{362} (2010), 3319--3363.

\bibitem{bo07}
B.~Bourdin, \emph{{Numerical implementation of the variational formulation for
  quasi-static brittle fracture.}}, Interfaces Free Bound. \textbf{9} (2007),
  no.~3, 411--430.

\bibitem{ca08}
F.~Cagnetti, \emph{{A vanishing viscosity approach to fracture growth in a
  cohesive zone model with prescribed crack path.}}, Math. Models Methods Appl.
  Sci. \textbf{18} (2008), no.~7, 1027--1071 (English).

\bibitem{cato11}
F.~Cagnetti and R.~Toader, \emph{{Quasistatic crack evolution for a cohesive
  zone model with different response to loading and unloading: a Young measures
  approach.}}, ESAIM, Control Optim. Calc. Var. \textbf{17} (2011), no.~1,
  1--27 (English).

\bibitem{carota06}
E.~J. Cand\`es, J.~K. Romberg, and T.~Tao, \emph{{Stable signal recovery from
  incomplete and inaccurate measurements.}}, Commun. Pure Appl. Math.
  \textbf{59} (2006), no.~8, 1207 -- 1223.

\bibitem{chdelelu98}
A.~Chambolle, R.~A. DeVore, N.~Lee, and B.~J. Lucier, \emph{{Nonlinear wavelet
  image processing: variational problems, compression, and noise removal
  through wavelet shrinkage.}}, IEEE Trans. Image Process. \textbf{7} (1998),
  no.~3, 319--335.

\bibitem{Cla}
F.~H. Clarke, \emph{{Optimization and nonsmooth analysis.}}, {New York: John
  Wiley, and sons}, 1983.

\bibitem{cowa05}
P.~L. Combettes and V.~R. Wajs, \emph{{Signal recovery by proximal
  forward-backward splitting.}}, Multiscale Model. Simul. \textbf{4} (2005),
  no.~4, 1168--1200.

\bibitem{DM}
G.~Dal~Maso, \emph{{An Introduction to $\Gamma$-Convergence.}}, {Birkh\"auser,
  Boston}, 1993.

\bibitem{dato02-1}
G.~Dal~Maso and R.~Toader, \emph{{A model for the quasi-static growth of
  brittle fractures based on local minimization.}}, Math. Models Methods Appl.
  Sci. \textbf{12} (2002), no.~12, 1773--1799 (English).

\bibitem{dato02}
\bysame, \emph{{A model for the quasi-static growth of brittle fractures:
  Existence and approximation results.}}, Arch. Ration. Mech. Anal.
  \textbf{162} (2002), no.~2, 101--135 (English).

\bibitem{daro10}
\bysame, \emph{{Quasistatic crack growth in elasto-plastic materials: The
  two-dimensional case.}}, Arch. Ration. Mech. Anal. \textbf{196} (2010),
  no.~3, 867--906 (English).

\bibitem{daza07}
G.~Dal~Maso and C.~Zanini, \emph{{Quasi-static crack growth for a cohesive zone
  model with prescribed crack path.}}, Proc. R. Soc. Edinb., Sect. A, Math.
  \textbf{137} (2007), no.~2, 253--279 (English).

\bibitem{DDD}
I.~Daubechies, M.~Defrise, and C.~De~Mol, \emph{{An iterative thresholding
  algorithm for linear inverse problems with a sparsity constraint.}}, Commun.
  Pure Appl. Math. \textbf{57} (2004), no.~11, 1413--1457.

\bibitem{dg91}
E.~{D}e {G}iorgi, \emph{{Free-discontinuity problems in calculus of
  variations}}, Frontiers in pure and applied mathematics, a collection of
  papers dedicated to J.-L. Lions on the occasion of his $60^{th}$ birthday
  (R.~Dautray, ed.), North Holland, 1991, pp.~55--62.

\bibitem{do06}
D.~L~. Donoho, \emph{Compressed sensing}, IEEE Transactions on Information
  Theory \textbf{52} (2006), no.~4, 1289--1306.

\bibitem{dojo94}
D.~L. Donoho and I.~M. Johnstone, \emph{{Ideal spatial adaptation by wavelet
  shrinkage.}}, Biometrika \textbf{81} (1994), no.~3, 425--455.

\bibitem{duni06}
S.~Durand and M.~Nikolova, \emph{{Stability of the minimizers of least squares
  with a non-convex regularization. I: Local behavior.}}, Appl. Math.
  Optimization \textbf{53} (2006), no.~2, 185--208 (English).

\bibitem{duni06-1}
\bysame, \emph{{Stability of the minimizers of least squares with a non-convex
  regularization. II: Global behavior.}}, Appl. Math. Optimization \textbf{53}
  (2006), no.~3, 259--277 (English).

\bibitem{ET}
I.~Ekeland and R.~Temam, \emph{{Convex Analysis and Variational Problems.
  Translated by Minerva Translations, Ltd., London.}}, {Studies in Mathematics
  and its Applications. Vol. 1. Amsterdam - Oxford: North-Holland Publishing
  Company; New York: American Elsevier Publishing Company, Inc.}, 1976.

\bibitem{fino01}
M.~A.~T. Figueiredo and R.~D. Nowak, \emph{{Wavelet-based image estimation: An
  empirical Bayes approach using Jeffrey's noninformative prior.}}, IEEE Trans.
  Image Process. \textbf{10} (2001), no.~9, 1322--1331.

\bibitem{fora07}
M.~Fornasier and H.~Rauhut, \emph{{Iterative thresholding algorithms}}, Appl.
  Comput. Harmon. Anal. \textbf{25} (2008), no.~2, 187--208.

\bibitem{fowa10}
M.~Fornasier and R.~Ward, \emph{{Iterative thresholding meets
  free-discontinuity problems.}}, Found. Comput. Math. \textbf{10} (2010),
  no.~5, 527--567.

\bibitem{frma98}
G.A. Francfort and J.-J. Marigo, \emph{{Revisiting brittle fracture as an
  energy minimization problem.}}, J. Mech. Phys. Solids \textbf{46} (1998),
  no.~8, 1319--1342.

\bibitem{FrSch}
K.~Frick and O.~Scherzer, \emph{{Regularization of ill-posed linear equations
  by the non-stationary augmented Lagrangian method.}}, J. Integral Equations
  Appl. \textbf{22} (2010), no.~2, 217--257.

\bibitem{GG}
S.~Geman and D.~Geman, \emph{{Stochastic relaxation, Gibbs distributions, and
  the Bayesian restoration of images.}}, IEEE Trans. Pattern Anal. Mach. Intell
  \textbf{6} (1984), 721--741.

\bibitem{gr21}
A.~A. Griffith, \emph{The phenomena of rupture and flow in solids},
  Philosophical Transactions of the Royal Society of London, A \textbf{221}
  (1921), 163--198.

\bibitem{hack85}
Wolfgang Hackbusch, \emph{Multigrid methods and applications}, Springer Series
  in Computational Mathematics, vol.~4, Springer-Verlag, Berlin, 1985.

\bibitem{ItoKun}
K.~Ito and K.~Kunisch, \emph{{Lagrange multiplier approach to variational
  problems and applications.}}, {Philadelphia, PA: Society for Industrial and
  Applied Mathematics (SIAM)}, 2008.

\bibitem{mi02}
A.~Mielke, \emph{{Evolution of rate-independent inelasticity with
  microstructure using relaxation and Young measures.}}, {Dordrecht: Kluwer
  Academic Publishers}, 2003.

\bibitem{MS89}
D.~Mumford and J.~Shah, \emph{Optimal approximation by piecewise smooth
  functions and associated variational problems}, Commun. Pure Appl. Math.
  \textbf{42} (1989), 577--684.

\bibitem{nikolova99}
M.~Nikolova, \emph{Markovian reconstruction using a {GNC} approach}, IEEE
  Transactions on Image Processing \textbf{8} (1999), no.~9, 1204--1220.

\bibitem{nikolova}
\bysame, \emph{Thresholding implied by truncated quadratic regularization},
  IEEE Trans. Signal Process. \textbf{48} (2000), 3437--3450.

\bibitem{niidmo98}
M.~Nikolova, J.~Idier, and A.~Mohammad-Djafari, \emph{{Inversion of
  large-support ill-posed linear operators using a piecewise Gaussian MRF.}},
  IEEE Trans. Image Process. \textbf{7} (1998), no.~4, 571--585.

\bibitem{niko3}
M.~Nikolova, M.~Ng., and C.P. Tam, \emph{Efficient reconstruction of piecewise
  constant images using nonsmooth nonconvex minimization}, IEEE trans. on Image
  Processing \textbf{19} (2010).

\bibitem{niko2}
M.~Nikolova, M.~Ng., S.~Zhang, and W-K. Ching, \emph{Efficient reconstruction
  of piecewise constant images using nonsmooth nonconvex minimization}, SIAM
  journal on Imaging Sciences \textbf{1} (2008), 2--25.

\bibitem{nowr06}
J.~Nocedal and S.~J. Wright, \emph{{Numerical optimization. 2nd ed.}}, {New
  York, NY: Springer}, 2006.

\bibitem{Bre1}
S.~Osher, M.~Burger, D.~Goldfarb, J.~Xu, and W.~Yin, \emph{{An iterative
  regularization method for total variation-based image restoration.}},
  Multiscale Model. Simul. \textbf{4} (2005), no.~2, 460--489.

\bibitem{poltret}
V.~T. Polyak and N.~Y. Tret'yakov, \emph{{The method of penalty estimates for
  conditional extremum problems}}, Z. Vychisl. Mat. I Mat. Fiz. \textbf{13}
  (1973), 34--36.

\end{thebibliography}
\bibliographystyle{plain}

\end{document}